\renewcommand{\setminus}{\smallsetminus}
\newcommand{\ifsodaelse}[2]{\ifthenelse{\isundefined{\SODAF}}{#2}{#1}}
\renewcommand{\theta}{\vartheta}
\newcommand\remove[1]{}
\newcommand{\bwr}{\mathbb{\wr}}
\newcommand{\rnote}[1]{}
\newcommand{\jnote}[1]{}
\renewcommand{\d}{\delta}
\newcommand{\1}{\mathbf{1}}
\newcommand{\e}{\varepsilon}
\newcommand{\R}{\mathbb{R}}
\renewcommand{\P}{\mathscr{P}}
\newcommand{\M}{\mathcal M}
\newcommand{\E}{\mathbb{E}}
\newcommand{\N}{\mathbb{N}}
\renewcommand{\P}{\mathscr{P}}
\newcommand{\Lip}{\mathrm{Lip}}
\newcommand{\Prob}{\mathrm{Prob}}
\newcommand{\f}{\varphi}
\DeclareMathOperator{\diam}{diam} 
\renewcommand{\subset}{\subseteq}
\newtheorem{theorem}{Theorem}[section]
\newtheorem{lemma}[theorem]{Lemma}
\newtheorem{corollary}[theorem]{Corollary}
\newtheorem{remark}{Remark}[section]
\newtheorem{conjecture}{Conjecture}
\newtheorem{question}[conjecture]{Question}
\newcommand{\eqdef}{\stackrel{\mathrm{def}}{=}}
\date{}
\renewcommand{\le}{\leqslant}
\renewcommand{\ge}{\geqslant}
\renewcommand{\leq}{\leqslant}
\newcommand\Z{{{\mathbb Z}}}
\renewcommand{\hat}{\widehat}
\renewcommand{\epsilon}{\varepsilon}
\theoremstyle{remark}
\newcommand{\F}{\mathcal{F}}
\renewcommand{\phi}{\varphi}
\title{An introduction to the Ribe program}
\thanks{Joram Lindenstrauss, who was my Ph.D. advisor, passed away on April 29, 2012. He was an enormously influential mathematician and the founder of the field of research that
is surveyed here. This article is dedicated to his memory.}
\author{Assaf Naor}
\address{Courant Institute, New York University, New York NY 10012}
\email{naor@cims.nyu.edu}
\date{May 7, 2012}
\begin{document}
\maketitle

\tableofcontents

\section{Introduction}

A 1932 theorem of Mazur and Ulam~\cite{MU32} asserts that if $X$ and $Y$ are Banach spaces and $f:X\to Y$ is an {\em onto} isometry then $f$ must be an affine mapping. The assumption that $f(X)=Y$ is needed here, as exhibited by, say, the mapping $t\mapsto (t,\sin t)$ from $\R$ to $(\R^2,\|\cdot\|_\infty)$. However, a major strengthening  of the Mazur-Ulam theorem  due to Figiel~\cite{Fig68} asserts that if $f:X\to Y$ is an isometry and $f(0)=0$ then there is a unique linear operator $T:\overline{\mathrm{span}}(f(X))\to X$ such that $\|T\|=1$ and $T(f(x))=x$ for every $x\in X$. Thus, when viewed as metric spaces in the isometric category, Banach spaces are highly rigid: their linear structure is completely preserved under isometries, and, in fact, isometries between Banach spaces are themselves rigid.

At the opposite extreme to isometries, the richness of Banach spaces collapses if one removes all quantitative considerations by treating them as topological spaces. Specifically, answering a question posed in 1928 by Fr\'echet~\cite{Fre28} and again in 1932 by Banach~\cite{Ban32}, Kadec~\cite{Kad66,Kad67} proved that any two separable infinite dimensional Banach spaces are homeomorphic.  See~\cite{Kad58,BP60,Bes65,And66} for more information on this topic, as well as its treatment in the monographs~\cite{BP75,FHHMPZ01}. An extension of the Kadec theorem to non-separable spaces was obtained by Toru{\'n}czyk in~\cite{Tor81}.

If one only  considers homeomorphisms between Banach spaces that are ``quantitatively continuous" rather than just continuous, then one recovers a rich and subtle category that exhibits deep rigidity results but does not coincide with the linear theory of Banach spaces. We will explain how this suggests that, despite having no a priori link to Banach spaces, general metric spaces have a hidden structure. Using this point of view, insights from Banach space theory can be harnessed to solve problems in seemingly unrelated disciplines, including group theory, algorithms, data structures, Riemannian geometry, harmonic analysis and probability theory. The purpose of this article is to describe a research program that aims to expose this hidden structure of metric spaces, while highlighting some achievements that were obtained over the past five decades as well as challenging problems that remain open.


In order to make the previous paragraph precise one needs to define
the concept of a quantitatively continuous homeomorphisms. While
there are several meaningful and nonequivalent ways to do this,  we
focus here on {\em uniform homeomorphisms}. Given two metric spaces
$({\mathcal M},d_{\mathcal M})$ and $({\mathcal N},d_{\mathcal N})$,
a bijection $f:{\mathcal M}\to {\mathcal N}$ is called a uniform
homeomorphism if both $f$ and $f^{-1}$ are uniformly continuous, or
equivalently if there exist nondecreasing functions
$\alpha,\beta:[0,\infty)\to (0,\infty]$ with $\lim_{t\to
0}\beta(t)=0$ such that $\alpha(d_{\mathcal M}(a,b))\le d_{\mathcal
N}(f(a),f(b))\le \beta(d_{\mathcal M}(a,b))$ for all distinct
$a,b\in {\mathcal M}$.

In the  seminal 1964 paper~\cite{Lin64} Lindenstrauss proved that,
in contrast to the Kadec theorem, there exist many pairs of
separable infinite dimensional Banach spaces, including $L_p(\mu)$
and $L_q(\nu)$ if $p\neq q$ and $\max\{p,q\}\ge 2$, that are not
uniformly homeomorphic. Henkin proved in~\cite{Hen67} that if $n\ge
2$ then $C^k([0,1]^n)$ is not uniformly homeomorphic to $C^1([0,1])$
for all $k\in \N$ (this result was previously announced by
Grothendieck~\cite{Gro56} with some indication of a proof).
Important work of Enflo~\cite{Enf69-smirnov,Enf69,Enf70}, which was
partly motivated by his profound investigation of Hilbert's fifth
problem in infinite dimensions, obtained additional results along
these lines. In particular, in~\cite{Enf69} Enflo completed
Lindenstrauss' work~\cite{Lin64} by proving that that $L_p(\mu)$ and
$L_q(\nu)$ are not uniformly homeomorphic if $p\neq q$ and $p,q\in
[1,2]$, and  in~\cite{Enf70} he proved that a Banach space
$(X,\|\cdot\|_X)$ which is uniformly homeomorphic to a Hilbert space
$(H,\|\cdot\|_H)$ must be isomorphic to $H$, i.e., there exists a
bounded {\em linear} operator $T:X\to H$ such that $\|Tx\|_H\ge
\|x\|_X$ for all $x\in X$. A later deep theorem of Johnson,
Lindenstrauss and Schechthman~\cite{JLS96} makes the same assertion
with Hilbert space replaced by $\ell_p$, $p\in (0,\infty)$, i.e.,
any Banach space that is uniformly homeomorphic to $\ell_p$ must be
isomorphic to $\ell_p$. At the same time, as shown by Aharoni and
Lindenstrauss~\cite{AL78} and Ribe~\cite{Rib84}, there exist pairs
of uniformly homeomorphic Banach spaces that are not isomorphic.

In 1976 Martin Ribe proved~\cite{Rib76} that if two Banach spaces
are uniformly homeomorphic then they have the same finite
dimensional subspaces. To make this statement precise, recall
James'~\cite{Jam72} notion of (crude) finite representability: a
Banach space $(X,\|\cdot\|_X)$ is said to be {\em finitely
representable} in a Banach space $(Y,\|\cdot\|_Y)$ if there exists
$K\in [1,\infty)$ such that for every finite dimensional linear
subspace $F\subseteq X$ there exists a linear operator $T:F\to Y$
satisfying $\|x\|_X\le \|Tx\|_Y\le K\|x\|_X$ for all $x\in F$. For
example, for all $p\in [1,\infty]$ any $L_p(\mu)$ space is finitely
representable in $\ell_p$, and the classical Dvoretzky
theorem~\cite{Dvo60} asserts that Hilbert space is finitely
representable in any infinite dimensional Banach space. If $p,q\in
[1,\infty]$ and $p\neq q$  then at least one of the spaces
$L_p(\mu)$, $L_q(\nu)$ is not finitely representable in the other;
see, e.g.~\cite{Woj91}.

\begin{theorem}[Ribe's rigidity theorem~\cite{Rib78}]\label{thm:ribe}
If $X$ and $Y$ are uniformly homeomorphic Banach spaces then $X$ is finitely representable in $Y$ and $Y$ is finitely representable in $X$.
\end{theorem}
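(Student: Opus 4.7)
The plan is to distill a linear structure out of $f$ by rescaling and passing to an ultralimit, producing a bi-Lipschitz embedding of $X$ into an ultrapower of $Y$, and then linearizing on each finite-dimensional subspace of $X$. After translating so that $f(0)=0$, the first step is to show that $f$ is \emph{asymptotically Lipschitz}: subadditivity of the modulus of uniform continuity, $\omega_f(kt)\le k\omega_f(t)$, combined with finiteness of $\omega_f$ on some interval, yields $\|f(x)-f(y)\|_Y \le \Lambda\|x-y\|_X+\Lambda_0$ for all $x,y\in X$. Applying the same reasoning to $f^{-1}$ and rearranging gives a companion lower bound $\|f(x)-f(y)\|_Y \ge \lambda\|x-y\|_X-\lambda_0$ for some $\lambda,\lambda_0>0$.

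The second step rescales and takes an ultralimit. For $n\in \N$, set $f_n(x):=f(nx)/n$, so that the bounds from Step 1 become
\[
\lambda\|x-y\|_X-\tfrac{\lambda_0}{n}\;\le\;\|f_n(x)-f_n(y)\|_Y\;\le\;\Lambda\|x-y\|_X+\tfrac{\Lambda_0}{n}.
\]
The sequence $(\|f_n(x)\|_Y)_n$ is bounded for each fixed $x$, so fixing a nonprincipal ultrafilter $\U$ on $\N$ one obtains a well-defined map $F:X\to Y_\U$ into the ultrapower by $F(x):=[(f_n(x))_n]_\U$. Passing to the ultralimit in the display above shows that $F$ is a bi-Lipschitz embedding with constants $\lambda$ and $\Lambda$.

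The third step linearizes on an arbitrary finite-dimensional subspace $V\subseteq X$ and invokes the ultrapower characterization of finite representability. Given the bi-Lipschitz $F|_V:V\to Y_\U$, one produces a linear embedding $T:V\to Z$ into some iterated ultrapower $Z$ of $Y$ with distortion at most $\Lambda/\lambda$, via a further differentiation-via-ultralimit step: for suitably chosen base points $v_n\in V$ and scales $s_n\to 0$ set $T(v):=\bigl[\bigl((F(v_n+s_n v)-F(v_n))/s_n\bigr)_n\bigr]_{\U'}$ along a further ultrafilter $\U'$, and arrange linearity by an averaging argument that exploits the finite-dimensionality of $V$ and the uniform bi-Lipschitz bounds. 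By the Heinrich--Henson--Moore theorem, isometric embeddability into an ultrapower of $Y$ is equivalent to finite representability in $Y$, and iterated ultrapowers remain ultrapowers for this purpose, so $V$ is $(\Lambda/\lambda)$-finitely representable in $Y$. Since $V$ was arbitrary, $X$ is finitely representable in $Y$, and the symmetric conclusion for $Y$ follows by applying the same argument to $f^{-1}$.

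The main obstacle is the linearization in the third step: the ultrapower $Y_\U$ typically fails the Radon--Nikodym property, so the usual Rademacher-type differentiation for Lipschitz maps into Banach spaces is unavailable \emph{even} when the domain $V$ is finite-dimensional (already the one-dimensional counterexample $t\mapsto \mathbf{1}_{[0,t]}\in L_1$ shows that Lipschitz maps into a general target need be differentiable nowhere). Extracting a genuinely linear piece of $F|_V$ with controlled distortion therefore requires real work—either a careful selection of base points and scales along an iterated ultrafilter, a Bourgain-style averaging over $V$, or an appeal to a differentiability theorem tailored to this setting—and this is the heart of Ribe's argument.
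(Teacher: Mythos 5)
The paper itself does not prove Theorem~\ref{thm:ribe}; it states it and cites Ribe's original argument~\cite{Rib78}, noting that alternative proofs were given by Heinrich and Mankiewicz~\cite{HM82} and Bourgain~\cite{Bou87}. The route you outline is exactly the Heinrich--Mankiewicz ultrapower approach, and its ingredients are the two that the survey itself names in Section~\ref{sec:bourgain discretization}: the Corson--Klee observation that uniformly continuous maps between Banach spaces are bi-Lipschitz at large distances, followed by a $w^*$-differentiation argument. Your Steps~1 and~2 are correct and standard, and the reduction of finite representability to isometric embeddability in an ultrapower (Stern, Henson--Moore) is also the right tool.

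The genuine gap is in Step~3, and you have located it but not closed it. The ad hoc construction $T(v)=\bigl[\bigl((F(v_n+s_n v)-F(v_n))/s_n\bigr)_n\bigr]_{\U'}$ produces, after passing to the iterated ultrapower, merely another Lipschitz map fixing $0$ with the same bi-Lipschitz constants; nothing forces it to be linear, and your own $L_1$ example shows that iterating blow-ups at random base points need not converge to anything linear. Saying ``arrange linearity by an averaging argument that exploits finite-dimensionality'' is precisely the part that requires a theorem. The Heinrich--Mankiewicz fix is to exploit the fact that although $Y_\U$ fails the Radon--Nikodym property, the second dual $(Y_\U)^{**}$ is a dual space, and a Lipschitz map from a finite-dimensional (indeed any separable) space into a dual space is $w^*$-G\^ateaux differentiable at almost every point. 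Differentiating $F|_V$ in this weak sense at a suitable point gives a genuinely linear map $DF(x_0):V\to (Y_\U)^{**}$; a Fubini-type averaging over directions shows that on a positive-measure set of base points the derivative inherits the lower bi-Lipschitz bound, hence has distortion at most $\Lambda/\lambda$. Finally, the principle of local reflexivity transfers the finite-dimensional image $DF(x_0)(V)\subset (Y_\U)^{**}$ back, with arbitrarily small loss, to $Y_\U$ and thence to $Y$. Without the $w^*$-differentiation theorem and local reflexivity, the linearization step in your proposal does not go through.

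One small inaccuracy to flag: the asymmetry of the conclusion. You write that the companion lower bound $\|f(x)-f(y)\|_Y\ge \lambda\|x-y\|_X-\lambda_0$ follows by ``applying the same reasoning to $f^{-1}$ and rearranging.'' This is fine, but in the final paragraph you say the symmetric conclusion for $Y$ follows by applying the argument to $f^{-1}$ --- you should note that $f^{-1}$ is again a uniform homeomorphism (with $f^{-1}(0)$ possibly nonzero, which is harmless after translating), so both bounds and the whole argument do indeed apply verbatim. This is cosmetic; the substantive issue remains the missing differentiation step.
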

Influential alternative proofs of Ribe's theorem were obtained by
Heinrich and Mankiewicz~\cite{HM82} and Bourgain~\cite{Bou87}. See
also the treatment in the surveys~\cite{Enf76,Ben85} and Chapter 10
of the book~\cite{BL00}.   In~\cite{Rib78} Ribe obtained a stronger
version of Theorem~\ref{thm:ribe} under additional geometric
assumptions on the spaces $X$ and $Y$. The converse to Ribe's
theorem fails, since for $p\in [1,\infty)\setminus\{2\}$ the spaces
$L_p(\R)$ and $\ell_p$ are finitely representable in each other but
not uniformly homeomorphic; for $p=1$ this was proved by
Enflo~\cite{Ben85}, for $p\in (1,2)$ this was proved by
Bourgain~\cite{Bou87}, and for $p\in (2,\infty)$ this was proved by
Gorelik~\cite{Gor94}.

Theorem~\ref{thm:ribe} (informally) says that isomorphic finite
dimensional linear properties of Banach spaces are preserved under
uniform homeomorphisms, and are thus in essence ``metric
properties". For concreteness, suppose that $X$ satisfies the
following property: for every $n\in \N$ and every $x_1,\ldots,x_n\in
X$ the average of $\|\pm x_1\pm x_2\pm\ldots\pm x_n\|_X^2$ over all
the $2^n$ possible choices of signs is at most
$K(\|x_1\|_X^2+\ldots+\|x_n\|_X^2)$, where $K\in (0,\infty)$ may
depend on the geometry of $X$ but not on $n$ and $x_1,\ldots,x_n$.
Ribe's theorem asserts that if $Y$ is uniformly homeomorphic to $X$
then it also has the same property. Rather than giving a formal
definition, the reader should keep properties of this type in mind:
they are ``finite dimensional linear properties" since they are
given by inequalities between lengths of linear combinations of
finitely many vectors, and they are ``isomorphic" in the sense that
they are insensitive to a loss of a constant factor. Ribe's theorem is
thus a remarkable rigidity statement, asserting that uniform
homeomorphisms between Banach spaces cannot alter their finite
dimensional structure.

Ribe's theorem indicates that in principle any isomorphic finite
dimensional linear property of Banach spaces can be equivalently
formulated using only distances between points and making no
reference whatsoever to the linear structure. Recent work of
Ostrovskii~\cite{Ost11,Ost12-forms,Ost12-test} can be viewed as
making this statement formal in a certain abstract sense. The {\em
Ribe program}, as formulated by Bourgain in 1985 (see~\cite{Bou85}
and mainly~\cite{Bou86}), aims to explicitly study this phenomenon.
If parts of the finite dimensional linear theory of Banach spaces
are in fact a ``nonlinear theory in disguise" then if one could
understand how to formulate them using only the metric structure
this would make it possible to study them in the context of general
metric spaces. As a first step in the Ribe program one would want to
discover metric reformulations of key concepts of Banach space
theory. Bourgain's famous metric characterization of when  a Banach
space admits an equivalent uniformly convex norm~\cite{Bou86} was the
first successful completion of a step in this plan. By doing so,
Bourgain kick-started the Ribe program, and this was quickly
followed by efforts of several researchers leading to satisfactory
progress on key steps of the Ribe program.

The Ribe program does not limit itself to reformulating aspects of
Banach space theory using only metric terms. Indeed, this should be
viewed as only a first (usually highly nontrivial) step. Once this
is achieved, one has an explicit ``dictionary" that translates
concepts that a priori made sense only in the presence of linear
structure to the language of general metric spaces. The next
important step in the Ribe program is to investigate the extent to
which Banach space phenomena, after translation using the new
``dictionary", can be proved for general metric spaces. Remarkably,
over the past decades it turned out that this approach is very
successful, and it uncovers structural properties of metric spaces
that have major impact on areas which do not have any a priori link
to Banach space theory. Examples of such successes of the Ribe
program will be described throughout this article.

A further step in the Ribe program is to investigate the role of the
metric reformulations of Banach space concepts, as provided by the
first step of the Ribe program, in metric space geometry. This step
is not limited to metric analogues of Banach space phenomena, but
rather it aims to use the new ``dictionary" to solve problems that
are inherently nonlinear (examples include the use of nonlinear type
in group theory; see Section~\ref{sec:random walks}). Moreover,
given the realization that insights from Banach space theory often
have metric analogues, the Ribe program aims to uncover metric
phenomena that mirror Banach space phenomena but are not strictly
speaking based on metric reformulations of isomorphic finite
dimensional linear properties. For example, Bourgain's embedding
theorem was discovered due to the investigation of a question raised
by Johnson and Lindenstrauss~\cite{JL84} on a metric analogue of
John's theorem~\cite{Joh48}. Another example is the investigation,
as initiated by Bourgain, Figiel and Milman~\cite{BFM86}, of
nonlinear versions of Dvoretzky's theorem~\cite{Dvo60} (in this
context Milman also asked for a nonlinear version of his Quotient of
Subspace Theorem~\cite{Mil85}, a question that is studied
in~\cite{MN04}). Both of the examples above led to the discovery of
theorems on metric spaces that are truly nonlinear and do not have
immediate counterparts in Banach space theory (e.g., the appearance
of ultrametrics in the context of nonlinear Dvoretzky theory; see
Section~\ref{sec:dvo}), and they had major impact on areas such as
approximation algorithms and  data structures. Yet another example
is Ball's nonlinear version~\cite{Bal92} of Maurey's extension
theorem~\cite{Mau74}, based on nonlinear type and cotype (see
Section~\ref{sec:Mtype}). Such developments include some of the most
challenging and influential aspects of the Ribe program. In essence,
Ribe's theorem pointed the way to a certain analogy between linear
and nonlinear metric spaces. One of the main features of the Ribe
program is that this analogy is a source of new meaningful questions
in metric geometry that  probably would not have been raised if it
weren't for the Ribe program.

\begin{remark}\label{rem:rigidity}
{\em A rigidity theorem asserts that a deformation of a certain
object preserves more structure than one might initially expect. In
other words, equivalence in a weak category implies the existence of
an equivalence in a stronger category. Rigidity theorems are
naturally important since they say much about the structure of the
stronger category (i.e., that it is rigid). However, the point of
view of the Ribe program is that a rigidity theorem opens the door
to a new research direction whose goal is to uncover hidden
structures in the weaker category: perhaps the rigidity exhibited by
the stronger category is actually an indication that concepts and
theorems of the stronger category are ``shadows" of a theory that
has a significantly wider range of applicability? This philosophy
has been very successful in the context of the Ribe program, but
similar investigations were also initiated in response to rigidity
theorems in other disciplines. For example, it follows from the
Mostow rigidity theorem~\cite{Mos68} that if two closed hyperbolic
$n$-manifolds ($n>2$) are homotopically equivalent then they are
isometric. This suggests that the volume of a hyperbolic manifold
may be generalized to a homotopy invariant quantity defined for
arbitrary manifolds: an idea that was investigated by Milnor and
Thurston~\cite{MT77} and further developed by Gromov~\cite{Gro82}
(see also Sections 5.34--5.36 and 5.43 in~\cite{Gro07}). These
investigations led to the notion of {\em simplicial volume}, a
purely topological notion associated to a closed oriented manifold
that remarkably coincides with the usual volume in the case of
hyperbolic manifolds. This notion is very helpful for studying
general continuous maps between hyperbolic manifolds.}
\end{remark}


\noindent{\bf Historical note.} Despite the fact that it was first formulated by Bourgain, the
Ribe program is called this way because it is inspired by Ribe's
rigidity theorem. I do not know the exact origin of this name.
In~\cite{Bou86} Bourgain explains the program and its motivation
from Ribe's theorem, describes the basic ``dictionary" that relates
Banach space concepts to metric space concepts, presents examples of
natural steps of the program, raises some open questions, and proves
his metric characterization of isomorphic uniform convexity as the
first successful completion of a step in the program. Bourgain also
writes in~\cite{Bou86} that ``A detailed exposition of this program
will appear in J. Lindenstrauss's forthcoming survey paper [5]."
Reference [5] in~\cite{Bou86} is cited as ``J. Lindenstrauss, {\em
Topics in the geometry of metric spaces}, to appear."  Probably
referring
 to the same unpublished survey,
in~\cite{Bou85} Bourgain also discusses the Ribe program and writes
``We refer the reader to the survey of J. Lindenstrauss [4] for a
detailed exposition of this theme", where reference [4]
of~\cite{Bou85} is ``J. Lindenstrauss, {\em Proceedings Missouri
Conf., Missouri -- Columbia (1984)}, to appear." Unfortunately,
Lindenstrauss' paper was never published.

\bigskip

This article is intended to serve as an introduction to the Ribe
program, targeted at nonspecialists. Aspects of this research
direction have been previously surveyed
in~\cite{Lin70,Ben85,Pis86,Lin98,BL00,MN08,Kal08,MN08-markov}  and
especially in Ball's Bourbaki expos\'e~\cite{Bal12}. While the
material surveyed here has some overlap with these paper, we cover a
substantial amount of additional topics. We also present sketches of
arguments as an indication of the type of challenges that the Ribe
program raises, and we describe examples of applications to areas
which are far from Banach space theory in order to indicate the
versatility of this approach to metric geometry.

\bigskip \noindent {\bf Asymptotic notation.} Throughout this article we will use the notation $\lesssim,\gtrsim$ to denote the corresponding inequalities up to universal constant factors.
We will also denote equivalence up to universal constant factors by
$\asymp$, i.e., $A\asymp B$ is the same as $(A\lesssim B)\wedge
(A\gtrsim B)$.

\bigskip

\noindent{\bf Acknowledgements.} This article accompanies the 10th
Takagi Lectures delivered by the author at RIMS, Kyoto, on May 26
2012. I am grateful to Larry Guth and Manor Mendel for helpful
suggestions. The research presented here is supported in part by NSF
grant CCF-0832795, BSF grant 2010021, and the Packard Foundation.


\section{Metric type}

Fix a Banach space $(X,\|\cdot\|_X)$. By the triangle inequality we have $\|\e_1 x_1+\ldots+\e_n x_n\|_X\le \|x_1\|_X+\ldots+\|x_n\|_X$ for every $x_1,\ldots,x_n\in X$ and every $\e_1,\ldots,\e_n\in \{-1,1\}$. By averaging this inequality over all possible choices of signs $\e_1,\ldots,\e_n\in \{-1,1\}$ we obtain the following {\em randomized triangle inequality}.
\begin{equation}\label{eq:randomized triangle}
\frac{1}{2^n}\sum_{\e_1,\ldots,\e_n\in \{-1,1\}} \left\|\sum_{i=1}^n\e_ix_i\right\|_X\le \sum_{i=1}^n \|x_i\|_X.
\end{equation}
For $p\ge 1$, the Banach space $X$ is said to have {Rademacher type $p$} if there exists a constant $T\in (0,\infty)$ such that for every $n\in \N$ and every $x_1,\ldots,x_n\in X$ we have
\begin{equation}\label{def rad type}
\frac{1}{2^n}\sum_{\e_1,\ldots,\e_n\in \{-1,1\}} \left\|\sum_{i=1}^n\e_ix_i\right\|_X\le T\left(\sum_{i=1}^n \|x_i\|_X^p\right)^{1/p}.
\end{equation}
It is immediate to check (from the case of collinear $x_1,\ldots,x_n$) that if~\eqref{def rad type} holds then necessarily $p\le 2$. If $p>1$ and~\eqref{def rad type} holds then $X$ is said to have nontrivial type. Note that if this happens then in most cases, e.g. if $x_1,\ldots,x_n$ are all unit vectors, \eqref{def rad type} constitutes an asymptotic improvement of the triangle inequality~\eqref{eq:randomized triangle}. For concreteness, we recall that $L_p(\mu)$ has Rademacher type $\min \{p,2\}$.
\begin{remark}\label{rem:kahane}
{\em A classical inequality of Kahane~\cite{Kah64} asserts that for every $q\ge 1$ we have
\begin{equation*}
\left(\frac{1}{2^n}\sum_{\e_1,\ldots,\e_n\in \{-1,1\}} \left\|\sum_{i=1}^n\e_ix_i\right\|_X^q\right)^{1/q}\le \frac{c(p)}{2^n}\sum_{\e_1,\ldots,\e_n\in \{-1,1\}} \left\|\sum_{i=1}^n\e_ix_i\right\|_X,
\end{equation*}
where $c(p)\in (0,\infty)$ depends on $p$ but not on $n$, the choice of vectors $x_1,\ldots,x_n\in X$, and the Banach space $X$ itself. Therefore the property~\eqref{def rad type} is equivalent to the requirement
\begin{equation}\label{eq:q-rad type}
\left(\frac{1}{2^n}\sum_{\e_1,\ldots,\e_n\in \{-1,1\}} \left\|\sum_{i=1}^n\e_ix_i\right\|_X^q\right)^{1/q}\le T\left(\sum_{i=1}^n \|x_i\|_X^p\right)^{1/p},
\end{equation}
with perhaps a different constant $T\in (0,\infty)$. }
\end{remark}

The improved triangle inequality~\eqref{def rad type} is of profound importance to the study of geometric and analytic questions in Banach space theory and harmonic analysis; see~\cite{Mau03} and the references therein for more information on this topic.

The Ribe theorem implies that the property of having type $p$ is preserved under uniform homeomorphism of Banach spaces. According to the philosophy of the Ribe program, the next goal is to reformulate this property while using only distances between points and making no reference whatsoever to the linear structure of $X$. We shall now explain the ideas behind the known results on this step of the Ribe program as an illustrative example of the geometric and analytic challenges that arise when one endeavors to address such questions.

\subsection{Type for metric spaces}\label{sec:metric type} The basic idea, due to Enflo~\cite{Enf69}, and later to Gromov~\cite{Gro83} and Bourgain, Milman and Wolfson~\cite{BMW86}, is as follows. Given $x_1,\ldots,x_n\in X$ define $f:\{-1,1\}^n\to X$ by
\begin{equation}\label{eq:def linear f}
\forall\, \e=(\e_1,\ldots,\e_n)\in \{-1,1\}^n,\quad f(\e)=\sum_{i=1}^n\e_ix_i.
\end{equation}
With this notation, the definition of Rademacher type appearing in~\eqref{def rad type} is the same as the inequality
\begin{multline}\label{type for linear}
\E_\e\left[\|f(\e)-f(-\e)\|_X\right]\\\le T\left(\sum_{i=1}^n \E_\e\left[\left\|f(\e)-f(\e_1,\ldots,\e_{i-1},-\e_i,\e_{i+1},\ldots,\e_n)\right\|_X^p\right]\right)^{1/p},
\end{multline}
where $\E[\cdot]$ denotes expectation with respect to a uniformly random choice of $\e\in \{-1,1\}^n$.

Inequality~\eqref{type for linear} seems to involve only distances between points, except for the crucial fact that the function $f$ itself is the {\em linear} function appearing in~\eqref{eq:def linear f}. Enflo's (bold) idea~\cite{Enf69} (building on his earlier work~\cite{Enf69,Enf70}) is to drop the linearity requirement of $f$ and to demand that~\eqref{type for linear} holds for {\em all} functions $f:\{-1,1\}^n\to X$. Thus, for $p\ge 1$ we say that a {\em metric space} $(\M,d_\M)$ has type $p$ if there exists a constant $T\in (0,\infty)$ such that for every $n\in \N$ and every $f:\{-1,1\}^n\to \M$,
\begin{multline}\label{def:our metric type}
\E_\e\left[d_\M\left(f(\e),f(-\e)\right)\right]\\\le T\left(\sum_{i=1}^n \E_\e\left[d_\M\left(f(\e),f(\e_1,\ldots,\e_{i-1},-\e_i,\e_{i+1},\ldots,\e_n)\right)^p\right]\right)^{1/p}.
\end{multline}
\begin{remark}\label{rem:enf+BMW}
{\em The above definition of type of a metric space is ad hoc: it was chosen here for the sake of simplicity of exposition. While this definition  is sufficient for the description of the key ideas and it is also strong enough for the ensuing geometric applications, it differs from the standard definitions of type for metric spaces that appear in the literature. Specifically, motivated by the fact that Rademacher type $p$ for a Banach space $(X,\|\cdot\|_X)$ is equivalent to ~\eqref{eq:q-rad type} for any $q\ge 1$, combining the above reasoning with the case $q=p$ in~\eqref{eq:q-rad type} leads to Enflo's original definition: say that a metric space $(\M,d_\M)$ has {\em Enflo type} $p$ if if there exists a constant $T\in (0,\infty)$ such that for every $n\in \N$ and every $f:\{-1,1\}^n\to \M$,
\begin{multline}\label{def:enflo type}
\E_\e\left[d_\M\left(f(\e),f(-\e)\right)^p\right]\\\le T^p\sum_{i=1}^n \E_\e\left[d_\M\left(f(\e),f(\e_1,\ldots,\e_{i-1},-\e_i,\e_{i+1},\ldots,\e_n)\right)^p\right].
\end{multline}
Analogously, by~\eqref{eq:q-rad type} with $q=2$ and H\"older's inequality, if  $(X,\|\cdot\|_X)$ has Rademacher type $p\in [1,2]$ then there exists a constant $T\in (0,\infty)$ such that for every $n\in \N$ and every $x_1,\ldots,x_n\in X$,
$$
\E_\e\left[\left\|\sum_{i=1}^n\e_ix_i\right\|_X^2\right]\le T^2 n^{\frac{2}{p}-1}\sum_{i=1}^n \|x_i\|_i^2.
$$
Hence, following the above reasoning, Bourgain, Milman and Wolfson~\cite{BMW86} suggested the following definition of type of metric spaces, which is more convenient than Enflo type for certain purposes: say that a metric space $(\M,d_\M)$ has {\em BMW type} $p$ if if there exists a constant $T\in (0,\infty)$ such that for every $n\in \N$ and every $f:\{-1,1\}^n\to \M$,
\begin{multline}\label{def:BMW type}
\E_\e\left[d_\M\left(f(\e),f(-\e)\right)^2\right]\\\le T^2n^{\frac{2}{p}-1}\sum_{i=1}^n \E_\e\left[d_\M\left(f(\e),f(\e_1,\ldots,\e_{i-1},-\e_i,\e_{i+1},\ldots,\e_n)\right)^2\right].
\end{multline}
In~\cite{Gro83} Gromov suggested the above definitions of type of metric spaces, but only when $p=2$, in which case~\eqref{def:enflo type} and~\eqref{def:BMW type} coincide.
}
\end{remark}

\begin{remark}\label{rem:improved triangle}
{\em For the same reason that Rademacher type $p>1$ should be viewed as an improved (randomized) triangle inequality, i.e., an improvement over~\eqref{eq:randomized triangle}, the above definitions of type of metric spaces should also be viewed as an improved triangle inequality. Indeed, it is straightforward to check that every metric space $(\M,d_\M)$ satisfies
\begin{multline}\label{def:trivial type 1}
\E_\e\left[d_\M\left(f(\e),f(-\e)\right)\right]\\\le \sum_{i=1}^n \E_\e\left[d_\M\left(f(\e),f(\e_1,\ldots,\e_{i-1},-\e_i,\e_{i+1},\ldots,\e_n)\right)\right]
\end{multline}
for every $n\in \N$ and every $f:\{-1,1\}^n\to \M$. Thus every metric space has type $1$ (equivalently Enflo type $1$) with $T=1$. A similar application of the triangle inequality shows that every metric space has BMW type $1$ with $T=1$. Our definition~\eqref{def:our metric type} of type of a metric space $(\M,d_\M)$ is not formally stronger than~\eqref{def:trivial type 1}, and with this in mind one might prefer to consider the following variant of~\eqref{def:our metric type}:
\begin{multline}\label{def:better our metric type}
\E_\e\left[d_\M\left(f(\e),f(-\e)\right)\right]\\\lesssim \E_\e\left[\left(\sum_{i=1}^n d_\M\left(f(\e),f(\e_1,\ldots,\e_{i-1},-\e_i,\e_{i+1},\ldots,\e_n)\right)^p\right)^{1/p}\right].
\end{multline}
Note that, by Jensen's inequality, \eqref{def:better our metric type} implies~\eqref{def:our metric type}. We chose to work with the definition appearing in~\eqref{def:our metric type} only for simplicity of notation and exposition; the argument below will actually yield~\eqref{def:better our metric type}.}
\end{remark}

\subsection{The geometric puzzle} One would be justified to be concerned about the ``leap of faith" that was performed in Section~\ref{sec:metric type}. Indeed, if a Banach space $(X,\|\cdot\|_X)$ satisfies~\eqref{type for linear} for all linear functions as in~\eqref{eq:def linear f} there is no reason to expect that it actually satisfies~\eqref{type for linear} for all $f:\{-1,1\}^n\to X$ whatsoever. Thus, for the discussion in Section~\ref{sec:metric type} to be most meaningful one needs to prove that if a Banach space has Rademacher type $p$ then it also has type $p$ as a metric space (resp. Enflo type $p$ or BMW type $p$). This question, posed in 1976  by Enflo~\cite{Enf76} (for the case of Enflo type), remains open.
\begin{question}[Enflo's problem]\label{Q:enf}
Is it true that if a Banach space has Rademacher type $p$ then it also has Enflo type $p$?
\end{question}

We will present below  an argument that leads to the following slightly weaker fact: if a Banach space has Rademacher type $p$ then for every $\e\in (0,1)$ it also has type $p-\e$ as a metric space. We will follow an elegant argument of Pisier~\cite{Pis86}, who almost solved Enflo's problem by showing that if a Banach space has Rademacher type $p$ then it also has Enflo type $p-\e$ for every $\e\in (0,1)$. Earlier, and via a different argument, Bourgain, Milman and Wolfson proved~\cite{BMW86} that if a Banach space has Rademacher type $p$ then it also has BMW type $p-\e$ for every $\e\in (0,1)$. More recently, \cite{MN07-scaled} gave a different, more complicated (and less useful), definition of type of a metric space, called {\em scaled Enflo type}, and showed that a Banach space has Rademacher type $p$ if and only if it has scaled Enflo type $p$. This completes the Ribe program for Rademacher type, but it leaves much to be understood, as we conjecture that the answer to Question~\ref{Q:enf} is positive. In~\cite{NS02,KN06,NPSS06,HN12} it is proved that the answer to Question~\ref{Q:enf} is positive for certain classes of Banach spaces (including all $L_p(\mu)$ spaces).

To better understand the geometric meaning of the above problems and results consider the following alternative description of the definition of type of a metric space $(\M,d_\M)$. Call a subset of $2^n$ points in $\M$ that is indexed by $\{-1,1\}^n$  a {\em geometric cube} in $\M$. A diagonal of the geometric cube $\{x_\e\}_{\e\in \{-1,1\}^n}\subseteq \M$ is a pair $\{x_\e,x_\delta\}$ where $\e,\delta\in \{-1,1\}^n$ differ in all the coordinates (equiv. $\delta=-\e$).  An edge of this geometric cube is a pair $\{x_\e,x_\delta\}$ where $\e,\delta\in \{-1,1\}^n$ differ in exactly one coordinate. Then~\eqref{def:our metric type} is the following statement
\begin{equation}\label{eq:diagona-edge}
\frac{\sum diagonal}{2^n}\le T \left(\frac{\sum edge^p}{2^n}\right)^{1/p},
\end{equation}
where in the left hand side of~\eqref{eq:diagona-edge} we have the sum of the lengths of all the diagonals of the geometric cube, and in the right hand side of~\eqref{eq:diagona-edge} we have the sum of the $p$th power of the lengths of all the edges of the geometric cube. The assertion that $(\M,d_\M)$ has type $p$ means that~\eqref{eq:diagona-edge} holds for {\em all} geometric cubes in $\M$.

If $(X,\|\cdot\|_X)$ is a Banach space with Rademacher type $p$ then we know that~\eqref{eq:diagona-edge} holds true for all {\em parallelepipeds} in $X$, as depicted in Figure~\ref{fig:parallel}.

\begin{figure}[ht]
\centering \fbox{
\begin{minipage}{4.7in}
\centering
\includegraphics[height=50mm]{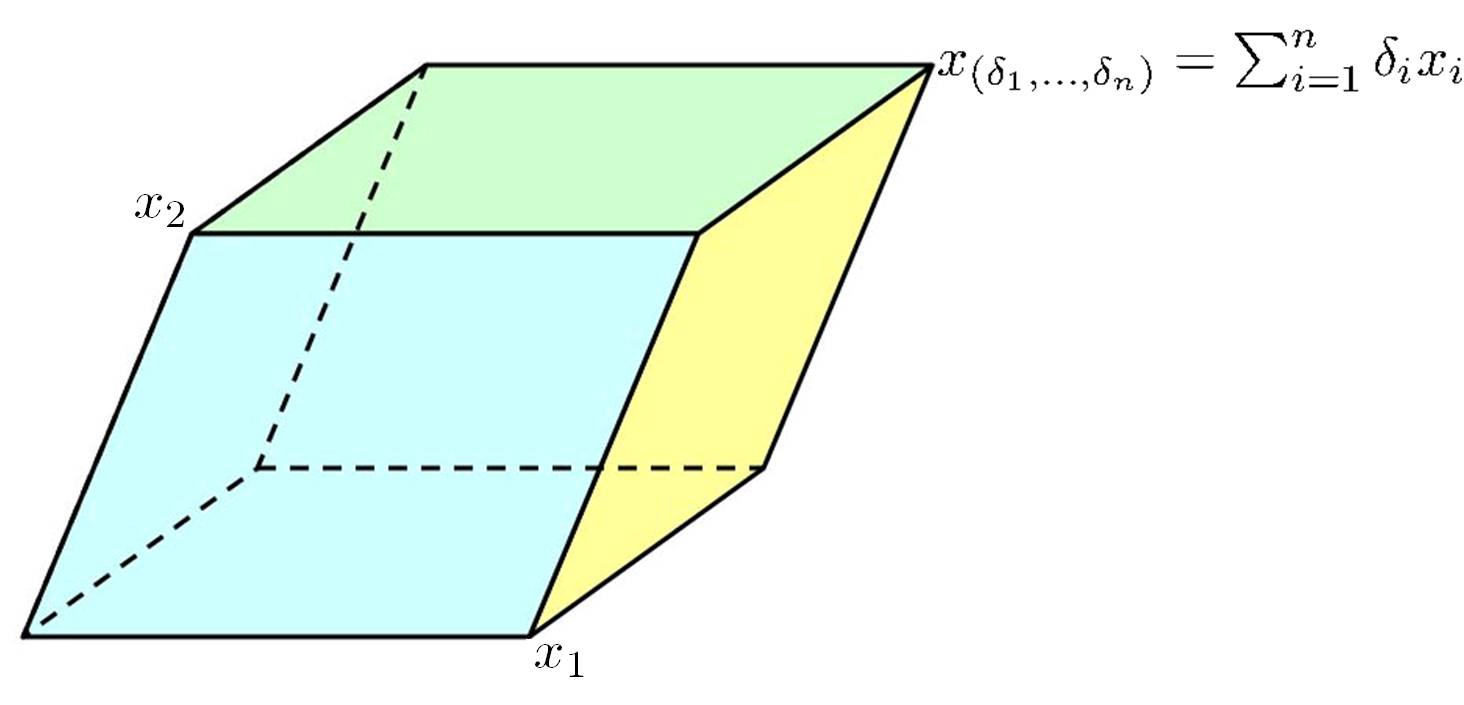}
\caption{$X$ having Rademacher type $p$ is equivalent to the requirement that~\eqref{eq:diagona-edge} holds true for every parallelepiped in $X$, i.e., a set of vectors $\{x_\delta\}_{\delta\in \{0,1\}^n}$ where for some $x_1,\ldots,x_n\in X$ we have $x_{\delta}=\sum_{i=1}^n \delta_ix_i$ for all $\delta=(\delta_1,\ldots,\delta_n)\in \{0,1\}^n$.  }
\label{fig:parallel}
\end{minipage}
}
\end{figure}
The geometric ``puzzle" is therefore to deduce the validity of~\eqref{eq:diagona-edge} for all geometric cubes in $X$ (perhaps with $p$ replaced by $p-\e$) from the assumption that it holds for all parallelepipeds. In other words, given $x_1,\ldots,x_{2^n}\in X$, index these points arbitrarily by $\{-1,1\}^n$. Once this is done, some pairs of these points have been declared as diagonals, and other pairs have been declared as edges, in which case~\eqref{eq:diagona-edge} has to hold true for these pairs; see Figure~\ref{fig:points}.
\begin{figure}[ht]
\centering \fbox{
\begin{minipage}{4.7in}
\centering
\includegraphics[height=60mm]{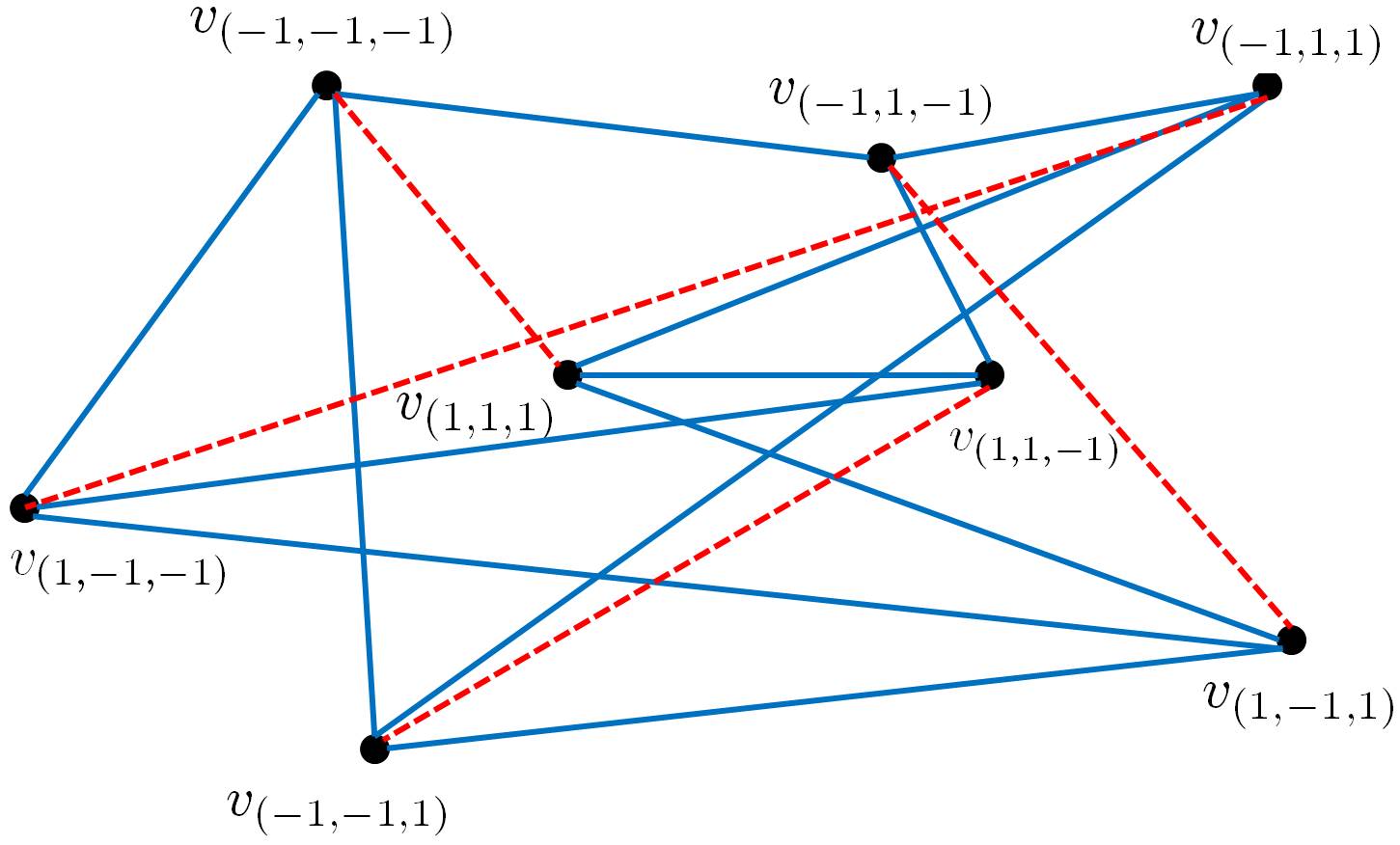}
\caption{A schematic illustration of the problem when $n=3$. Given $x_1,\ldots,x_8\in X$, we index them using the labels $\{(\e_1,\e_2,\e_3):\ \e_1,\e_2,\e_3\in \{-1,1\}\}$ as depicted above. Once this is done, the dotted lines represent diagonals and the full lines represent edges.}
\label{fig:points}
\end{minipage}
}
\end{figure}

\subsection{Pisier's argument} Our goal here is to describe an approach, devised by Pisier in 1986, to deduce metric type from Rademacher type. Before doing so we recall some basic facts related to vector-valued Fourier analysis on $\{-1,1\}^n$. The characters of the group $\{-1,1\}^n$ (equipped with coordinate-wise multiplication) are the Walsh functions $\{W_A\}_{A\subseteq \{1,\ldots,n\}}$, where $
W_A(\e)=\prod_{i\in A} \e_i$.
Fix a Banach space $(X,\|\cdot\|_X)$. Any function $f:\{-1,1\}^n\to X$ has the Fourier expansion
$$
f(\e)=\sum_{A\subseteq \{1,\ldots,n\}} \hat{f}(A)W_A(\e),
$$
where
$$
\hat{f}(A)=\E_\e\left[f(\e)W_A(\e)\right]=\frac{1}{2^n}\sum_{\e=(\e_1,\ldots,\e_n)\in \{-1,1\}^n} f(\e)\prod_{i\in A}\e_i\in X.
$$
For $j\in \{1,\ldots,n\}$ define $\partial_jf:\{-1,1\}^n\to X$ by
\begin{multline}\label{eq:def partial}
\partial_jf(\e)=\frac{f(\e)-f(\e_1,\ldots,\e_{j-1},-\e_j,\e_{j+1},\ldots,\e_n)}{2}\\=\sum_{\substack{A\subseteq \{1,\ldots,n\}\\j\in A}}\hat{f}(A)W_A(\e).
\end{multline}
The hypercube Laplacian of $f$ is given by
$$
\Delta f(\e)=\sum_{j=1}^n\partial_j f(\e)=\sum_{A\subseteq \{1,\ldots,n\}} |A|\hat{f}(A)W_A(\e).
$$
The associated time-$t$   evolute of $f$ under the heat semigroup is
\begin{equation}\label{eq:def heat}
e^{-t\Delta}f(\e)=\sum_{A\subseteq \{1,\ldots,n\}} e^{-t|A|}\hat{f}(A)W_A(\e).
\end{equation}
Since the operator $e^{-t\Delta}$ coincides with convolution with the Riesz kernel $R_t(\e)=\prod_{i=1}^n(1+e^{-t}\e_i)$, which for $t\ge 0$ is the density of a probability measure on $\{-1,1\}^n$, we have by convexity
\begin{equation}\label{eq:heat contracts}
t\ge 0\implies \E_\e\left[\left\|e^{-t\Delta}f(\e)\right\|_X\right]\le \E_\e\left[\|f(\e)\|_X\right].
\end{equation}
It immediately follows from~\eqref{eq:def heat} that
\begin{equation}\label{eq:heat identity}
e^{-t\Delta}\left(W_{\{1,\ldots,n\}}e^{-t\Delta}f\right)=e^{-tn}W_{\{1,\ldots,n\}}f.
\end{equation}
Consequently, we deduce from~\eqref{eq:heat identity} and~\eqref{eq:heat contracts} that
\begin{equation}\label{eq:lower heat}
t\ge 0\implies \E_\e\left[\left\|e^{-t\Delta}f(\e)\right\|_X\right]\ge e^{-nt} \E_\e\left[\|f(\e)\|_X\right].
\end{equation}

Fix $s>0$ that will be determined later. Let $g^*_s:\{-1,1\}^n\to
X^*$ be a  normalizing functional of
$e^{-s\Delta}f-\hat{f}(\emptyset)\in L_1(\{-1,1\}^n,X)$, i.e.,
\begin{equation}\label{eq:g bound}
\forall\, \e \in \{-1,1\}^n,\quad \|g_s^*(\e)\|_{X^*}\le 1,
\end{equation}
and
\begin{multline}\label{eq:g duality identity}
\E_\e\left[\left\|e^{-s\Delta}\left(f(\e)-\hat{f}(\emptyset)\right)\right\|_X\right]=\E_\e\left[g_s^*(\e)\left(e^{-s\Delta}\left(f(\e)-\hat{f}(\emptyset)\right)\right)\right]
\\=\sum_{\substack{A\subseteq \{1,\ldots,n\}\\A\neq\emptyset}}e^{-s|A|}\hat{g_s^*}(A)\left(\hat{f}(A)\right).
\end{multline}

In~\cite{Pis86}, Pisier succeeds to relate general geometric cubes
in $X$ to parallelepipeds in $X$ by interpolating $g_s^*$ between
two hypercubes. Specifically, for every $t\ge 0$ consider the
function
$$
(g_s^*)_t:\{-1,1\}^n\times\{-1,1\}^n\to X^*
$$
given by
\begin{equation}\label{eq:def interpolation}
(g_s^*)_t(\e,\d)=\sum_{A\subseteq \{1,\ldots,n\}} \hat{g_s^*}(A)\prod_{i\in A} \left(e^{-t}\e_i+(1-e^{-t})\d_i\right).
\end{equation}
Equivalently, $(g_s^*)_t(\e,\d)=g_s^*\left(e^{-t}\e+(1-e^{-t})\d\right)$, where we interpret the substitution of the vector $e^{-t}\e+(1-e^{-t})\d\in \R^n$ into the function $g_s^*$, which is  defined a priori only on $\{-1,1\}^n$, by formally substituting this vector into the Fourier expansion of $g_s^*$.

Yet another way to interpret $(g_s^*)_t(\e,\d)$ is to note that for every $A\subseteq \{1,\ldots,n\}$,
\begin{eqnarray}\label{eq:expand prod}
&&\nonumber\!\!\!\!\!\!\!\!\!\!\!\!\!\prod_{i\in A} \left(e^{-t}\e_i+(1-e^{-t})\d_i\right)
=W_A(\e)\prod_{i=1}^n\left(e^{-t}+(1-e^{-t})(\e_i\d_i)^{\1_A(i)}\right)\\\nonumber
&=&W_A(\e)\sum_{B\subseteq \{1,\ldots,n\} } e^{-t|B|}(1-e^{-t})^{n-|B|}W_{A\setminus B}(\e\d)\\&=&
\sum_{B\subseteq \{1,\ldots,n\} } e^{-t|B|}(1-e^{-t})^{n-|B|}W_{A\cap B}(\e)W_{A\setminus B}(\d).
\end{eqnarray}
Hence, by substituting~\eqref{eq:expand prod} into~\eqref{eq:def interpolation}  we have
\begin{multline}\label{eq:for bounding}
(g_s^*)_t(\e,\d)\\=\sum_{B\subseteq \{1,\ldots,n\}} e^{-t|B|}(1-e^{-t})^{n-|B|}g_s^*\left(\sum_{i\in B}\e_ie_i+\sum_{i\in \{1,\ldots,n\}\setminus B} \d_ie_i\right),
\end{multline}
where $e_1,\ldots,e_n$ is the standard basis of $\R^n$. In particular, it follows from~\eqref{eq:g bound} and~\eqref{eq:for bounding} that for every $\e,\d\in \{-1,1\}^n$,
\begin{equation}\label{eq:interpolated bound}
\left\|(g_s^*)_t(\e,\d)\right\|_{X^*}\le \sum_{k=1}^n\binom{n}{k}e^{-kt}(1-e^{-t})^{n-k}=1.
\end{equation}

By directly expanding the products in~\eqref{eq:def interpolation} and collecting the terms that are linear in the variables $(\d_1,\ldots,\d_n)$, we see that
\begin{multline}\label{eq:linear part}
(g_s^*)_t(\e,\d)\\=(e^t-1)\sum_{i=1}^n\d_i\sum_{\substack{A\subseteq\{1,\ldots,n\}\\ i\in A}} e^{-|A|t}\hat{g_s^*}(A)W_{A\setminus \{i\}}(\e)+ \Phi_{s,t}^*(\e,\d),
\end{multline}
where the error term $\Phi_{s,t}^*(\e,\d)\in X^*$ satisfies
\begin{equation}\label{eq:orthogonality error}
\E_\d\left[\Phi_{s,t}^*(\e,\d)\left(\sum_{i=1}^n\d_ix_i\right)\right]=0
\end{equation}
for all $\e\in \{-1,1\}^n$ and all choices of vectors $x_1,\ldots,x_n\in X$. By substituting $x_i=\e_i\partial_if(\e)$ into~\eqref{eq:orthogonality error}, and recalling~\eqref{eq:def partial}, we deduce from~\eqref{eq:linear part} that
\begin{eqnarray}\label{eq:for integration}
&&\!\!\!\!\!\!\!\!\!\!\!\!\!\!\!\!\!\!\!\!\!\nonumber\E_\e\E_\delta\left[(g_s^*)_t(\e,\d)
\left(\sum_{i=1}^n \d_i\e_i\partial_i f(\e)\right)\right]\\&=&\nonumber(e^t-1)
\sum_{i=1}^n\sum_{\substack{A\subseteq \{1,\ldots,n\}\\i\in A}}e^{-t|A|}
\hat{g^*_s}(A)\left(\hat{f}(A)\right)\\&=&(e^t-1)\sum_{A\subseteq\{1,\ldots,n\}}
|A|e^{-t|A|}\hat{g^*_s}(A)\left(\hat{f}(A)\right).
\end{eqnarray}
Recalling~\eqref{eq:interpolated bound} we see that
\begin{equation}\label{eq:use norm bound interpolated}
\E_\e\E_\delta\left[(g_s^*)_t(\e,\d)\left(\sum_{i=1}^n
\d_i\e_i\partial_i f(\e)\right)\right] \le
\E_\e\E_\d\left[\left\|\sum_{i=1}^n \d_i\partial_i
f(\e)\right\|_X\right].
\end{equation}
Hence,
\begin{eqnarray}\label{eq:before using lower heat}
&&\nonumber\!\!\!\!\!\!\!\!\!\!\!\!\!\!\!\!\!\!\!\!\!\!\!\!\!\!\!\!\!\E_\e\left[\left\|e^{-s\Delta}\left(f(\e)-\E_\d\left[f(\d)\right]\right)\right\|_X\right]
\stackrel{\eqref{eq:g duality identity}}{=}\sum_{\substack{A\subseteq\{1,\ldots,n\}\\A\neq\emptyset}}
e^{-s|A|}\hat{g^*_s}(A)\left(\hat{f}(A)\right)\\&=&\nonumber
\int_s^\infty \left(\sum_{A\subseteq\{1,\ldots,n\}}
|A|e^{-t|A|}\hat{g^*_s}(A)\left(\hat{f}(A)\right)\right)dt\\&\stackrel{\eqref{eq:for integration}\wedge\eqref{eq:use norm bound interpolated}}{\le}&\nonumber
\left(\int_s^\infty\frac{dt}{e^t-1}\right)\E_\e\E_\d\left[\left\|\sum_{i=1}^n
\d_i\partial_i f(\e)\right\|_X\right]\\&=&\log\left(\frac{e^s}{e^s-1}\right)\E_\e\E_\d\left[\left\|\sum_{i=1}^n
\d_i\partial_i f(\e)\right\|_X\right].
\end{eqnarray}
Recalling~\eqref{eq:lower heat}, it follows from~\eqref{eq:before
using lower heat} that
\begin{multline}\label{eq:to choose s}
\E_\e\left[\left\|f(\e)-\E_\d[f(\d)]\right\|_X\right]\\\le
e^{ns}\log\left(\frac{e^s}{e^s-1}\right)\E_\e\E_\d\left[\left\|\sum_{i=1}^n
\d_i\partial_i f(\e)\right\|_X\right].
\end{multline}
By choosing $s\asymp \frac{\log\log n}{n\log n}$ so as to minimize the right hand side
of~\eqref{eq:to choose s},
\begin{multline}\label{eq:log log}
\E_\e\left[\left\|f(\e)-\E_\d[f(\d)]\right\|_X\right]\\\le
\left(\log n+O(\log\log n)\right)\E_\e\E_\d\left[\left\|\sum_{i=1}^n
\d_i\partial_i f(\e)\right\|_X\right].
\end{multline}

If $X$ has Rademacher type $p>1$, i.e., it satisfies~\eqref{def rad type},
then
\begin{eqnarray}\label{eq:type used}
&&\nonumber\!\!\!\!\!\!\!\!\!\!\!\!\!\!\!\!\!\!\!\E_\e\left[\left\|f(\e)-f(-\e)\right\|_X\right]\le
2\E_\e\left[\left\|f(\e)-\E_\d[f(\d)]\right\|_X\right]\\\nonumber&\lesssim&
T(\log n)\E_\e\left[\left(\sum_{i=1}^n
\|\partial_if(\e)\|_X^p\right)^{1/p}\right]\\&\lesssim& T(\log
n)\left(\sum_{i=1}^n \E_\e\left[\left\|
f(\e)-f(\e_1,\ldots,-\e_i,\ldots,\e_n)\right\|_X^p\right]\right)^{1/p}.
\end{eqnarray}
This proves that if $X$ has Rademacher type $p$ then it almost has
type $p$ as a metric space: inequality~\eqref{def:our metric type}
holds with an additional logarithmic factor. We have therefore
managed to deduce the fully metric ``diagonal versus edge"
inequality~\eqref{eq:diagona-edge} from the corresponding inequality
for parallelepipeds, though with a (conjecturally) redundant factor
of $\log n$. Using similar ideas, for every $\e\in (0,1)$ one can
also deduce the validity of the Enflo type $p$ condition~\eqref{def:enflo type} without the $\log
n$ term but with $p$ replaced by $p-\e$ and the implied constant
depending on $\e$. See Pisier's paper~\cite{Pis86} for the proof of
this alternative tradeoff. A similar tradeoff was previously proved for BMW type
using a different method by Bourgain, Milman and Wolfson~\cite{BMW86}.

\begin{remark}\label{lem:Pisier's inequality}
{\em An inspection of the above argument reveals that there exists a universal constant $C\in (0,\infty)$ such that for every Banach space $(X,\|\cdot\|_X)$, every $q\in [1,\infty]$, every $n\in \N$, and every $f:\{-1,1\}^n\to X$ we have
\begin{multline}\label{eq:pisier's inequality}
\left(\E_\e\left[\left\|f(\e)-\E_\d[f(\d)]\right\|_X^q\right]\right)^{1/q}\\\le
C(\log n) \left(\E_\e\E_\d\left[\left\|\sum_{i=1}^n
\d_i\partial_i f(\e)\right\|_X^q\right]\right)^{1/q}.
\end{multline}
Inequality~\eqref{eq:pisier's inequality} was proved in 1986 by Pisier~\cite{Pis86}, and is known today as {\em Pisier's inequality}. Removal of the $\log n$ factor from~\eqref{eq:pisier's inequality} for Banach spaces with nontrivial Rademacher type would yield a positive solution Enflo's problem (Question~\ref{Q:enf}). Talagrand proved~\cite{Tal93} that there exist Banach spaces for which the $\log n$ term in~\eqref{eq:pisier's inequality} cannot be removed, but we conjecture that if $(X,\|\cdot\|_X)$ has Rademacher type $p>1$ then the $\log n$ term in~\eqref{eq:pisier's inequality} can be replaced by a universal constant (depending on the geometry of $X$). In~\cite{Tal93} it was shown that the $\log n$ term in~\eqref{eq:pisier's inequality} can be replaced by a universal constant if $X=\R$, and in~\cite{Wag00} it was shown that this is true for a general Banach space $X$ if $q=\infty$. In~\cite{NS02,HN12} it is shown that the $\log n$ term in~\eqref{eq:pisier's inequality} can be replaced by a universal constant for certain classes of Banach spaces that include all $L_p(\mu)$ spaces, $p\in (1,\infty)$.}
\end{remark}

\subsection{Unique obstructions to type}\label{sec:obstruction type} There is an obvious obstruction preventing a Banach space $(X,\|\cdot\|_X)$ from having any Rademacher type $p>1$: if $X$ contains well-isomorphic copies of $\ell_1^n=(\R^n,\|\cdot\|_1)$ for all $n\in \N$ then its Rademacher type must be trivial. Indeed, assume that $(X,\|\cdot\|_X)$ satisfies~\eqref{def rad type} and for $n\in \N$ and $D\in (0,\infty)$ suppose that there exists a linear operator $A:\ell_1^n\to X$ satisfying $\|x\|_1\le \|Ax\|_X\le D\|x\|_1$ for all $x\in \ell_1^n$. Letting $\e_1,\ldots,\e_n$ be the standard basis of $\R^n$, it follows that for $x_i=Ae_i$ we have $\|x_i\|_X\le D$ and $$\forall\, \e\in \{-1,1\}^n,\quad \|\e_1x_i+\ldots+\e_n x_n\|_X=\|A(\e_1e_1+\cdots+\e_ne_n)\|_X\ge n.$$ These facts are in conflict with~\eqref{def rad type}, since they force  the constant $T$ appearing in~\eqref{def rad type} to satisfy
\begin{equation}\label{eq:T lower linear}
T\ge \frac{n^{1-\frac{1}{p}}}{D}.
\end{equation}
Pisier proved~\cite{Pis73} that the well-embeddability of
$\{\ell_1^n\}_{n=1}^\infty$ is the {\em only obstruction} to
nontrivial Rademacher type: a Banach space $(X,\|\cdot\|_X)$ fails
to have nontrivial type if and only if for every $\e\in (0,1)$ and
every $n\in \N$ there exists a linear operator $A:\ell_1^n\to X$
satisfying $\|x\|_1\le \|Ax\|_X\le (1+\e)\|x\|_1$ for all $x\in
\ell_1^n$. In other words, once we know that $X$ does not contain
isomorphic copies of $\{\ell_1^n\}_{n=1}^\infty$ we immediately
deduce that the norm on $X$ must satisfy the asymptotically stronger
randomized triangle inequality~\eqref{def rad type}.

As one of the first examples of the applicability of Banach space insights to general metric spaces, Bourgain, Milman and Wolfson~\cite{BMW86} proved the only obstruction preventing a {\em metric space} $(\M,d_\M)$ from having any BMW type $p>1$ is that $\M$ contains bi-Lipschitz copies of the Hamming cubes $\{(\{-1,1\}^n,\|\cdot\|_1)\}_{n=1}^\infty$.

To make this statement precise it would be useful to recall the following standard notation from bi-Lipschitz embedding theory: given two metric space $(\M,d_\M)$ and $(\mathcal N,d_\mathcal{N})$, denote by
\begin{equation}\label{eq:def distortion}
c_{(\M,d_\M)}(\mathcal{N},d_\mathcal{N})
 \end{equation}
(or $c_\M(\mathcal{N})$ if the metrics are clear from the context) the infimum over those $D\in [1,\infty]$ for which there exists $f:\mathcal{N}\to \M$ and a scaling factor $\lambda\in (0,\infty)$ satisfying
$$
\forall\, x,y\in \mathcal{N},\quad \lambda d_{\mathcal{N}}(x,y)\le d_{\M}(f(x),f(y))\le D\lambda d_{\mathcal{N}}(x,t).
$$
This parameter is called the $\M$ distortion of $\mathcal{N}$. When $\M$ is a Hilbert space, this parameter is called the Euclidean distortion of $\mathcal{N}$.

Suppose that $p>1$ and $(\M,d_\M)$ satisfies any of the type $p$ inequalities~\eqref{def:our metric type}, \eqref{def:enflo type} or~\eqref{def:BMW type} (i.e., our definition of metric type, Enflo type, or BMW type, respectively). If $c_\M(\{-1,1\}^n,\|\cdot\|_1)<D$ then there exists $f:\{-1,1\}^n\to \M$ and $\lambda>0$ such that $$\forall\, \e,\d\in \{-1,1\}^n,\quad \lambda\|\e-\d\|_1\le d_\M(f(\e),f(\d))\le D\lambda \|\e-\d\|_1.$$
It follows that $d_\M(f(\e),f(\e_1,\ldots,\e_{i-1},\e_i,\e_{i+1},\ldots,\e_n)\le 2D\lambda$ for all $\e\in \{-1,1\}^n$ and $i\in \{1,\ldots,n\}$. Also, $d_\M(f(\e),f(-\e))\ge 2n\lambda$ for all $\e\in \{-1,1\}^n$. Hence any one of the nonlinear type conditions~\eqref{def:our metric type}, \eqref{def:enflo type} or~\eqref{def:BMW type} implies that
\begin{equation}\label{eq:lower cube distortion type}
c_\M(\{-1,1\}^n,\|\cdot\|_1)\ge \frac{n^{1-\frac{1}{p}}}{T}.
\end{equation}
Bourgain, Milman and Wolfson proved~\cite{BMW86} (see also the exposition in~\cite{Pis86}) that a metric space $(\M,d_\M)$ fails to satisfy the improved randomized triangle inequality~\eqref{def:BMW type} if and only if $c_\M(\{-1,1\}^n,\|\cdot\|_1)=1$ for all $n\in \N$. It is open whether the same ``unique obstruction" result holds true for Enflo type as well.

We note in passing that it follows from~\eqref{eq:type used} and~\eqref{eq:lower cube distortion type} that if $(X,\|\cdot\|_X)$ is a normed space with type $p>1$ then
\begin{equation}\label{eq:dist cube log}
c_X(\{-1,1\}^n,\|\cdot\|_1)\gtrsim_X \frac{n^{1-\frac{1}{p}}}{\log n},
\end{equation}
where the implied constant may depend on the geometry of $X$ but not
on $n$. In combination with~\eqref{eq:T lower linear}, we deduce
that $c_X(\ell_1^n)$ and $c_X(\{-1,1\}^n)$ have the same asymptotic
order of magnitude, up to a logarithmic term which we conjecture can
be removed. This logarithmic term is indeed not needed if $X$ is an
$L_p(\mu)$ space, as shown by Enflo~\cite{Enf69} for $p\in (1,2]$
and in~\cite{NS02} for $p\in (2,\infty)$ (alternative proofs are
given in~\cite{KN06,NPSS06}). It is tempting to guess that
$(\{-1,1\}^n,\|\cdot\|_1)$ has (up to constant factors) the largest
$\ell_p$ distortion among all subsets of $\ell_1$ of size $2^n$.
This stronger statement remains a challenging open problem; it has
been almost solved (again, up to a logarithmic factor) only for
$p=2$ in~\cite{ALN08}.

\section{Metric cotype}\label{sec:cotype}

The natural ``dual" notion to Rademacher type, called {\em
Rademacher cotype}, arises from reversing the inequalities
in~\eqref{def rad type} or~\eqref{eq:q-rad type} (formally, duality
is a subtle issue in this context; see~\cite{MP76,Pis82}).
Specifically, say that a Banach space $(X,\|\cdot\|_X)$ has
Rademacher cotype $q\in [1,\infty]$ if there exists a constant $C\in
(0,\infty)$ such that for every $n\in \N$ and every
$x_1,\ldots,x_n\in X$ we have
\begin{equation}\label{eq:def rad cotype}
\left(\sum_{i=1}^n\|x_i\|_X^q\right)^{1/q}\le C\E\left[\left\|\sum_{i=1}^n \e_i x_i\right\|_X\right].
\end{equation}
It is simple to check that if~\eqref{eq:def rad cotype} holds then
necessarily $q\in [2,\infty]$, and that every Banach space has
Rademacher cotype $\infty$ (with $C=1$). As in the case of
Rademacher type, the notion of Rademacher cotype is of major
importance to Banach space theory; e.g. it affects the dimension of
almost spherical sections of convex bodies~\cite{FLM77}. For more
information on the notion of Rademacher cotype (including a
historical discussion), see the survey~\cite{Mau03} and the
references therein.

As explained in Remark~\ref{rem:kahane}, Kahane's inequality implies
that the requirement~\eqref{eq:def rad cotype} is equivalent (with a
different constant $C$) to the requirement
\begin{equation}\label{eq:q-def rad cotype}
\sum_{i=1}^n\|x_i\|_X^q\le C^q\E\left[\left\|\sum_{i=1}^n \e_i x_i\right\|_X^q\right].
\end{equation}
For simplicity of notation we will describe below metric variants
of~\eqref{eq:q-def rad cotype}, though the discussion carries over
mutatis mutandis also to the natural analogues of~\eqref{eq:def rad
cotype}.

In Banach spaces it is very meaningful  to reverse the inequality in
the definition of Rademacher type, but in metric spaces reversing
the the inequality in the definition of Enflo type results in a
requirement that no metric space can satisfy unless it consists of a
single point (the same assertion holds true for our definition of
metric type~\eqref{def:our metric type} and BMW type~\eqref{def:BMW
type}, but we will only discuss Enflo type from now on). Indeed,
assume that a metric space $(\M,d_\M)$ satisfies
\begin{multline}\label{def:rule out enflo cotype}
\sum_{i=1}^n
\E_\e\left[d_\M\left(f(\e),f(\e_1,\ldots,\e_{i-1},-\e_i,\e_{i+1},\ldots,\e_n)\right)^q\right]\\\le
C^q\E_\e\left[d_\M\left(f(\e),f(-\e)\right)^q\right].
\end{multline}
For all $f:\{-1,1\}^n\to X$.  If $\M$ contains two distinct point
$x_0,y_0$ then apply~\eqref{def:rule out enflo cotype} to a function
$f:\{-1,1\}^n\to \{x_0,y_0\}$ chosen uniformly at random from the
$2^{2^n}$ possible functions of this type. The right hand side
of~\eqref{def:rule out enflo cotype} will always be bounded by
$C^qd_\M(x_0,y_0)^q$, while the expectation over the random function
$f$ of the left hand side of~\eqref{def:rule out enflo cotype} is
$\frac{n}{2}d_\M(x_0,y_0)^q$. Thus necessarily $C\gtrsim n^{1/q}$.

In~\cite{MN08} the following definition of {\em metric cotype} was
introduced. A metric space $(\M,d_\M)$ has metric cotype $q$ if
there exists a constant $C\in (0,\infty)$ such that for every $n\in
\N$ there exists an even integer $m\in\N$ such that  every
$f:\Z_m^n\to \M$ satisfies
\begin{multline}\label{eq:def metric cotype}
\sum_{j=1}^n\sum_{x\in \Z_m^n}
d_\M\left(f\left(x+\frac{m}{2}e_j\right),f(x)\right)^q\\\le
\frac{(Cm)^q}{3^n}\sum_{\e\in \{-1,0,1\}^n}\sum_{x\in \Z_m^n}
d_\M(f(x+\e),f(x))^q.
\end{multline}
Here $e_1,\ldots,e_n$ are the standard basis of the discrete torus
$\Z_m^n$ and addition is performed modulo $m$. The average over
$\e\in \{-1,0,1\}^n$ on the right hand side of~\eqref{eq:def metric
cotype} is natural here, as it corresponds to the $\ell_\infty$
edges of the discrete torus.

It turns out that it is possible to complete the step of the Ribe
program corresponding to Rademacher cotype via the above definition
of metric cotype. Specifically,  the following theorem was proved
in~\cite{MN08}.
\begin{theorem}\label{thm:cotype equiv}
A Banach space $(X,\|\cdot\|_X)$ has Rademacher cotype $q$ if and
only if it has metric cotype $q$.
\end{theorem}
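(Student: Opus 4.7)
The plan is to prove the two implications of Theorem~\ref{thm:cotype equiv} separately: the reverse direction (metric cotype $\Rightarrow$ Rademacher cotype) by plugging a well-chosen test function into~\eqref{eq:def metric cotype}, and the forward direction (Rademacher cotype $\Rightarrow$ metric cotype) by Fourier analysis on $\Z_m^n$ combined with a K-convexity style linearization.

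For the reverse direction, given $v_1,\ldots,v_n\in X$, I would define $f:\Z_m^n\to X$ by $f(x)=\sum_{i=1}^n g(x_i)\,v_i$, where $g:\Z_m\to\R$ is a triangular wave satisfying $g(x+m/2)=-g(x)$ and $|g(x+1)-g(x)|=1$ for all $x\in\Z_m$. Then $f(x+(m/2)e_j)-f(x)=-2g(x_j)v_j$, so averaging over $x\in\Z_m^n$ reduces the left-hand side of~\eqref{eq:def metric cotype} to a positive constant (depending only on $g$) times $m^n\sum_j\|v_j\|_X^q$. On the right-hand side, $f(x+\e)-f(x)=\sum_i(g(x_i+\e_i)-g(x_i))v_i$ is a signed linear combination whose signs behave, after averaging over $x\in\Z_m^n$ and $\e\in\{-1,0,1\}^n$, like scaled Rademacher variables acting on the $v_i$; collecting constants yields the linear cotype inequality~\eqref{eq:q-def rad cotype}.

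For the forward direction, I would expand $f:\Z_m^n\to X$ as $f(x)=\sum_{k\in\Z_m^n}\hat f(k)\,\omega^{k\cdot x}$ with $\omega=e^{2\pi i/m}$. The identity $f(x+(m/2)e_j)-f(x)=\sum_k\hat f(k)\bigl((-1)^{k_j}-1\bigr)\omega^{k\cdot x}$ shows that only frequencies with $k_j$ odd contribute on the left, so after Plancherel the left side of~\eqref{eq:def metric cotype} is controlled by a sum over $k$ of $\|\hat f(k)\|_X^q$ weighted by $\#\{j:k_j\text{ odd}\}$. Averaging the right-hand side over $\e\in\{-1,0,1\}^n$ produces the product Fourier multiplier $\prod_i\tfrac{1+2\cos(2\pi k_i/m)}{3}$, which for $m$ a sufficiently large even integer (depending on $n$ and on the cotype constant $C$) is bounded away from its value at the ``odd'' frequencies. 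A coefficient-by-coefficient comparison then reduces the metric inequality to a vector-valued Rademacher sum, to which the linear cotype hypothesis applies.

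The main obstacle is this last linearization: replacing the average over $\Z_m^n$ by a genuine Rademacher average on $\{-1,1\}^n$ is the content of Pisier's K-convex projection, which requires $X$ to have nontrivial type. Since finite Rademacher cotype does not imply nontrivial type (e.g.\ $L_1$ has cotype $2$ but trivial type), one cannot invoke K-convexity directly. To get around this I would (i) apply heat-semigroup smoothing on $\Z_m^n$ to concentrate the Fourier mass of $f$ on low-frequency coefficients where the kernel comparison is unconditional, and (ii) control the error introduced by this smoothing using the trivial triangle inequality on the remaining high frequencies. The quantitatively delicate point, which determines the proof's sharpness, is choosing $m=m(n,q,C)$ large enough to execute this smoothing while keeping $m$ finite for every $n$, as required by the definition~\eqref{eq:def metric cotype} of metric cotype.
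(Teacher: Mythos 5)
The paper states Theorem~\ref{thm:cotype equiv} with a reference to~\cite{MN08} and does not reproduce the proof, so I am comparing your plan against that source.

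Your reverse direction (metric cotype $\Rightarrow$ Rademacher cotype) via a single test function $f(x)=\sum_j g(x_j)v_j$ with $g$ an odd-around-$m/2$ wave is in the right spirit and close to what is done in~\cite{MN08}; the arithmetic of constants (your ``$m^n\sum_j\|v_j\|_X^q$'' should pick up an extra factor of order $m^q$ from $\sum_y|g(y)|^q$, which is then absorbed by $(Cm)^q$ on the right) needs tightening but there is no obstruction.

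The forward direction, as you have written it, has a genuine gap. The reduction ``after Plancherel the left side is controlled by $\sum_k\|\hat f(k)\|_X^q$'' followed by ``a coefficient-by-coefficient comparison'' is only available when $X$ is a Hilbert space and $q=2$. For a general Banach space $X$ there is no Plancherel theorem for $L^q(\Z_m^n;X)$ and, more to the point, no unconditionality of the Fourier basis: one cannot compare $\E_x\|f(x)\|_X^q$ with any expression built from $\|\hat f(k)\|_X$ alone, and one cannot transfer norm estimates Fourier coefficient by Fourier coefficient. (If such a comparison were available it would already imply strong structural assumptions on $X$.) Independently of the Plancherel issue, the multiplier $\prod_i\tfrac{1+2\cos(2\pi k_i/m)}{3}$ is \emph{not} bounded away from its odd-frequency values: a single coordinate with $k_j$ odd and near $m/2$ makes the corresponding factor close to $-1/3$, and for generic $k$ with many nonzero coordinates the product decays exponentially in that number, so the proposed pointwise multiplier comparison fails even in the scalar case.

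You correctly identify the key difficulty in your last paragraph — $K$-convexity is unavailable because cotype $q<\infty$ does not give nontrivial type — but the fix you sketch (heat-semigroup smoothing plus Plancherel plus coefficient comparison) does not escape the Fourier-coefficient trap. The actual argument in~\cite{MN08} avoids any spectral decomposition of the $X$-valued norm; the smoothing is implemented through averaging operators that are convolutions with probability kernels, so the comparison between the two sides of~\eqref{eq:def metric cotype} proceeds via convexity and the triangle inequality (which hold in every Banach space and every $L^q$) rather than via Fourier coefficients. The role of nontrivial type in this circle of ideas is confined to the quantitative refinement recorded here as Theorem~\ref{thm:K convex bound}, where one seeks $m\lesssim n^{1/q}$ and a Pisier-type $K$-convexity projection is genuinely used; the basic equivalence of Theorem~\ref{thm:cotype equiv} must be, and in~\cite{MN08} is, established without it.
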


The definition of metric cotype stipulates that for every $n\in \N$
there exists an even integer $m\in \N$ for which~\eqref{eq:def
metric cotype} holds true, but for certain applications it is
important to have good bounds on $m$. The argument that was used
above to rule out~\eqref{def:rule out enflo cotype} shows that if
$(\M,d_\M)$ contains at least two points then the validity
of~\eqref{eq:def metric cotype} implies that $m\gtrsim n^{1/q}$.
In~\cite{MN08} it was proved that one can ensure that $m$ has this
order of magnitude if $X$ is Banach space with nontrivial Rademacher
type.

\begin{theorem}\label{thm:K convex bound}
Let $(X,\|\cdot\|_X)$ be a Banach space with Rademacher cotype
$q<\infty$ and Rademacher type $p>1$. Then~\eqref{eq:def metric
cotype} holds true for some even integer $m\le \kappa n^{1/q}$,
where $\kappa\in (0,\infty)$ depends on the geometry of $X$ but not
on $n$.
\end{theorem}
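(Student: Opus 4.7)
The plan is to combine Rademacher cotype of $X$ with Pisier's K-convexity theorem, which guarantees that for a Banach space of nontrivial type $p>1$ the $X$-valued Rademacher projection on $L_q(\mu;X)$ has bounded norm $K(X)<\infty$. All the analysis is carried out via Fourier expansion on the discrete torus $\Z_m^n$, whose characters are $\chi_k(x)=\omega_m^{k\cdot x}$ with $\omega_m=e^{2\pi i/m}$ and $k\in \Z_m^n$. Writing $f(x)=\sum_k \hat f(k)\chi_k(x)$ (after passing to the complexification), the ``antipodal'' differences $g_j(x)\eqdef f(x+\tfrac{m}{2}e_j)-f(x)$ have Fourier multiplier $((-1)^{k_j}-1)$, hence are supported on frequencies $k$ with $k_j$ odd, while a local increment $f(x+\eta)-f(x)$ with $\eta\in\{-1,0,1\}^n$ has multiplier $\chi_k(\eta)-1$, which is of size $O(|k|/m)$ at low frequencies.

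First I would apply Rademacher cotype of $X$ in the $L_q(\Z_m^n;X)$ sense to the $n$ vectors $g_1,\ldots,g_n$, obtaining
\begin{equation*}
\sum_{j=1}^n\int_{\Z_m^n}\|g_j(x)\|_X^q\,dx\le C^q\int_{\Z_m^n}\E_\e\Bigl\|\sum_{j=1}^n\e_jg_j(x)\Bigr\|_X^q\,dx.
\end{equation*}
The left-hand side is precisely the left-hand side of~\eqref{eq:def metric cotype} (up to normalization), so it suffices to bound the right-hand side by the right-hand side of~\eqref{eq:def metric cotype}. This reduces the theorem to a single Fourier multiplier estimate in $L_q(\Z_m^n;X)$: one must bound the operator that sends the family of local increments $\{f(\cdot+\eta)-f(\cdot)\}_{\eta\in\{-1,0,1\}^n}$ to the random combination $\sum_j\e_j g_j$, uniformly in $f$.

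The main obstacle, and the step where the hypothesis of nontrivial type is genuinely needed, is exactly this multiplier bound. At frequency $k$ the multiplier of $\sum_j\e_jg_j$ is $-2\sum_{j:\,k_j\text{ odd}}\e_j$, which is $O(1)$ for a typical $\e$, whereas the local multipliers $\chi_k(\eta)-1$ are only of size $O(|k|/m)$ for small frequencies, so a naive comparison costs a factor of $m$ per coordinate. K-convexity overcomes this: before comparing multipliers one applies the Rademacher (``linear'') projection to the $X$-valued Fourier expansion of $f$, which restricts attention to the first Walsh level in each coordinate at the cost of only the constant $K(X)$, with no dependence on $n$ or $m$. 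After this projection the antipodal multiplier $(-1)^{k_j}-1=-2$ on a first-level coordinate can be expressed as a combination of the local multipliers using a fixed (dimension-free) kernel on $\{-1,0,1\}$, which is the Fourier-analytic heart of the argument. Balancing the resulting estimate against the cotype gain on the left and the averaging over $3^n$ local directions on the right forces the choice $m\asymp n^{1/q}$. Without the K-convexity hypothesis one would be forced to use~\eqref{eq:pisier's inequality} in place of the Rademacher projection, incurring an extra $\log n$ factor and only yielding $m\gtrsim n^{1/q}\log n$; it is precisely nontrivial type that removes this logarithm and gives the sharp $m\le \kappa n^{1/q}$.
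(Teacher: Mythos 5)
The paper states this theorem without proof, deferring to~\cite{MN08}, so the comparison is against the argument there. At the level of strategy your outline does match that argument: expand on the discrete torus $\Z_m^n$, apply Rademacher cotype in $L_q(\Z_m^n;X)$ to the antipodal differences $g_j(x)=f(x+\tfrac{m}{2}e_j)-f(x)$ (legitimate, since cotype $q$ passes to $L_q(\mu;X)$ for $q<\infty$), thereby reducing the theorem to a bound on $\E_\e\bigl\|\sum_j\e_j g_j\bigr\|_{L_q(\Z_m^n;X)}$ by $m$ times the averaged $\ell_\infty^n$-increments, and obtain this bound from a Fourier multiplier comparison in which K-convexity (Pisier's theorem, equivalent to type $p>1$) supplies the uniform constant. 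The multiplier arithmetic you cite -- that $g_j$ carries the multiplier $(-1)^{k_j}-1$ supported on odd $k_j$, while $f(\cdot+\eta)-f$ carries $\chi_k(\eta)-1$ which is $O(|k|/m)$ at low frequency -- is correct, and is ultimately what produces the factor $m^q$ on the right-hand side and the balance $m\asymp n^{1/q}$ once the cotype gain on the left is taken into account.

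The genuine gap is in the step where K-convexity is invoked. On $\{-1,1\}^n$ the characters are graded by Walsh level and ``Rademacher projection'' has an unambiguous meaning; on $\Z_m^n$ the dual group is again $\Z_m^n$ and there is no Walsh grading, so the operation you describe -- projecting onto ``the first Walsh level in each coordinate'' -- does not exist as stated, and it is not a consequence of K-convexity that any ad hoc analogue of it is bounded on $L_q(\Z_m^n;X)$ uniformly in $m$ and $n$. The argument in~\cite{MN08} has to manufacture a setting in which the $\{-1,1\}$-Rademacher projection genuinely applies: one considers auxiliary functions on a product with $\{-1,1\}^n$, together with a convolution-type smoothing on $\Z_m^n$, and only after this transference does the multiplier comparison go through at bounded cost. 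Your assertion that the antipodal multiplier is ``expressed by a fixed dimension-free kernel on $\{-1,0,1\}$'' is the right kind of statement to aim for, but as written it is unsupported, and building the structure that makes it true is precisely the content of the theorem. A further sign that the picture is incomplete is your prediction that dropping K-convexity costs only a factor of $\log n$ by reverting to~\eqref{eq:pisier's inequality}: the paper records that the best bound known without nontrivial type is the much weaker $m\lesssim n^{1+1/q}$, indicating that K-convexity enters in a way that is structurally different from merely removing a logarithm.
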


As an example of an application of Theorem~\ref{thm:K convex bound},
the following characterization of the values of $p,q\in [1,\infty)$
for which $L_p[0,1]$ is uniformly homeomorphic to a subset of
$L_q[0,1]$ was obtained in~\cite{MN08}, answering a question posed
by Enflo~\cite{Enf76} in 1976.

\begin{theorem}\label{thm:LpLq}
Fix $p,q\in [1,\infty)$. Then $L_p[0,1]$ is uniformly homeomorphic
to a subset of $L_q[0,1]$ if and only if either $p\le q$ or $p,q\in
[1,2]$.
\end{theorem}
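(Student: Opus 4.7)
The plan is to treat the two directions separately and to focus the bulk of the effort on the interesting \emph{only if} direction. For the \emph{if} direction, the case $p \le q$ is handled by standard (uniform, often Lipschitz) embeddings of $L_p[0,1]$ into $L_q[0,1]$ built from Mazur-type maps between unit spheres combined with Gaussian and scaling tricks, while the remaining case $p,q \in [1,2]$ with $p > q$ is furnished by the classical isometric linear embedding of $L_p[0,1]$ into $L_q[0,1]$ via $p$-stable random variables (Bretagnolle--Dacunha-Castelle).

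For the \emph{only if} direction, the hypothesis ``$p > q$ and not both in $[1,2]$'' reduces to $p > q$ together with $p > 2$. Suppose for contradiction that $f : L_p[0,1] \to L_q[0,1]$ is a uniform embedding with moduli $\alpha,\beta$, and (after a standard Corson--Klee style adjustment, valid because the domain is a Banach space) assume $\beta(t) \le At + B$ for all $t \ge 0$. Set $s := \max(q,2)$; then $L_q$ has sharp Rademacher cotype $s$ and, since $p > 2$ and $p > q$, one has $s < p$. Theorem~\ref{thm:cotype equiv} gives $L_q$ metric cotype $s$, and Theorem~\ref{thm:K convex bound} (applicable when $q > 1$, which guarantees nontrivial type for $L_q$) lets us choose the even integer $m$ in~\eqref{eq:def metric cotype} to satisfy $m \asymp n^{1/s}$. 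Now let $e_1,\ldots,e_n$ be disjointly supported norm-one functions in $L_p[0,1]$, fix $\phi : \Z_m \to \R$ with $|\phi(k+m/2) - \phi(k)| \gtrsim 1$ on a constant fraction of $k \in \Z_m$ and $|\phi(k+1) - \phi(k)| \lesssim 1/m$ everywhere (e.g.\ $\phi(k) = \sin(\pi k/m)$), choose a parameter $R > 0$, and define $g: \Z_m^n \to L_p[0,1]$ by $g(x) := R \sum_{j=1}^n \phi(x_j) e_j$. A short computation gives $\|g(x+\tfrac{m}{2}e_j) - g(x)\|_p \gtrsim R$ on a constant fraction of the pairs $(j,x)$, while $\|g(x+\e)-g(x)\|_p \lesssim R\,n^{1/p}/m$ uniformly over $\e \in \{-1,0,1\}^n$ and $x \in \Z_m^n$. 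Feeding $h := f \circ g$ into~\eqref{eq:def metric cotype}, bounding the diagonal from below via $\alpha$ and the edges from above via $\beta$, and simplifying one arrives at
\begin{equation*}
n^{1/s}\,\alpha(R) \;\lesssim\; m\,\beta\!\left(R\, n^{1/p}/m\right) \;\asymp\; n^{1/s}\,\beta\!\left(R\, n^{1/p - 1/s}\right).
\end{equation*}
Since $1/p - 1/s < 0$, the argument of $\beta$ on the right tends to $0$ as $n \to \infty$, so uniform continuity of $f$ forces the right-hand side to $0$, contradicting the fixed positive quantity $\alpha(R) > 0$.

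The central obstacle is that Theorem~\ref{thm:K convex bound}, not just Theorem~\ref{thm:cotype equiv}, is indispensable: it is precisely the sharp bound $m \asymp n^{1/s}$ that produces the crucial factor $n^{1/p - 1/s}$ with negative exponent, allowing the multiplicative $m$ in front of $\beta$ to be absorbed into the $n^{1/s}$ on the left; without this upper bound on $m$ the decay of $\beta$ could be overwhelmed. A second delicate point is the case $q = 1$, where $L_1$ has trivial Rademacher type and Theorem~\ref{thm:K convex bound} does not apply as stated; this borderline case requires either a specialized derivation of the same $m \asymp n^{1/2}$ bound for $L_1$-valued metric cotype, or a circumvention via an auxiliary space. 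Modulo these points, the argument cleanly instantiates the Ribe-program philosophy: the cotype of $L_q$ is transferred via its purely metric reformulation along any hypothetical uniform embedding to $L_p$, but the sharp cotype $p$ of $L_p$ refuses to be lowered to $s < p$.
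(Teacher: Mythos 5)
Your proposal reconstructs, essentially correctly, the argument from \cite{MN08} that the paper alludes to: the paper itself gives no proof of Theorem~\ref{thm:LpLq} and only indicates that it follows from Theorem~\ref{thm:K convex bound} via the principle that metric cotype with $m\lesssim n^{1/q}$ is a uniform-embedding obstruction, and that is precisely the mechanism you run. The reduction to $p>q$, $p>2$, the role of the disjointly supported basis in $L_p$, the use of Theorem~\ref{thm:cotype equiv} together with the sharp bound $m\lesssim n^{1/s}$ from Theorem~\ref{thm:K convex bound}, and the final estimate $\alpha(2R)\lesssim\beta\bigl(CRn^{1/p-1/s}\bigr)\to 0$ are all in order. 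Two small points deserve correction or amplification. First, your concrete $\phi(k)=\sin(\pi k/m)$ is not $m$-periodic, so it does not define a map on $\Z_m$, and consequently $g$ is not well defined on $\Z_m^n$; the standard fix is to take $\phi(k)=e^{2\pi i k/m}$ (equivalently $\phi(k)=(\cos(2\pi k/m),\sin(2\pi k/m))\in\R^2$, absorbing the extra two real dimensions per coordinate into the $L_p$ space), which gives $\|g(x+\tfrac m2 e_j)-g(x)\|_p=2R$ for \emph{all} $x,j$ and $\|g(x+\e)-g(x)\|_p\le 2R\sin(\pi/m)\,n^{1/p}\lesssim Rn^{1/p}/m$ uniformly. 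Second, the $q=1$ case you rightly flag has the clean circumvention you propose as your second option: the ``if'' direction already gives a uniform embedding $L_1\hookrightarrow L_2$, so a hypothetical uniform embedding $L_p\hookrightarrow L_1$ with $p>2$ would compose to $L_p\hookrightarrow L_2$, reducing to $q=2$ where $L_2$ has nontrivial type and Theorem~\ref{thm:K convex bound} applies directly. Finally, the Corson--Klee step you invoke is harmless but not actually needed: the argument of $\beta$ tends to $0$ as $n\to\infty$ with $R$ fixed, so you only ever use $\beta$ near the origin, where uniform continuity alone controls it.
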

An analogous result was proved for coarse embeddings in~\cite{MN08}
and for quasisymmetric embeddings in~\cite{Nao06}, answering a question posed by
V\"ais\"al\"a~\cite{Vai99}. The link between Theorem~\ref{thm:K
convex bound} and these results is that one can argue that if
$(\M,d_\M)$ satisfies~\eqref{eq:def metric cotype} with $m\lesssim
n^{1/q}$ then any Banach space that embeds into $\M$ in one of these
senses inherits the cotype of $\M$. Thus, metric cotype (with
appropriate dependence of $m$ on $n$) is an obstruction to a variety
of weak notions of metric embeddings. The following natural open
question is of major importance.
\begin{question}\label{Q:m on n}
Is it possible to obtain the conclusion of Theorem~\ref{thm:K convex
bound} without the assumption that $X$ has nontrivial Rademacher
type? In other words, is it true that any Banach space
$(X,\|\cdot\|_X)$ with Rademacher cotype $q<\infty$
satisfies~\eqref{eq:def metric cotype} with $m\lesssim_X n^{1/q}$?
\end{question}
We conjecture that the answer to Question~\ref{Q:m on n} is
positive, in which case metric cotype itself, without additional
assumptions, would be an invariant for uniform, coarse and
quasisymmetric embeddings. For a general Banach space
$(X,\|\cdot\|_X)$ of Rademacher cotype $q$ the best known bound on
$m$ in terms of $n$ in~\eqref{eq:def metric cotype}, due
to~\cite{GMN11}, is $m\lesssim n^{1+1/q}$.

There are additional applications of metric cotype for which the
dependence of $m$ on $n$ in~\eqref{eq:def metric cotype} has no
importance. In analogy to the discussion in
Section~\ref{sec:obstruction type}, it was proved by Maurey and
Pisier~\cite{MP76} that the only obstruction that can prevent a
Banach space $(X,\|\cdot\|_X)$ from having finite Rademacher cotype
is the presence of well-isomorphic copies of
$\{\ell_\infty^n\}_{n=1}^\infty$. In~\cite{MN08} a variant of the
definition of metric cotype was given, in analogy to the
Bourgain-Milman-Wolfson variant of Enflo type, and it was shown that
a {\em metric space} has finite metric cotype in this sense if and
only if $c_\M(\{1,\ldots,m\}^n,\|\cdot\|_\infty)=1$ for every
$m,n\in \N$. This nonlinear Maurey-Pisier theorem was used
in~\cite{MN08} to prove the following dichotomy result for general
metric spaces, answering a question posed by Arora, Lov\'asz,
Newman, Rabani, Rabinovich and Vempala~\cite{ALNRRV06} and improving
a Ramsey-theoretical result of Matou\v{s}ek~\cite{Mat92}.

\begin{theorem}[General metric dichotomy~\cite{MN08}]\label{thm:dich
MN} Let $\F$ be a family of metric spaces. Then one of the following
dichotomic possibilities must hold true.
\begin{itemize}
\item For every finite metric space $(\M,d_\M)$ and for every $\e\in
(0,\infty)$ there exists $\mathcal{N}\in \F$ such that
$$c_{\mathcal{N}}(\M)\le 1+\e.$$
\item There exists $\alpha(\F),\kappa(\F)\in (0,\infty)$
and for each $n\in \N$ there exists an $n$-point metric space
$(\M_n,d_{\M_n})$ such that for every $\mathcal{N}\in \F$ we
have
$$
c_{\mathcal{N}}(\M_n)\ge \kappa(\F)(\log n)^{\alpha(\F)}.
$$
\end{itemize}
\end{theorem}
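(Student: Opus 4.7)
The plan is to derive alternative (b) from the negation of alternative (a) by applying the nonlinear Maurey--Pisier theorem for metric cotype from~\cite{MN08}, upgrading it to a uniform quantitative statement across the family $\F$, and then using the metric cotype inequality~\eqref{eq:def metric cotype} on a sequence of scaled discrete tori to produce the polylogarithmic distortion lower bound.

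First I would reduce the failure of (a) to an $\ell_\infty$-grid obstruction. Suppose (a) fails: there exist a fixed finite metric space $(\M_*, d_{\M_*})$ and $\e_* > 0$ with $c_\mathcal{N}(\M_*) \geq 1+\e_*$ for every $\mathcal{N}\in \F$. By the Fr\'echet embedding, $\M_*$ embeds isometrically into $(\{1,\ldots,m_*\}^{n_*},\|\cdot\|_\infty)$ for integers $m_*, n_*$ depending only on $\M_*$. Any embedding of this grid into $\mathcal{N}$ restricts to an embedding of $\M_*$ of no larger distortion, so
$$
\inf_{\mathcal{N}\in \F} c_{\mathcal{N}}\bigl(\{1,\ldots,m_*\}^{n_*},\|\cdot\|_\infty\bigr) \geq 1+\e_*.
$$
No $\mathcal{N}\in \F$ therefore contains all $\ell_\infty$ grids with distortion $1$, and the nonlinear Maurey--Pisier theorem of~\cite{MN08} then guarantees that every $\mathcal{N}\in\F$ has finite metric cotype in the sense of~\eqref{eq:def metric cotype}.

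Second, I would upgrade this qualitative statement to a uniform quantitative one: there exist $q = q(\F)<\infty$ and $C = C(\F)<\infty$ such that every $\mathcal{N}\in \F$ satisfies~\eqref{eq:def metric cotype} with these parameters. Granted such uniformity, I would finish by taking the test spaces to be the scaled discrete tori $\M_n := (\Z_{m(k)}^{k}, \|\cdot\|_\infty)$, where $k = k(n)$ is chosen so that $m(k)^{k}\asymp n$ and $m(k)$ is the integer provided by the uniform metric cotype for dimension $k$. Plugging an optimal bi-Lipschitz embedding $f:\M_n\to\mathcal{N}$ of distortion $D:=c_\mathcal{N}(\M_n)$ into~\eqref{eq:def metric cotype}, and bounding the ``diagonal'' terms on the left from below by $(m(k)/(2D))^q$ and the ``edge'' terms on the right from above by $D^q$, gives the standard lower bound $D\gtrsim k^{1/q}/C$. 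Provided $m(k)$ grows at most polynomially in $k$ (as is the case for any metric space with finite metric cotype), this rearranges to $D\gtrsim_\F (\log n)^{\alpha(\F)}$ for some $\alpha(\F)>0$, yielding alternative (b).

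The hard part will be the second step: extracting uniform quantitative cotype constants from the qualitative Maurey--Pisier characterization. A priori, the cotype exponent and constant could depend on the individual $\mathcal{N}$, and passage to uniform constants depending only on the failure data $(m_*, n_*, \e_*)$ demands a careful bookkeeping of quantitative dependencies through the proof of the nonlinear Maurey--Pisier theorem, combined with a compactness argument on the space of $(m_*^{n_*})$-point metric spaces modulo isometric rescaling. Once this uniformity is in place, the remaining substitutions into~\eqref{eq:def metric cotype} are routine and produce the asserted polylogarithmic lower bound.
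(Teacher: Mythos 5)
Your strategy---reduce the failure of alternative (a) to a fixed $\ell_\infty$-grid obstruction, invoke the nonlinear Maurey--Pisier theorem to obtain finite metric cotype, then plug scaled discrete tori into the cotype inequality---does match the route indicated in the paper (the paragraph preceding the theorem states precisely that it follows from the nonlinear Maurey--Pisier theorem). But two places where you assert what actually needs to be proved are exactly the technical content of the result.

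The more serious gap is the pair of quantitative claims that your argument leans on but leaves open. First, you flag as ``the hard part'' the passage from the qualitative statement ``each $\mathcal{N}\in\F$ has finite metric cotype'' to a uniform $q(\F),C(\F)$ valid for all $\mathcal{N}\in\F$. The remedy you sketch---track constants through the Maurey--Pisier proof, plus ``a compactness argument on the space of $(m_*^{n_*})$-point metric spaces modulo isometric rescaling''---does not engage with the real difficulty: the members of $\F$ are arbitrary (possibly infinite) metric spaces, so there is no compact parameter space over which the cotype constant could be optimized. What actually makes the argument go is that the nonlinear Maurey--Pisier theorem of~\cite{MN08} is inherently quantitative: from the single numerical input $c_{\mathcal{N}}(\{1,\ldots,m_*\}^{n_*})\ge 1+\e_*$ it \emph{produces} a cotype exponent, a constant, and a bound on $m$ in terms of $k$, all depending only on $(m_*,n_*,\e_*)$. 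You would need to either cite that quantitative form or reprove it; simply asserting that bookkeeping suffices does not establish it. Second, and independently, you write ``provided $m(k)$ grows at most polynomially in $k$ (as is the case for any metric space with finite metric cotype).'' This parenthetical is unjustified and in fact is not a consequence of the bare definition~\eqref{eq:def metric cotype}, which only requires that \emph{some} even $m$ work for each $k$ with no a priori growth bound; the paper itself devotes Theorem~\ref{thm:K convex bound}, Question~\ref{Q:m on n}, and the discussion of~\cite{GMN11} to the delicacy of controlling $m$ in terms of $k$ even for Banach spaces. Without a polynomial bound on $m(k)$ the rearrangement $D\gtrsim(\log n)^{\alpha(\F)}$ breaks down, since the cardinality $m(k)^k$ of the test torus could be super-exponential in $k$. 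The polynomial bound on $m(k)$ is part of what the quantitative Maurey--Pisier theorem (in the BMW-type variant the paper refers to) supplies; it is not a freestanding fact about metric spaces with finite cotype.

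A minor point: the Fr\'echet embedding places $\M_*$ isometrically into $\ell_\infty^{|\M_*|}$ with real coordinates, not into a finite integer grid. To land in $\{1,\ldots,m_*\}^{n_*}$ you must first approximate $\M_*$ by a metric space with (suitably scaled) integer distances, at the cost of shrinking $\e_*$ to some $\e_*/2$; this is routine but should be said, since otherwise the claimed isometric embedding into a discrete grid is simply false for generic finite metric spaces.
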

We refer to~\cite[Sec.~1.1]{MN08-markov} and \cite{MN11-dich}, as
well as the survey paper~\cite{Men09}, for more information on the
theory of metric dichotomies. Theorem~\ref{thm:dich MN} leaves the
following fundamental question open.

\begin{question}[Metric cotype dichotomy problem~\cite{MN08,MN11-dich}]
Can one replace the constant $\alpha(\F)$ of Theorem~\ref{thm:dich
MN} by a constant $\alpha\in (0,\infty)$ that is independent of the
family $\F$? It isn't even known if one can take $\alpha(\F)=1$ for
all families of metric spaces $\F$.
\end{question}

\section{Markov type and cotype}\label{sec:Mtype}

As part of his investigation of the Lipschitz extension
problem~\cite{Bal92}, K. Ball introduced a stronger version of type
of metric spaces called {\em Markov type}. Other than its
applications to Lipschitz extension, the notion of Markov type has
found many applications in embedding theory, some of which will be
described  in Section~\ref{sec:random walks}.

Recall that a stochastic process $\{Z_t\}_{t=0}^\infty$ taking
values in $\{1,\ldots,n\}$ is called a {\em stationary reversible
Markov chain} if there exists an $n$ by $n$ stochastic matrix
$A=(a_{ij})$ such that for every $t\in \N\cup\{0\}$ and every
$i,j\in \{1,\ldots,n\}$ we have
$\Pr\left[Z_{t+1}=j|Z_t=i\right]=a_{ij}$, for every $i\in
\{1,\ldots,n\}$ the probability $\pi_i=\Pr[Z_t=i]$ does not depend
on $t$, and $\pi_i a_{ij}=\pi_ja_{ji}$ for all $i,j\in
\{1,\ldots,n\}$.

A metric space $(\M,d_\M)$ is said to have Markov type $p\in
(0,\infty)$ with constant $M\in (0,\infty)$ if for every $n\in \N$,
every stationary reversible Markov chain on $\{1,\ldots,n\}$, every
$f:\{1,\ldots,n\}\to \M$ and every time $t\in  \N$ we have
\begin{equation}\label{eq:def Mtype}
\E\left[d_\M(f(Z_t),f(Z_0))^p\right]\le M^pt\E\left[d_\M(f(Z_1),f(Z_0))^p\right].
\end{equation}

Note that the triangle inequality implies that every metric space
has Markov type $1$ with constant $1$. Ball proved~\cite{Bal92} that
if $p\in [1,2]$ then any $L_p(\mu)$ space has Markov type $p$ with
constant $1$. Thus, while it is well-known that the standard random
walk on the integers is expected to be at distance at most
$\sqrt{t}$ from the origin after $t$ steps, Ball established the
less well-known fact that {\em any} stationary reversible random
walk in Hilbert space has this property. If a metric space has
Markov type $p$ then it also has Enflo type $p$, as proved
in~\cite{NS02}. In essence, Enflo type $p$ corresponds
to~\eqref{eq:def Mtype} in the special case when the Markov chain is
the standard random walk on the Hamming cube $\{-1,1\}^n$. Thus the
Markov type $p$ condition is a strengthening of Enflo type, its
power arising in part from the flexibility to choose any stationary
reversible Markov chain whatsoever.

\begin{remark}\label{rem:edge Markov}
{\em We do not know to what extent Enflo type $p>1$ implies Markov
type $p$ (or perhaps Markov type $q$ for some $1<q<p$). When the
metric space $(\M,d_\M)$ is an unweighted graph equipped with the
shortest path metric (as is often the case in applications), it is
natural to introduce an intermediate notion of Markov type in which
the Markov chains are only allowed to ``move" along edges, i.e., by
considering~\eqref{eq:def Mtype} under the additional restriction
that if $a_{ij}>0$ then $\{f(i),f(j)\}$ is an edge. Call this notion
``edge Markov type $p$". For some time it was unclear whether edge
Markov type $p$ implies Markov type $p$. However, in~\cite{NP08} it
was shown that there exists a Cayley graph with edge Markov type $p$
for every $1<p<\frac43$ that does not have nontrivial Enflo type. It
is unknown whether a similar example exists with edge Markov type
$2$.}
\end{remark}

In~\cite{NPSS06} it was shown that for $p\in [2,\infty)$ any
$L_p(\mu)$ space has Markov type $2$ (with constant $M\asymp
\sqrt{p})$. More generally, it is proved in~\cite{NPSS06} that any
$p$-uniformly smooth Banach space has Markov type $p$. Uniform
smoothness, and its dual notion uniform convexity, are defined as
follows. Let $(X,\|\cdot\|_X)$ be a normed space with unit sphere
$S_X=\{x\in X:\ \|x\|_X=1\}$. The {\em modulus of uniform convexity}
of $X$ is defined for $\e\in [0,2]$ as
\begin{equation}\label{def:convexity}
\delta_X(\e)=\inf\left\{ 1-\frac{\|x+y\|_X}{2}:\
x,y\in S_X,\ \|x-y\|_X=\e\right\}.
\end{equation}
$X$ is said to be {\em uniformly convex} if $\delta_X(\e)>0$ for all
$\e\in (0,2]$. $X$ is said to have modulus of uniform convexity of
power type $q$ if there exists a constant $c\in (0,\infty)$ such
that $\delta_X(\e)\ge c\,\e^q$ for all $\e\in [0,2]$. It is
straightforward to check that in this case necessarily $q\ge 2$. The
{\em modulus of uniform smoothness} of $X$ is define for $\tau\in
(0,\infty)$ as
\begin{equation}\label{eq:def smoothness}
\rho_X(\tau)\eqdef \left\{\frac{\|x+\tau y\|_X+\|x-\tau
y\|_X}{2}-1:\ x,y\in S_X\right\}.
\end{equation}
$X$ is said to be {\em uniformly smooth} if $\lim_{\tau\to
0}\rho_X(\tau)/\tau=0$. $X$ is said to have modulus of uniform
smoothness of power type $p$ if there exists a constant $C\in
(0,\infty)$ such that $\rho_X(\tau)\le C\tau^p$ for all $\tau\in
(0,\infty)$. It is straightforward to check that in this case
necessarily $p\in [1,2]$.

For concreteness, we recall~\cite{Han56} (see also~\cite{BCL94}) that if $p\in (1,\infty)$ then $\delta_{\ell_p}(\e)\gtrsim_p \e^{\max\{p,2\}}$ and $\rho_{\ell_p}(\tau)\lesssim_p \tau^{\min\{p,2\}}$.  The moduli appearing
in~\eqref{def:convexity} and~\eqref{eq:def smoothness} relate to
each other via the following classical duality formula of
Lindenstrauss~\cite{Lin63}:
\begin{equation}\label{eq:lindenstrauss duality}
\rho_{X^*}(\tau)=\sup\left\{\frac{\tau\e}{2}-\delta_X(\e):\ \e\in
[0,2]\right\}.
\end{equation}
An important theorem of Pisier~\cite{Pis75-martingales} asserts that
$X$ admits an equivalent uniformly convex norm if and only if it
admits an equivalent norm whose modulus of uniform convexity is of
power type $q$ for some $q\in [2,\infty)$. Similarly, $X$ admits an
equivalent uniformly smooth norm if and only if it admits an
equivalent norm whose modulus of uniform smoothness is of power type
$p$ for some $p\in (1,2]$.

We will revisit these notions later, but at this point it suffices
to say that, as proved in~\cite{NPSS06}, any Banach space that
admits an equivalent norm whose  modulus of uniform smoothness is of
power type $p$ also has Markov type $p$. The relation between
Rademacher type $p$ and Markov type $p$ is unclear. While for every
$p\in (1,2]$ there exist Banach spaces with Rademacher $p$ that do
not admit any equivalent uniformly smooth norm~\cite{Jam78,PX87},
the following question remains open.

\begin{question}\label{Q:Mtype}
Does there exists a Banach space $(X,\|\cdot\|_X)$ with Markov type
$p>1$ yet $(X,\|\cdot\|_X)$ does not admit a uniformly smooth norm?
\end{question}

In addition to uniformly smooth Banach spaces, the Markov type of
several spaces of interest has been computed. For example, the
following classes of metric spaces are known to have Markov type
$2$: weighted graph theoretical trees~\cite{NPSS06}, series parallel
graphs~\cite{BKL07}, hyperbolic groups~\cite{NPSS06}, simply
connected Riemmanian manifolds with pinched negative sectional
curvature~\cite{NPSS06}, Alexandrov spaces of nonnegative
curvature~\cite{Oht09}. Also, the Markov type of certain
$p$-Wasserstein spaces was computed in~\cite{ANN10}.

Recall that a metric space $(M,d_\M)$ is {\em doubling} if there
exists $K\in \N$ such that for every $x\in \M$ and $r\in (0,\infty)$
there exist $y_1,\ldots,y_K\in \M$ such that $B(x,r)\subseteq
B(y_1,r/2)\cup\ldots\cup B(y_K,r/2)$, i.e., every ball in $\M$ can
be covered by $K$ balls of half the radius.  Here, and in what
follows, $B(z,\rho)=\{w\in \M:\ d_\M(z,w)\le \rho\}$ for all $z\in
\M$ and $\rho\ge 0$. The parameter $K$ is called a {\em doubling
constant} of $(\M,d_\M)$.
\begin{question}\label{Q:doubling}
Does every doubling metric space have Markov type 2? Specifically,
does the Heisenberg group have Markov type 2?
\end{question}
Assouad's embedding theorem~\cite{Ass83} says that if $(\M,d_\M)$ is
doubling then the metric space $(\M,d_\M^{1-\e})$ admits a
bi-Lipschitz embedding into Hilbert space for every $\e\in (0,1)$.
As observed in~\cite{NPSS06}, this implies that if $(\M,d_\M)$ is
doubling then it has Markov type $p$ for all $p<2$. It was also
shown in~\cite{NPSS06} that if $(\M,d_\M)$ is doubling with constant
$K\in (1,\infty)$ then for every $n\in \N$, every stationary
reversible Markov chain on $\{1,\ldots,n\}$, every
$f:\{1,\ldots,n\}\to \M$ and every time $t\in  \N$,
\begin{multline}\label{eq:weak Mtype}
\forall\, u>0,\quad \Pr\left[d_\M(f(Z_t),f(Z_0))\ge
u\sqrt{t}\right]\\\le \frac{O((\log
K)^2)}{u^2}\E\left[d_\M(f(Z_1),f(Z_0))^2\right].
\end{multline}
Thus, one can say that doubling spaces have ``weak Markov type $2$".
Using the method of~\cite{Rab08} it is also possible to show that
doubling spaces have Enflo type $2$.

Further support of a positive answer to Question~\ref{Q:doubling}
was obtained in~\cite{NPSS06}, where it was shown that the Laakso
graphs $\{G_k\}_{k=0}^\infty$ have Markov type $2$. These graphs are
defined~\cite{Laa02} iteratively by letting $G_0$ be a single edge
and $G_{i+1}$ is obtained by replacing the middle third of each edge
of $G_i$ by a quadrilateral; see Figure~\ref{fig:lang}.
\begin{figure}[ht]
\centering \fbox{
\begin{minipage}{4.7in}
\centering
\includegraphics{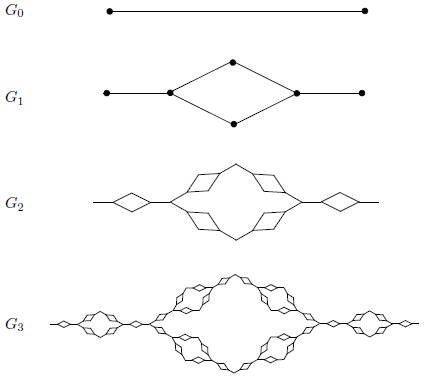}
 \caption{The first four Laakso graphs.} \label{fig:geronimus}
  \end{minipage}
}\label{fig:lang}
\end{figure}
Equipped with their shortest path metric, each Laakso graph $G_k$ is
doubling with constant $6$ (see the proof of~\cite[Thm.~2.3]{LP01}),
yet, as proved by Laakso~\cite{Laa02}, we have $\lim_{k\to \infty}
c_{\ell_2}(G_k)=\infty$ (in fact~\cite[Thm.~2.3]{LP01} asserts that
$c_{\ell_2}(G_k)\gtrsim \sqrt{k}$). The graphs
$\{G_k\}_{k=0}^\infty$ are among the standard examples of doubling
spaces that do not well-embed into Hilbert space, yet, as proved
in~\cite{NPSS06}, they do have Markov type $2$. The Heisenberg
group, i.e., the group of all $3$ by $3$ matrices generated by the
set
\begin{equation*}
S=\left\{ \begin{pmatrix}
  1 & 1&0 \\
   0 & 1&0\\
   0&0&1
   \end{pmatrix},  \begin{pmatrix}
  1 & -1&0 \\
   0 & 1&0\\
   0&0&1
   \end{pmatrix},\begin{pmatrix}
  1 & 0&0 \\
   0 & 1&1\\
   0&0&1
   \end{pmatrix},\begin{pmatrix}
  1 & 0&0 \\
   0 & 1&-1\\
   0&0&1
   \end{pmatrix}\right\},
   \end{equation*}
and equipped with the associated word metric, is another standard
example of a doubling space that does not admit a bi-Lipschitz
embedding into Hilbert space~\cite{Pan89,Sem96}. However, as
indicated in Question~\ref{Q:doubling}, the intriguing problem
whether the Heisenberg group has Markov type $2$ remains open.

Note that by the nonlinear Maurey-Pisier theorem~\cite{MN08}, as
discussed in Section~\ref{sec:cotype}, a doubling metric space must
have finite metric cotype. The Laakso graphs $\{G_k\}_{k=0}^\infty$,
being examples of series parallel graphs, admit a bi-Lipschitz
embedding into $\ell_1$ with distortion bounded by a constant
independent of $k$, as proved in~\cite{GNRS04}. Since $\ell_1$ has
Rademacher cotype $2$, it follows from Theorem~\ref{thm:cotype equiv} that the Laakso graphs have metric
cotype $2$ (with the constant $C$ in~\eqref{eq:def metric cotype}
taken to be independent of $k$). We do not know if all doubling
metric spaces have metric cotype $2$. The Heisenberg group is a
prime example for which this question remains open. Note that the
Heisenberg group does not embed into any $L_1(\mu)$
space~\cite{CK10}. Therefore the above reasoning for the Laakso
graphs does not apply to the Heisenberg group.

Metric trees and the Laakso graphs are nontrivial examples of planar
graphs that have Markov type $2$. This result of~\cite{NPSS06} was
extended to all series parallel graphs in~\cite{BKL07}. It was also
shown in~\cite{NPSS06} that any planar graph satisfies the weak
Markov type $2$ inequality~\eqref{eq:weak Mtype}, and
using~\cite{Rab08} one can show that planar graphs have Enflo type
$2$. It remains open whether all planar graphs have Markov type $2$.

\subsection{Lipschitz extension via Markov type and cotype} Here we
explain Ball's original motivation for introducing Markov type.

Ball also introduced in~\cite{Bal92} a {\em linear} property of
Banach spaces that he called Markov cotype $2$, and he indicated a
two-step definition that could be used to extend this notion to
general metric spaces. Motivated by Ball's ideas, the following
variant of his definition was introduced in~\cite{MN10-calculus}. A
metric space $(\M,d_\M)$ has {\em metric Markov cotype} $q\in
(0,\infty)$ with constant $C\in (0,\infty)$ if  for every $m,n\in
\N$, every $n$ by $n$ symmetric stochastic matrix $A=(a_{ij})$, and
every $x_1,\ldots,x_n\in \M$, there exist $y_1,\ldots,y_n\in \M$
satisfying
\begin{multline}\label{def:markov cotype}
\sum_{i=1}^n d_\M(x_i,y_i)^q+m\sum_{i=1}^n\sum_{j=1}^n a_{ij}
d_\M(y_i,y_j)^q\\\le C^q\sum_{i=1}^n\sum_{j=1}^n
\left(\frac{1}{m}\sum_{t=0}^{m-1}A^t\right)_{ij}d_\M(x_i,x_j)^q.
\end{multline}

To better understand the meaning of~\eqref{def:markov cotype},
observe that the Markov type $p$ condition for $(\M,d_\M)$ implies
that
\begin{equation}\label{eq:to reverse type}
\sum_{i=1}^n\sum_{j=1}^n (A^m)_{ij}d_\M(x_i,x_j)^p\le M^p\sum_{i=1}^n\sum_{j=1}^n a_{ij}d_\M(x_i,x_j)^p.
\end{equation}
Thus~\eqref{def:markov cotype} aims to reverse the direction of the
inequality in~\eqref{eq:to reverse type}, with the following
changes. One is allowed to pass from the initial points
$x_1,\ldots,x_n\in \M$ to new points $y_1,\ldots,y_m\in \M$. The
first summand in the left hand side of~\eqref{def:markov cotype}
ensures that on average $y_i$ is close to $x_i$. The remaining terms
in~\eqref{def:markov cotype} correspond to the reversal
of~\eqref{eq:to reverse type}, with $\{x_i\}_{i=1}^n$ replaced by
$\{y_i\}_{i=1}^n$  in the left hand side, and the power $A^m$
replaced by the Ces\`aro average $\frac{1}{m}\sum_{t=0}^{m-1}
A^{t}$.

Although~\eqref{def:markov cotype} was inspired by Ball's ideas, the
formal relation between the above definition of metric Markov cotype
and Ball's original definition in~\cite{Bal92} is unclear. We chose
to work with the above definition since it suffices for the purpose
of Ball's original application, and in addition it can be used for
other purposes. Specifically, metric Markov cotype is key to the
development of calculus for nonlinear spectral gaps and the
construction of super-expanders; an aspect of the Ribe program that
we will not describe here for lack of space
(see~\cite{MN10-calculus}).

For $q\in [1,\infty)$, a metric space $(\M,d_\M)$ is called
$W_q$-barycentric with constant $\Gamma\in (0,\infty)$ if for every
finitely supported probability measure $\mu$ on $\M$ there exists a
point $\beta_\mu\in \M$ (a barycenter of $\mu$) such that
$\beta_{\d_x}=x$ for all $x\in X$ and for every two finitely
supported probability measures $\mu,\nu$ we have
$d_\M(\beta_\mu,\beta_\nu)\le \Gamma W_q(\mu,\nu)$, where
$W_q(\cdot,\cdot)$ denotes the $q$-Wasserstein  metric
(see~\cite[Sec.~7.1]{Vil03}). Note that by convexity every Banach
space is $W_q$-barycentric with constant $1$.

The following theorem from~\cite{MN12-ext} is a metric space variant
of Ball's Lipschitz extension theorem~\cite{Bal92} (the proof
follows the same ideas as in~\cite{Bal92} with some technical
differences of lesser importance).

\begin{theorem}\label{thm:ball ext}
Fix $q\in (0,\infty)$ and let $(\M,d_\M)$,
$(\mathcal{N},d_\mathcal{N})$ be two metric spaces. Assume that $\M$
has Markov type $q$ with constant $M$ and $\mathcal{N}$ has metric
Markov cotype $q$ with constant $C$. Assume also that $\mathcal{N}$
is $W_q$-barycentric with constant $\Gamma$. Then for every
$A\subseteq \M$, every finite $S\subseteq \M\setminus A$, and every
Lipschitz mapping $f:A\to \N$ there exists $F:A\cup S\to \N$
satisfying $F(x)=f(x)$ for all $x\in A$ and
$$
\|F\|_{\Lip}\lesssim_{\Gamma,M,C} \|f\|_{\Lip},
$$
where the implied constant depends only on $\Gamma, M, C$.
\end{theorem}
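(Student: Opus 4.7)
I would follow Ball's scheme~\cite{Bal92}, in the form adapted in~\cite{MN12-ext}: construct a reversible Markov chain on $V:=A\cup S$ that encodes proximity in $\M$, produce a crude preliminary extension by taking $W_q$-barycenters of hitting distributions on $A$, and then smooth it via the metric Markov cotype of $\mathcal{N}$. Normalize $L:=\|f\|_{\Lip}=1$, and by a standard $\eps$-net approximation reduce to the case where $A$ is finite (pass to a fine net and recover the exact boundary condition $F|_A=f$ at the end by absorbing a small discrepancy into the Lipschitz constant). Enumerate $V=\{v_1,\dots,v_n\}$, and choose a symmetric stochastic matrix $P=(p_{ij})$ on $V$ whose positive entries correspond to ``short edges'' in $\M$, arranged so that any two vertices $u,v\in V$ can be joined by a $P$-path $u=w_0,\dots,w_k=v$ with $d_\M(w_{\ell-1},w_\ell)\lesssim d_\M(u,v)/k$ at every scale.

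\textbf{Preliminary extension and cotype smoothing.} For $v_i\in A$ set $x_i:=f(v_i)$. For $v_i\in S$, let $\eta_i$ be the hitting distribution on $A$ of the chain with kernel $P$ absorbed on $A$ started at $v_i$, and set $x_i:=\beta_{f_*\eta_i}\in\mathcal{N}$, using the $W_q$-barycentric property of $\mathcal{N}$. The barycenter contraction yields
\[
d_\mathcal{N}(x_i,x_j)\le\Gamma\,W_q(f_*\eta_i,f_*\eta_j)\le\Gamma\,W_q(\eta_i,\eta_j),
\]
and a synchronous coupling of the absorbed chains combined with the Markov type $q$ of $\M$ bounds $W_q(\eta_i,\eta_j)^q$ by $M^q$ times a sum of expected one-step $q$-th powers along a chain path connecting $v_i$ and $v_j$. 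Now pick a parameter $m\in\N$ and apply metric Markov cotype of $\mathcal{N}$ to $P$ and $(x_1,\dots,x_n)$ to obtain $(y_1,\dots,y_n)\in\mathcal{N}^n$ with
\[
\sum_{i} d_\mathcal{N}(x_i,y_i)^q+m\sum_{i,j}p_{ij}\,d_\mathcal{N}(y_i,y_j)^q\le C^q\sum_{i,j}\Bigl(\tfrac{1}{m}\sum_{t=0}^{m-1}P^t\Bigr)_{ij}d_\mathcal{N}(x_i,x_j)^q.
\]
Rewriting the right-hand side as $C^q\,\E\!\left[d_\mathcal{N}(x_{Z_0},x_{Z_T})^q\right]$ with $Z_t$ the chain started uniformly and $T$ uniform on $\{0,\dots,m-1\}$, and plugging in the barycenter estimate together with $\E[d_\M(Z_0,Z_t)^q]\le M^q t\,\E[d_\M(Z_0,Z_1)^q]$ from the Markov type $q$ of $\M$, this is dominated by $(C\Gamma M)^q\cdot m\sum_{i,j}p_{ij}d_\M(v_i,v_j)^q$. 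Define $F(v_i):=y_i$ for $v_i\in S$ and $F(v_i):=f(v_i)$ for $v_i\in A$; the first term $\sum_i d_\mathcal{N}(x_i,y_i)^q$ on the left controls how far $y_i$ drifts from $f(v_i)$ on $A$, a deviation absorbed into the final implicit constant.

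\textbf{Main obstacle.} The crux of the argument is converting the averaged edge inequality $\sum_{i,j}p_{ij}d_\mathcal{N}(y_i,y_j)^q\lesssim\sum_{i,j}p_{ij}d_\M(v_i,v_j)^q$ into the pointwise Lipschitz bound $d_\mathcal{N}(F(u),F(v))\lesssim d_\M(u,v)$ for every $u,v\in V$. For arbitrary $u,v$ one selects a $P$-path $u=w_0,\dots,w_k=v$ with edge lengths $\lesssim d_\M(u,v)/k$ and applies the triangle inequality with H\"older's inequality to obtain
\[
d_\mathcal{N}(F(u),F(v))^q\le k^{q-1}\sum_{\ell=1}^{k}d_\mathcal{N}(y_{w_{\ell-1}},y_{w_\ell})^q,
\]
before invoking the edge bound. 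The delicate technical work is therefore in the \emph{setup}: constructing $P$ to have uniformly good multi-scale connectivity (typically via a hierarchy of $\M$-nets) and calibrating $m$ so that the competing factors $\Gamma$, $M$, $C$ in the above chain of inequalities balance. This calibration, rather than any single estimate, carries the bulk of the proof.
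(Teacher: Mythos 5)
The paper itself does not prove Theorem~\ref{thm:ball ext}; it states the result and refers to~\cite{MN12-ext} (with the remark that the proof follows Ball~\cite{Bal92}). So I can only assess your proposal on its own terms and against the surrounding text, which does shed light on how the hypotheses are meant to be used.

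Your overall skeleton --- construct a reversible chain on $V=A\cup S$, build a preliminary extension via $W_q$-barycenters, then smooth via metric Markov cotype --- is the right circle of ideas. However, there are two genuine gaps, and both occur at exactly the points where the real work of Ball's argument lies.

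\textbf{The preliminary extension is not shown (and need not be) Lipschitz.} You define $x_i=\beta_{f_*\eta_i}$ where $\eta_i$ is the hitting distribution on $A$, and then assert that $W_q(\eta_i,\eta_j)$ is controlled by ``a synchronous coupling of the absorbed chains combined with the Markov type $q$ of $\M$.'' In an abstract metric space there is no canonical synchronous coupling, and there is no general reason why nearby non-absorbing states must produce Wasserstein-close hitting distributions: this depends delicately on the transition kernel $P$ near the boundary of $A$, not on the ambient geometry. If one could show $W_q(\eta_i,\eta_j)\lesssim d_\M(v_i,v_j)$ uniformly, then $x_i$ itself would already be the desired Lipschitz extension and the cotype smoothing would be superfluous; the fact that the theorem needs metric Markov cotype at all tells you the preliminary map is not expected to be Lipschitz with a controlled constant.

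\textbf{Metric Markov cotype yields $\ell^q$-averaged control, not pointwise control.} The cotype inequality produces $y_1,\dots,y_n$ with
\begin{equation*}
\sum_i d_\N(x_i,y_i)^q + m\sum_{i,j}p_{ij}\,d_\N(y_i,y_j)^q
\le C^q\sum_{i,j}\Bigl(\tfrac1m\textstyle\sum_{t=0}^{m-1}P^t\Bigr)_{ij}d_\N(x_i,x_j)^q.
\end{equation*}
The first sum on the left does not bound any single $d_\N(x_i,y_i)$, so setting $F=f$ on $A$ and $F=y_i$ on $S$ and then ``absorbing the drift into the constant'' is not justified: some $y_i$ may be far from the corresponding $x_i$ even though the $\ell^q$ sum is small. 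Likewise the second sum does not give the pointwise edge bound $d_\N(y_{w_{\ell-1}},y_{w_\ell})\lesssim d_\M(w_{\ell-1},w_\ell)$ that your H\"older-along-a-$P$-path step requires; it only controls the $p_{ij}$-weighted average of these quantities. This ``averaged $\Rightarrow$ pointwise'' conversion is precisely where Ball's argument does something clever: the chain $P$ and the parameter $m$ are chosen adapted to each point one is extending to (relative to $A$), and the pointwise estimate $d_\N(F(z),f(a))\lesssim d_\M(z,a)$ comes out of a variational/duality consideration built around that specific chain, not out of a generic net-hierarchy argument. Your ``Main obstacle'' paragraph correctly identifies the difficulty, but the proposed resolution (multi-scale $P$-paths plus H\"older) presupposes the pointwise edge bounds one is trying to establish.

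In short, the ingredients you list are the right ones, and your diagnosis of where the difficulty lies is accurate. But the specific mechanisms you invoke (synchronous coupling for hitting distributions; absorbing $\ell^q$ drift into constants; H\"older along $P$-paths) do not close those gaps, so the proposal does not yet constitute a proof.
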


Ball proved Theorem~\ref{thm:ball ext} when  $\mathcal{N}$ is a
Banach space, $q=2$, and the metric Markov cotype assumption is
replaced by his linear notion of Markov cotype. He proved that every
Banach space that admits an equivalent norm with modulus of uniform
convexity of power type $2$ satisfies his notion of Markov cotype
$2$. In combination with~\cite{NPSS06}, it follows that the
conclusion of Theorem~\ref{thm:ball ext} holds true if $\M$ is a
Banach space that admits an equivalent norm with modulus of uniform
smoothness of power type $2$ and $\mathcal{N}$ is a Banach space
that admits an equivalent norm with modulus of uniform convexity of
power type $2$. In particular, for $1<q\le 2\le p<\infty$ we can
take $\M=\ell_p$ and $\mathcal{N}=\ell_q$. This answers positively a
1983 conjecture of Johnson and Lindenstrauss~\cite{JL84}. The
motivation of the question of Johnson and Lindenstrauss belongs to
the Ribe program (see also~\cite{MP84}): to obtain a metric analogue
of a classical theorem of Maurey~\cite{Mau74} that implies this
result for linear operators, i.e., in Maurey's setting $A$ is a
closed linear subspace and $f$ is a linear operator, in which case
the conclusion is that $F:\M\to \mathcal{N}$ is a bounded linear
operator with $\|F\|\lesssim_{M,C} \|f\|$. Examples of applications
of Ball's extension theorem can be found in~\cite{Nao01,MN06-ball}.

In~\cite{MN12-ext} it is shown that if $(X,\|\cdot\|_X)$ is a Banach
space that admits an equivalent norm with modulus of uniform
convexity of power type $q$ then it has metric Markov cotype $q$.
Also, it is shown in~\cite{MN12-ext} that certain barycentric metric
spaces have metric Markov cotype $q$; this is true in particular for
$CAT(0)$ spaces, and hence also all simply connected manifolds of
nonpositive sectional curvature (see~\cite{BH99}). These facts, in
conjunction with Theorem~\ref{thm:ball ext}, yield new Lipschitz
extension theorems; see~\cite{MN12-ext}.

For Banach
spaces the notion of metric Markov cotype $q$ does not coincide with
Rademacher cotype: one can deduce from a clever construction of Kalton~\cite{Kal11}
 that there exists a closed linear subspace $X$ of
$L_1$ (hence $X$ has Rademacher cotype $2$) that does not have
metric Markov cotype $q$ for any $q<\infty$. The following natural
question remains open.
\begin{question}\label{Q:L1}
Does $\ell_1$ have metric Markov cotype 2?
\end{question}

By Theorem~\ref{thm:ball ext}, a positive solution of
Question~\ref{Q:L1} would answer a well known question of
Ball~\cite{Bal92}, by showing that every Lipschitz function from a
subset of $\ell_2$ to $\ell_1$ can be extended to a Lipschitz
function defined on all of $\ell_2$. See~\cite{MM10} for
ramifications of this question in theoretical computer science.

\section{Markov convexity}\label{sec:Mconvexity}

Deep work of James~\cite{Jam72-self,Jam72} and Enflo~\cite{Enf72}
implies that a Banach space $(X,\|\cdot\|_X)$ admits an equivalent
uniformly convex norm if and only if it admits an equivalent
uniformly smooth norm, and these properties are equivalent to the
assertion that any Banach space $(Y,\|\cdot\|_Y)$ that is finitely
representable in $X$ must be reflexive. Such spaces are called {\em
superreflexive} Banach spaces. The Ribe program suggests that
superreflexivity has a purely metric reformulation. This is indeed
the case, as proved by Bourgain~\cite{Bou86}.

For $k,n\in \N$ let $T^k_n$ denote the complete $k$-regular tree of
depth $n$, i.e., the finite unweighted rooted tree such that the
length of any root-leaf path equals $n$ and every non-leaf vertex
has exactly $k$ adjacent vertices. We shall always assume that
$T^k_n$ is equipped with the shortest path metric
$d_{T_r^k}(\cdot,\cdot)$, i.e., the distance between any two
vertices is the sum of their distances to their least common
ancestor. Bourgain's characterization of
superreflexivity~\cite{Bou86} asserts that a Banach space
$(X,\|\cdot\|_X)$ admits an equivalent uniformly convex norm if and
only if for all $k\ge 3$ we have \begin{equation}\label{eq:bourgain
superreflexiv char} \lim_{n\to\infty}c_X(T_n^k)=\infty.
\end{equation}
Bourgain's proof also yields the following asymptotic computation of
the Euclidean distortion of $T_n^k$:
\begin{equation}\label{eq:bourgain tree}
k\ge 3\implies c_{\ell_2}\left(T^k_n\right)\asymp \sqrt{\log n}.
\end{equation}
All known proofs of the lower bound $c_{\ell_2}\left(T^k_n\right)\gtrsim
\sqrt{\log n}$ are non-trivial (in addition to the original proof
of~\cite{Bou86}, alternative proofs appeared
in~\cite{Mat99,LS03,LNP09}). In this section we will describe a
proof of~\eqref{eq:bourgain tree} from the viewpoint of random
walks.

It is a nontrivial consequence of the work of
Pisier~\cite{Pis75-martingales} that the Banach space property of
admitting an equivalent norm whose modulus of uniform convexity has
power type $p$ is an isomorphic local linear property. As such,
 the Ribe program calls for a purely metric
reformulation of this property. Since Pisier
proved~\cite{Pis75-martingales} that a Banach space is
superrreflexive if and only if it admits an equivalent norm whose
modulus of uniform convexity has power type $p$ for some $p\in
[2,\infty)$, this question should be viewed as asking for a
quantitative refinement of Bourgain's metric characterization of
superreflexivity.

The following definition is due to~\cite{LNP09}. Let $\{Z_t\}_{t\in
\Z}$ be a Markov chain on a state space $\Omega$. Given integers
$k,s\ge 0$,  denote by $\{\widetilde Z_t(s)\}_{t\in \Z}$ the process
that equals $Z_t$ for time $t\le s$, and evolves independently (with
respect to the same transition probabilities) for time $t > s$. Fix
$p>0$. A metric space $(\M,d_\M)$ is called {Markov $p$-convex with
constant $\Pi$} if for every Markov chain $\{Z_t\}_{t\in \Z}$ on a
state space $\Omega$, and every mapping $f : \Omega \to \M$,
\begin{multline}\label{eq:def-mconvex}
\sum_{s=0}^{\infty}\sum_{t\in \Z}\frac{\E\left[
d_\M\left(f(Z_t),f\left(\widetilde
Z_t\left(t-2^{s}\right)\right)\right)^p\right]}{2^{sp}}
\\\le \Pi^p
\cdot \sum_{t\in \Z}\E \big[d_\M(f(Z_t),f(Z_{t-1}))^p\big].
\end{multline}
The infimum over those $\Pi\in [0,\infty]$ for
which~\eqref{eq:def-mconvex} holds for all Markov chains is called
the Markov $p$-convexity constant of $\M$, and is denoted
$\Pi_p(\M)$. We say that $(\M,d_\M)$ is Markov $p$-convex if
$\Pi_p(\M) < \infty$.

We will see in a moment how to work with~\eqref{eq:def-mconvex}, but
we first state the following theorem, which constitutes a completion
of the step of the Ribe program that corresponds to the Banach space
property of admitting an equivalent norm whose modulus of uniform
convexity has power type $p$. The ``only if" part of this statement
is due to~\cite{LNP09} and the ``if" part is due
to~\cite{MN08-markov}.

\begin{theorem}\label{thm:Mconv}
Fix $p\in [2,\infty)$. A Banach space $(X,\|\cdot\|_X)$ admits an
equivalent norm whose modulus of uniform convexity has power type
$p$ if and only if $(X,\|\cdot\|_X)$ is Markov $p$-convex.
\end{theorem}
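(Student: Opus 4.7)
The theorem is a biconditional linking an isomorphic linear-analytic invariant of Banach spaces (the existence of an equivalent norm with modulus of uniform convexity of power type $p$) to the purely metric invariant of being Markov $p$-convex. The two directions require rather different techniques: the ``only if'' rests on Pisier's martingale machinery for uniformly convex spaces, while the ``if'' is contrapositive, combining Bourgain's metric characterization of superreflexivity with a quantitative refinement pinning down the exact power type.

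\noindent\textbf{Only if direction.} Suppose $X$ has modulus of uniform convexity of power type $p$. By Pisier's renorming theorem this is equivalent to the martingale cotype $p$ inequality $\sum_k\E\|M_k-M_{k-1}\|_X^p\le C\sup_k\E\|M_k\|_X^p$ for every $X$-valued martingale. Given a Markov chain $\{Z_t\}$ on $\Omega$ and $f\colon\Omega\to X$, the crucial structural fact is that, conditional on $\mathcal{F}_{t-2^s}$, the pair $(Z_t,\widetilde Z_t(t-2^s))$ consists of iid samples, so by iid symmetry and convexity of $\|\cdot\|^p$ the quantity $\E\|f(Z_t)-f(\widetilde Z_t(t-2^s))\|_X^p$ is comparable up to a factor $2^p$ to the conditional Doob-variance of the martingale $M_r^{(t)}=\E[f(Z_t)\mid\mathcal{F}_r]$ over $[t-2^s,t]$. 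The plan is then to apply the martingale cotype $p$ inequality to the reverse dyadic martingale $\{M^{(t)}_{t-2^s}\}_{s\ge 0}$, whose orthogonal dyadic increments absorb the scale factor $2^{-sp}$ on the LHS of \eqref{eq:def-mconvex}. A Fubini rearrangement over $(t,s,r)$ followed by a conditional Jensen step (using the Markov property) reduces the resulting expression to the single-step quantities $\sum_r\E\|f(Z_r)-f(Z_{r-1})\|_X^p$, which is the desired RHS.

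\noindent\textbf{If direction.} Assume $X$ is Markov $p$-convex. The plan is first to prove $X$ is superreflexive and then to sharpen the power type to $p$. Since $\Pi_p(\cdot)$ is a bi-Lipschitz invariant, Bourgain's characterization \eqref{eq:bourgain superreflexiv char} reduces superreflexivity to showing that $\Pi_p(T_n^k)\gtrsim (\log n)^{1/p}$ for the complete $k$-regular tree of depth $n$. This is witnessed by a simple random walk on $T_n^k$ started at the root: the RHS of \eqref{eq:def-mconvex} is of order $n$, while at each dyadic scale $s\le\log_2 n$ the two independent evolutions $(Z_t,\widetilde Z_t(t-2^s))$ typically diverge in the tree metric by $\Theta(2^s)$, contributing $\Theta(n)$ per scale and $\Theta(n\log n)$ in total to the LHS. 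For the sharper assertion that $X$ actually has a power type $p$ renorming (and not merely some $q>p$), assume for contradiction that the best convexity power type of $X$ is some $q>p$, and use Pisier's renorming theory to produce near-extremal $X$-valued martingales witnessing failure of the $p$-cotype inequality. One then lifts these martingales to reversible Markov chains whose state spaces encode the martingale filtration and whose dyadic blocks match the $2^s$-structure of \eqref{eq:def-mconvex}, producing Markov chains that quantitatively violate Markov $p$-convexity as the martingales approach extremality.

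\noindent\textbf{Main obstacle.} The real difficulty is the power-type sharpness in the ``if'' direction: the tree lower bound alone establishes only superreflexivity, not the precise exponent $p$. The delicate step is lifting near-extremal martingales in a supposedly non-$p$-convex space to Markov chains that preserve both the dyadic scale structure of \eqref{eq:def-mconvex} and the relevant quantitative martingale increments, so that the Markov $p$-convexity defect diverges at a rate matching the failure of the $p$-cotype inequality. Handling this uniformly across the whole range $p\in[2,\infty)$, rather than only the Hilbertian case $p=2$, is the core technical contribution of \cite{MN08-markov}.
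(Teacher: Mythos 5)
Your overall architecture (two directions requiring different tools, with the ``only if'' leaning on Pisier's martingale machinery and the ``if'' needing a separate sharpness argument) is reasonable, but both halves have problems, and the second has a genuine gap.

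\textbf{``Only if'' direction.} You correctly isolate the key structural fact that $(Z_t,\widetilde Z_t(t-2^s))$ is conditionally i.i.d.\ given $\mathcal{F}_{t-2^s}$, and that the resulting quantity is comparable to $\E\|f(Z_t)-\E[f(Z_t)\mid\mathcal F_{t-2^s}]\|^p$. But the assertion that ``orthogonal dyadic increments absorb the scale factor $2^{-sp}$'' is not an argument: martingale increments are not orthogonal in a general Banach space, and the scale factor cannot simply be absorbed. The serious issue that is elided is quantitative: a bare triangle-inequality/H\"older estimate already gives, for each fixed scale $s$, $\sum_t 2^{-sp}\E\|f(Z_t)-f(\widetilde Z_t(t-2^s))\|^p\lesssim\sum_t\E\|f(Z_t)-f(Z_{t-1})\|^p$, so summing over $s$ produces a bound of order $(\text{number of scales})\cdot\text{RHS}$, i.e.\ $\Pi_p\lesssim(\log T)^{1/p}$ for finite chains -- and nothing at all for chains indexed by $\Z$. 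The whole content of the LNP09 proof is that $p$-uniform convexity eliminates this logarithmic loss, and this requires applying the two-point $p$-uniform convexity (``fork'') inequality conditionally across the dyadic scales and telescoping carefully, not a one-shot invocation of Pisier's renorming theorem on a reverse martingale. Your ``Fubini + conditional Jensen'' sentence names the shape of the desired reduction without performing it.

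\textbf{``If'' direction.} Step 1 (Markov $p$-convexity $\Rightarrow c_X(T_n^k)\gtrsim(\log n)^{1/p}/\Pi_p(X)\Rightarrow$ superreflexivity via Bourgain) is correct and matches the computation in the paper. Step 2, however, does not work as described. Consider the canonical near-extremal example: a dyadic martingale $M_0,\dots,M_n$ in a non-$p$-convexifiable space with $\|dM_k\|\asymp 1$ and $\sup_k\|M_k\|\lesssim 1$ (a James $\delta$-tree), which maximally violates martingale cotype $p$. Lift it to the descending random walk $Z_t$ on $\{-1,1\}^{\le n}$ with $f(Z_t)=M_t(Z_t)$, exactly the filtration-encoding chain you propose. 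Then the right-hand side of \eqref{eq:def-mconvex} is $\sum_t\E\|dM_t\|^p\asymp n$, while the left-hand side satisfies $\E\|f(Z_t)-f(\widetilde Z_t(t-2^s))\|^p\le(2\sup_k\|M_k\|)^p\lesssim 1$, so the left-hand side is $\lesssim\sum_t\sum_s 2^{-sp}\lesssim n$. Hence $\Pi_p\asymp 1$ for this chain: the lift \emph{satisfies} Markov $p$-convexity. The point is that failure of martingale cotype $p$ means the martingale spreads \emph{slowly}, which only \emph{helps} the Markov $p$-convexity inequality, whose left-hand side penalizes rapid divergence. The actual witnesses of failure of Markov $p$-convexity in, say, $\ell_q$ for $p<q$ are not lifted martingales but random walks on the embedded trees $T_n^k$, which diverge linearly. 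So the martingale-lifting plan must be abandoned; the argument in \cite{MN08-markov} constructs genuinely tree-like Markov chains from the failure of $p$-uniform convexity rather than lifting martingales. (Also, reversibility plays no role in \eqref{eq:def-mconvex} -- it is not required, unlike for Markov type -- so demanding reversible lifts is an extraneous constraint.) Finally, even if one could combine Step 1 with a hypothetical upper bound $c_X(T_n^k)\lesssim(\log n)^{1/q}$ for spaces of convexity power type $q$, this would fail: finite-dimensional $X$ (which is Markov $2$-convex and has power type $2$) has $c_X(T_n^k)$ growing polynomially in $n$, not like $\sqrt{\log n}$. So a tree-distortion upper bound cannot close the argument, confirming that the proof must proceed differently.
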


The meaning of~\eqref{eq:def-mconvex} will become clearer once we
examine the following example. Fix an integer $k\ge 3$ and let
$\{Z_t\}_{t\in \Z}$ be the following Markov chain whose state space
is $T_n^k$. $Z_t$ equals the root of $T_n^k$ for $t\le 0$, and
$\{Z_t\}_{t\in \N}$ is the standard {\em outward} random walk (i.e.,
if $0\le t<n$ then $Z_{t+1}$ is distributed uniformly over the $k-1$
neighbors of $Z_t$ that are further away from the root than $Z_t$),
with absorbing states at the leaves. Suppose that $(\M,d_\M)$ is a
metric space that is Markov $p$-convex with constant $\Pi$, and for
some $\lambda,D\in (0,\infty)$ we are given an embedding $f:T_n^k\to
\M$ that satisfies $\lambda d_{T_n^k}(x,y)\le d_\M(f(x),f(y))\le
D\lambda d_{T_n^k}(x,y)$ for all $x,y\in T_n^k$. For every $s,t\in
\N$ such that $2^s\le t\le n$, with probability at least $1-1/(k-1)$
the vertices $Z_{t-2^s+1}$ and $\widetilde Z_{t-2^s+1}(t-2^s)$ are
distinct, in which case $d_{T_n^k}(Z_t,\widetilde
Z_{t}(t-2^s))=2^{s+1}$. It therefore follows
from~\eqref{eq:def-mconvex} that
\begin{multline*}
\lambda^pn\log n\lesssim \sum_{s=0}^{\infty}\sum_{t\in \Z}\frac{\E\left[
d_\M\left(f(Z_t),f\left(\widetilde
Z_t\left(t-2^{s}\right)\right)\right)^p\right]}{2^{sp}}
\\\le \Pi^p
\cdot \sum_{t\in \Z}\E \big[d_\M(f(Z_t),f(Z_{t-1}))^p\big]\le \Pi^pD^p\lambda^pn.
\end{multline*}
Consequently, $$c_\M(T_n^k)\gtrsim \frac{1}{\Pi_p(\M)}(\log
n)^{1/p}.$$ In particular, when $\M=\ell_2$ this
explains~\eqref{eq:bourgain tree}.

A different choice of Markov chain can be used in combination with
Markov convexity to compute the asymptotic behavior of the Euclidean
distortion of the {\em lamplighter group} over $\Z_n$;
see~\cite{LNP09,ANV10}. Similar reasoning also applies to the Laakso
graphs $\{G_k\}_{k=0}^\infty$, as depicted in Figure~\ref{fig:lang}.
In this case let $\{Z_t\}_{t=0}^\infty$ be the Markov chain that
starts at the leftmost vertex of $G_k$ (see
Figure~\ref{fig:lang}), and at each step moves to the right. If
$Z_t$ is a vertex of degree $3$ then $Z_{t+1}$ equals one of the two
vertices on the right of $Z_t$, each with probability $\frac12$. An
argument along the above lines (see~\cite[Sec.~3]{MN08-markov})
yields
$$
c_\M(G_k)\gtrsim \frac{1}{\Pi_p(\M)}k^{1/p}.
$$
This estimate is sharp when $\M=\ell_q$ for all $q\in (1,\infty)$.
Note that since the Laakso graphs are doubling, they do not contain
bi-Lipschitz copies of $T_n^3$ with distortion bounded independently
of $n$. Thus the Markov convexity invariant applies equally well to
trees and Laakso graphs, despite the fact that these examples are
very different from each other as metric spaces. Recently Johnson
and Schechtman~\cite{JS09} proved that if for a  Banach space
$(X,\|\cdot\|_X)$ we have $\lim_{k\to \infty} c_X(G_k)=\infty$ then
$X$ is superreflexive. Thus the nonembeddability of the Laakso
graphs is a
 metric characterization of superreflexivity that is different from
 Bourgain's characterization~\eqref{eq:bourgain superreflexiv char}.

In addition to uniformly convex Banach spaces, other classes of
metric spaces for which Markov convexity has been computed include
Alexandrov spaces of nonnegative curvature~\cite{AN10} (they are
Markov $2$-convex) and the Heisenberg group (it is Markov
$4$-convex, as shown by Sean Li). Markov convexity has several
applications to metric geometry, including a characterization of
tree metrics that admit a bi-Lipschitz embedding into Euclidean
space~\cite{LNP09}, a polynomial time approximation algorithm to
compute the $\ell_p$ distortion of tree metrics~\cite{LNP09}, and
applications to the theory of Lipschitz
quotients~\cite{MN08-markov}.

\section{Metric smoothness?} Since a Banach space admits an equivalent
uniformly convex norm if and only if it admits an equivalent
uniformly smooth norm, Bourgain's characterization of
superreflexivity implies that, for every $k\ge 3$, a Banach space
$X$ admits an equivalent uniformly smooth norm if and only if
$\lim_{n\to \infty} c_X(T_n^k)=\infty$. Nevertheless, a subtlety of
this problem appears if one is interested in equivalent norms whose
modulus of uniform smoothness has a given power type. Specifically,
a Banach space $X$ admits an equivalent norm whose modulus of
uniform smoothness has power type $p$ if and only if $X^*$ admits an
equivalent norm whose modulus of uniform convexity has power type
$p/(p-1)$; this is an immediate consequence
of~\eqref{eq:lindenstrauss duality}. Despite this fact, and in
contrast to Theorem~\ref{thm:Mconv}, we do not know how to complete
the Ribe program for the property of admitting an equivalent norm
whose modulus of uniform smoothness has power type $p$. The presence
of Trees and Laakso graphs is a natural obstruction to uniform
convexity, but it remains open to isolate a natural (and useful)
family of metric spaces whose presence is an obstruction to uniform
smoothness of power type $p$.

\section{Bourgain's discretization problem}\label{sec:bourgain
discretization} Let $(X,\|\cdot\|_X)$ and $(Y,\|\cdot\|_Y)$ be
normed spaces with unit balls $B_X$ and $B_Y$, respectively. For
$\e\in (0,1)$ let $\d_{X\hookrightarrow Y}(\e)$ be the supremum over
those $\d\in (0,1)$ such that every $\d$-net $\mathcal N_\d$ in
$B_X$ satisfies $c_Y(\mathcal{N}_\d)\ge (1-\e) c_Y(X)$.
$\d_{X\hookrightarrow Y}(\cdot)$ is called the {\em discretization
modulus} corresponding to $X, Y$. Ribe's theorem follows from the
assertion that if $\dim(X)<\infty$ then $\d_{X\hookrightarrow
Y}(\e)>0$ for all $\e\in (0,1)$. This implication follows from the
classical observation~\cite{CK63} that uniformly continuous mappings
on Banach spaces are bi-Lipschitz for large distances, and a $w^*$
differentiation argument of  Heinrich and Mankiewicz~\cite{HM82};
see~\cite{GNS11} for the details.

In~\cite{Bou87} Bourgain found a new proof of Ribe's theorem that
furnished an explicit bound on $\d_{X\hookrightarrow Y}(\cdot)$.
Specifically, if $\dim(X)=n$ then
\begin{equation}\label{eq:Bourgain's discretization bound}
\forall\, \e\in (0,1),\quad \d_{X\hookrightarrow Y}(\e)\ge e^{-(n/\e)^{Cn}},
\end{equation}
where $C\in (0,\infty)$ is a universal constant.
\eqref{eq:Bourgain's discretization bound} should be viewed as a
quantitative version of Ribe's theorem, and it yields an abstract
and generic way to obtain a  family of finite metric spaces that
serve as obstructions whose presence  characterizes the failure of
any given isomorphic finite dimensional linear property of Banach
spaces; see~\cite{Ost12-forms,Ost12-test}.

In light of the Ribe program it would be of great interest to
determine the asymptotic behavior in $n$ of, say,
$\d_{X\hookrightarrow Y}(1/2)$. However, the
bound~\eqref{eq:Bourgain's discretization bound} remains the best
known estimate, while the known (simple) upper bounds on
$\d_{X\hookrightarrow Y}(1/2)$ decay like a power of $n$;
see~\cite{GNS11}. This question is of interest even when $X,Y$ are
restricted to certain subclasses of Banach spaces, in which the
following improvement is known~\cite{GNS11}: for all $p\in
[1,\infty)$ we have $\d_{X\hookrightarrow {L_p(\mu)}}(1/2)\gtrsim
(\dim(X))^{-5/2}$ (the implied constant is universal). We refer
to~\cite{GNS11} for a more general statement along these lines, as
well as to~\cite{LN12,HLN12} for alternative approaches to this
question.

\section{Nonlinear Dvoretzky theorems}\label{sec:dvo}

 A classical theorem of Dvoretzky~\cite{Dvo60} asserts, in confirmation of a conjecture of Grothendieck~\cite{Gro53-dvo},  that for every $k\in
\N$ and $D>1$ there exists $n=n(k,D)\in \N$ such that every
$n$-dimensional normed space has a $k$-dimensional linear subspace
that embeds into Hilbert space with distortion $D$;
see~\cite{Mil71,MS99,Sch06} for the best known bounds on $n(k,D)$.
In accordance with the Ribe program, Bourgain, Figiel and Milman
asked in 1986 if there is an analogue of the Dvoretzky phenomenon
which holds for general metric spaces. Specifically, they
investigated the largest $m\in \N$ such that {\em any} finite metric
space $(\M,d_\M)$ of cardinality $n$ has a subset $S\subseteq \M$
with $|S|\ge m$ such that the metric space $(S,d_\M)$ embeds with
distortion $D$ into Hilbert space. Twenty years later,  Tao asked an
analogous question in terms of Hausdorff dimension: given $\alpha>0$
and $D>1$, what is the supremum over those $\beta\ge 0$ such that
every compact metric space $\M$ with $\dim_H(\M)\ge \alpha$ has a
subset $S\subseteq \M$ with $\dim_H(\M)\ge \beta$ that embeds into
Hilbert space with distortion $D$? Here $\dim_H(\cdot)$ denotes
Hausdorff dimension.

A pleasing aspect of the Ribe program is that sometimes we get more
than we asked for. In our case, we asked for almost Euclidean subsets, but
the known answers to the above questions actually provide subsets
that are even more structured: they are approximately ultrametric.
Before describing these answers to the above questions, we therefore
first discuss the structure of ultrametric spaces, since this
additional structure is crucial for a variety of applications.

\subsection{The structure of ultrametric spaces}\label{sec:ult strcut} Let $(\M,d_\M)$ be
an ultrametric space, i.e.,
\begin{equation}\label{eq:ultratriangle}
\forall\, x,y,z\in \M,\quad d_\M(x,y)\le \max\left\{d_\M(x,z),d_\M(y,z)\right\}.
\end{equation}
In the discussion below, assume for simplicity that $\M$ is finite:
this case contains all the essential ideas, and the natural
extensions to infinite ultrametric spaces can be found in
e.g.~\cite{Hug04,MN11-skel,KMZ12}. Define an equivalence relation
$\sim$ on $\M$ by
$$
\forall\, x,y\in \M,\quad x\sim y\iff d_\M(x,y)<\diam(\M)=\max_{z,w\in \M} d_\M(z,w).
$$
Observe that it is the ultra-triangle
inequality~\eqref{eq:ultratriangle} that makes $\sim$ be indeed an
equivalence relation. Let $A_1,\ldots,A_k$ be the corresponding
equivalence classes. Thus $d_\M(x,y)<\diam(\M)$ if $(x,y)\in
\bigcup_{i=1}^k A_i\times A_i$ and $d_\M(x,y)=\diam(\M)$ if
$(x,y)\in \M\setminus \bigcup_{i=1}^k A_i\times A_i$.

By applying this construction to each equivalence class separately,
and iterating, one obtains a sequence of partitions
$\P_0,\ldots,\P_n$ of $\M$ such that $\P_0=\{\M\}$,
$\P_n=\{\{x\}\}_{x\in \M}$, and $\P_{i+1}$ is a refinement of $\P_i$
for all $i\in \{0,\ldots,n-1\}$. Moreover, for every $x,y\in \M$, if
we let $i\in \{0,\ldots,n\}$ be the maximal index such that $x,y\in
A$ for some $A\in \P_i$, then $d_\M(x,y)=\diam(A)$. Alternatively,
consider the following graph-theoretical tree whose vertices are
labeled by subsets of $\M$. The root is labeled by $\M$ and the
$i$th level of the tree is in one-to-one correspondence with the
elements of the partition $\P_i$. The descendants of an $i$ level
vertex whose label is  $A\in \P_i$ are declared to be the $i+1$
level vertices whose labels are $\{B\in \P_{i+1}:\ B\subseteq A\}$.
With this combinatorial picture in mind, $\M$ can be identified as
the leaves of the tree and the metric on $\M$ has the following
simple description: the distance between any two leaves is the
diameter of the set corresponding to their least common ancestor in
the tree. This simple combinatorial structure of ultrametric spaces
will be harnessed extensively in the ensuing discussion.
See~\cite{Hug04,MN11-skel,KMZ12} for an extension of this picture to
infinite compact ultrametric spaces (in which case the points of
$\M$ are in one-to-one correspondence with the ends of an infinite
tree).

We record two more consequences of the above discussion. First of
all, by considering the natural lexicographical order that is
induced on the leaves of the tree, we obtain a linear order $\prec$
on $\M$ such that if $x,y\in \M$ satisfy $x\preceq y$ then
\begin{equation}\label{eq:linear order}
\diam([x,y])=\diam(\{z\in \M:\ x\preceq z\preceq y\})=d_\M(x,y).
\end{equation}
See~\cite{KMZ12} for a proof of the existence of a linear order
satisfying~\eqref{eq:linear order} for every compact ultrametric
space $(\M,d_\M)$, in which case the order interval $[x,y]$ is
always a Borel subsets of $\M$.

The second consequence that we wish to record here is that
$(\M,d_\M)$ admits an isometric embedding into the sphere of radius
$\diam(\M)/\sqrt{2}$ of Hilbert space. This is easily proved by
induction on $\M$ as follows. Letting $A_1,\ldots,A_k$ be the equivalence
classes as above, by the induction hypothesis there exist isometric
embeddings $f_i:A_i\to H_i$, where $H_1,\ldots,H_k$ are Hilbert
spaces and $\|f_i(x)\|_{H_i}=\diam(A_i)/\sqrt{2}$ for all $i\in
\{1,\ldots,k\}$. Now define
$$
f:\M\to \left(\bigoplus_{i=1}^kH_i\right)\oplus \ell_2^k\eqdef H
$$
by
$$
x\in A_i\implies f(x)=f_i(x)+\sqrt{\frac{\diam(\M)^2-\diam(A_i)^2}{2}}e_i,
$$
where $e_1,\ldots,e_k$ is the standard basis of
$\ell_2^k=(\R^k,\|\cdot\|_2)$. One deduces directly from this
definition, and the fact that $d_\M(x,y)=\diam(\M)$ if $(x,y)\in
\M\setminus \bigcup_{i=1}^k A_i\times A_i$, that
$\|f(x)\|_H=\diam(\M)/\sqrt{2}$ for all $x\in \M$ and
$\|f(x)-f(y)\|_H=d_\M(x,y)$ for all $x,y\in \M$. See~\cite{VT79} for more information on Hilbertian isometric embeddings of ultrametric spaces.

Thus, the reader should keep the following picture in mind when
considering a finite ultrametric space $(\M,d_\M)$: it corresponds
to the leaves of a tree that are isometrically embedded in Hilbert
space. Moreover, for every node of the tree the distinct
subtrees that are rooted at its children are, after translation,
mutually orthogonal.

\subsection{Ultrametric spaces are ubiquitous}\label{sec:skel}

The following theorem is equivalent to the main result
of~\cite{MN11-ultra}, the original formulation of which will not be
stated here; the formulation below is due to~\cite{MN11-skel}.

\begin{theorem}[Ultrametric skeleton theorem]\label{thm:skel}
For every $\e\in (0,1)$ there exists $c_\e\in [1,\infty)$ with the
following property. Let $(\M,d_\M)$ be a compact metric space and
let $\mu$ be a Borel probability measure on $\M$. Then there exists
a compact subset $S\subseteq \M$ and a Borel probability measure
$\nu$ that is supported on $S$, such that $(S,d_\M)$ embeds into an
ultrametric space with distortion at most $9/\e$ and
\begin{equation}\label{eq:x,r}
\forall (x,r)\in \M\times [0,\infty),\quad \nu\left(B(x,r)\cap S\right)\le \left(\mu\left(B(x,c_\e r)\right)\right)^{1-\e}.
\end{equation}
\end{theorem}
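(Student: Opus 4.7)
The plan is to construct $S$ as the leaf set of a hierarchical partition tree on $\M$. Any such tree, with cell diameters decaying geometrically, automatically yields an ultrametric embedding of its leaves: assigning each internal node $v$ the height equal to the diameter of its cell, the resulting tree metric on leaves is an ultrametric, and its distortion against $d_\M$ is controlled by the geometric ratio between successive levels. Fix $\beta = \beta(\e) \in (0,1)$ (of order $\e$), let $R = \diam(\M)$, and set $r_i = \beta^i R$. The tree I would build has level-$i$ cells of diameter at most $r_i$, with any two cells at level $i$ sharing a parent being separated by at least $\rho \, r_{i-1}$ for some $\rho = \rho(\e)$; calibrating $\beta$ and $\rho$ yields the target distortion $9/\e$.

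The cells are built top-down via a weighted greedy procedure. Given a level-$i$ cell $C$ with assigned center $x_C$, pick points $y_1, y_2, \ldots$ in $C$ greedily by maximizing $\mu(B(y_j, c\, r_{i+1}))$, with $c = c(\e)$, letting the $j$th sub-cell be $C_j := C \cap B(y_j, r_{i+1})$ and removing a buffer of radius $(1+\rho) r_{i+1}$ around each $y_j$ from the remaining pool before continuing. Each sub-cell carries weight $w(C_j) := \mu(B(y_j, c\, r_{i+1}))^{1-\e}$, and $\nu$ is defined as the weak-$*$ subsequential limit of $\sum_{C \in \P_i} w(C)\, \delta_{x_C}$, supported on the compact set $S$ of accumulation points of the centers.

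The key inequality $\nu(B(x,r) \cap S) \le \mu(B(x, c_\e r))^{1-\e}$ is verified at each scale $r \in [r_{i+1}, r_i]$. By the separation property, $B(x, r)$ meets at most $N = N(\e)$ level-$i$ cells, and each corresponding ball $B(y_j, c\, r_i)$ lies in $B(x, c_\e r)$; summing the weights and using concavity of $t \mapsto t^{1-\e}$ via Jensen gives an upper bound $N^{\e} \mu(B(x, c_\e r))^{1-\e}$, and $N^{\e}$ is absorbed into $c_\e$. The main obstacle is the complementary inductive invariant
$$
\sum_{j} w(C_j) \;\gtrsim\; \mu(B(x_C, c\, r_i))^{1-\e},
$$
which keeps $\nu$ nontrivial across scales. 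This uses greedy maximality: any $z \in C$ purged by the $y_j$-buffer satisfies $\mu(B(z, c\, r_{i+1})) \le \mu(B(y_j, c\, r_{i+1}))$, and a Vitali-type covering argument combined with the subadditivity $(\sum a_j)^{1-\e} \le \sum a_j^{1-\e}$ (which is where the exponent $1-\e$ rather than $1$ is essential) yields the invariant. Calibrating $\beta, \rho, c$ against $\e$ so that the resulting distortion is precisely $9/\e$, rather than some looser constant, is the delicate combinatorial heart of the construction.
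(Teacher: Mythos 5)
The paper does not itself prove Theorem~\ref{thm:skel}; it quotes the result from~\cite{MN11-ultra} (reformulated in~\cite{MN11-skel}), so there is no internal proof to match against and your argument has to stand on its own. The high-level frame — a hierarchy of separated cells at geometric scales, yielding an ultrametric embedding, with weights governed by $\mu$-masses of inflated balls and a weak-$*$ limit — is the right one and close in spirit to what Mendel and Naor actually do, and you correctly identify that the exponent $1-\e$ is what makes subadditivity available. But the measure bookkeeping, which you flag as the ``delicate heart,'' is exactly where your sketch has a real gap.

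Take your inductive invariant $\sum_j w(C_j)\gtrsim w(C)$ with $w(C_j)=\mu(B(y_j,c r_{i+1}))^{1-\e}$, so $w(C)=\mu(B(x_C,c r_i))^{1-\e}$. The greedy-plus-buffer construction does give the coverage $C\subseteq\bigcup_j B(y_j,c r_{i+1})$ (once $c\ge 1+\rho$), and subadditivity of $t\mapsto t^{1-\e}$ then yields $\sum_j w(C_j)\ge\mu(C)^{1-\e}$ — with no Vitali or doubling argument needed, so that part of your reasoning is fine. But $\mu(C)^{1-\e}$ is not $\mu(B(x_C,c r_i))^{1-\e}$. The ball $B(x_C,c r_i)$ sticks out of $C$ by the factor $c$, and nothing in the construction at the parent level bounds the $\mu$-mass of $B(x_C,c r_i)\setminus C$: a cell with negligible $\mu(C)$ can border a region of enormous density, making $w(C)$ huge while $\sum_j w(C_j)$ stays tiny. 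So the invariant, as written, fails. This is not a cosmetic mismatch — it is the central tension of the problem: the upper bound~\eqref{eq:x,r} forces you to peg weights to inflated balls, while conserving mass down the tree wants them pegged to the cells themselves, and in a general compact metric space these two quantities are not comparable.

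Even if one salvages an invariant of the form $\sum_j w(C_j)\ge\theta\,w(C)$ for some fixed $\theta<1$, the construction runs over infinitely many scales, so the total mass $\sum_{C\in\P_i}w(C)$ decays geometrically in $i$ and the weak-$*$ limit is the zero measure, not a probability measure. You need either exact conservation ($\sum_j w(C_j)=w(C)$, which your weights do not satisfy) or per-scale losses that are summable, and a plain greedy buffer argument delivers neither. There is also an unresolved tension in the cell definition: taking $C_j=C\cap B(y_j,r_{i+1})$ gives separation but then $\bigcup_j C_j$ omits the buffer annuli, so the cells do not cover $C$; enlarging the cells to cover destroys separation, and the weight-vs-cell mismatch you use to paper over this is exactly what breaks the invariant. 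These issues are precisely why the argument in~\cite{MN11-ultra} is substantially more involved than a top-down greedy partition, and they are the part a proof of this theorem cannot wave past.
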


The subset $S\subseteq \M$ of Theorem~\ref{thm:skel} is called an
{\em ultrametric skeleton} of $\M$ since, as we shall see below and
is explained further in~\cite{MN11-skel}, it must be ``large" and
``spread out", and, more importantly, its main use is to deduce
global information about the initial metric space $(\M,d_\M)$.

By Theorem~\ref{thm:skel} we know that despite the fact that
ultrametric spaces have a very restricted structure, every metric measure
space has an ultrametric skeleton. We will now describe several
consequences of this fact. Additional examples of consequences of
Theorem~\ref{thm:skel}  are contained in
Sections~\ref{sec:majorizing}, \ref{sec:keleti}, \ref{sec:oracle}
below.

Our first order of business is to relate Theorem~\ref{thm:skel} to
the above nonlinear Dvoretzky problems. Theorem~\ref{thm:skel} was discovered in the context of
investigations on nonlinear Dvoretzky theory, and as such it
constitutes another example of a metric space phenomenon that was
uncovered due to the Ribe program.

\begin{theorem}\label{thm:finite dvo}
For every $\e\in (0,1)$ and $n\in \N$, any $n$-point metric space has a subset of
size at least $n^{1-\e}$ that embeds into an ultrametric space with
distortion $O(1/\e)$.
\end{theorem}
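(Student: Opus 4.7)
The plan is to derive Theorem~\ref{thm:finite dvo} as an essentially immediate corollary of the Ultrametric Skeleton Theorem (Theorem~\ref{thm:skel}), by applying that theorem to the uniform probability measure on the given $n$-point metric space. The key insight is that the measure-theoretic ``largeness'' condition~\eqref{eq:x,r} in Theorem~\ref{thm:skel} is powerful enough to translate into a cardinality lower bound when the reference measure $\mu$ is the normalized counting measure.

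Concretely, given an $n$-point metric space $(\M,d_\M)$, I would let $\mu$ be the uniform probability measure on $\M$, i.e., $\mu(\{x\})=1/n$ for every $x\in \M$. Applying Theorem~\ref{thm:skel} with this $\mu$ and the prescribed $\e\in(0,1)$ yields a subset $S\subseteq \M$ and a probability measure $\nu$ supported on $S$ such that $S$ embeds into an ultrametric space with distortion $9/\e = O(1/\e)$, and such that
\begin{equation}\label{eq:skel instance}
\nu\left(B(x,r)\cap S\right)\le \left(\mu\left(B(x,c_\e r)\right)\right)^{1-\e} \qquad \forall (x,r)\in \M\times[0,\infty).
\end{equation}

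The next step is to specialize~\eqref{eq:skel instance} to $r=0$ at an arbitrary point $y\in S$. Recalling from the excerpt that $B(z,\rho)=\{w\in \M : d_\M(z,w)\le \rho\}$, we have $B(y,0)=\{y\}$, and therefore
$$
\nu(\{y\}) \;=\; \nu\!\left(B(y,0)\cap S\right) \;\le\; \left(\mu(\{y\})\right)^{1-\e} \;=\; n^{-(1-\e)}.
$$
Summing over $y\in S$ and using that $\nu$ is a probability measure supported on $S$ gives
$$
1 \;=\; \nu(S) \;=\; \sum_{y\in S}\nu(\{y\}) \;\le\; |S|\cdot n^{-(1-\e)},
$$
which rearranges to $|S|\ge n^{1-\e}$, completing the proof.

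There is essentially no obstacle within this deduction itself; the entire difficulty has been offloaded to Theorem~\ref{thm:skel}, whose proof is the genuinely hard step (and is not attempted here). One mild point of care is that the theorem is stated for \emph{compact} metric spaces with Borel probability measures, but a finite metric space equipped with the uniform counting measure trivially satisfies these hypotheses. The only conceptually delicate aspect is recognizing that Theorem~\ref{thm:skel} in fact contains the nonlinear Dvoretzky statement as a special case: the measure-theoretic ``spreading'' inequality~\eqref{eq:x,r} plays the dual roles of guaranteeing that $S$ is not too concentrated (which, for uniform $\mu$, forces $|S|$ to be large) and, simultaneously, that $S$ has the desired ultrametric structure.
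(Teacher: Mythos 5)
Your proof is correct and follows essentially the same route as the paper: apply Theorem~\ref{thm:skel} to the uniform measure on the $n$-point space and read off the cardinality bound from the $r=0$ case of~\eqref{eq:x,r}. The only cosmetic difference is that you sum $\nu(\{y\})$ over $y\in S$ to conclude $1\le |S|\,n^{-(1-\e)}$, whereas the paper invokes pigeonhole to pick a single $x\in S$ with $\nu(\{x\})\ge 1/|S|$; these are the same estimate.
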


\begin{proof}
This is a simple corollary of the ultrametric skeleton theorem,
which does not use its full force. Specifically, right now we will
only care about the case $r=0$ in~\eqref{eq:x,r}, though later we
will need~\eqref{eq:x,r} in its entirety. So, let $(\M,d_\M)$ be an
$n$-point metric space and let $\mu$ be the uniform probability
measure on $\M$. An application of Theorem~\ref{thm:skel} to the
metric measure space $(\M,d_\M,\mu)$ yields an ultrametric skeleton
$(S,\nu)$. Thus $(S,d_\M)$ embeds into an ultrametric space with
distortion $O(1/\e)$. Since $\nu$ is a probability measure that is
supported on $S$, there must exist a point  $x\in S$ with
$\nu(\{x\})\ge 1/|S|$. By~\eqref{eq:x,r} (with $r=0)$ we have
$\nu(\{x\})\le \mu(\{x\})^{1-\e}=1/n^{1-\e}$. Thus $|S|\ge
n^{1-\e}$,
\end{proof}

Theorem~\ref{thm:finite dvo} was first proved in~\cite{MN07}, as a
culmination of the investigations
in~\cite{BFM86,KKR94,BKRS00,BBM,BLMN05}. The best known bound
for this problem is due to~\cite{NT10}, where it is shown that if
$\e\in (0,1)$ then any $n$-point metric space has a subset of size
$n^{1-\e}$ that embeds into an ultrametric space with distortion at
most
\begin{equation}\label{eq:NT distortion}
D(\e)=\frac{2}{\e(1-\e)^{\frac{1-\e}{\e}}}.
\end{equation}

Theorem~\ref{thm:finite dvo} belongs to the nonlinear Dvoretzky
framework of Bourgain, Figiel and Milman because we have seen that ultrametric spaces admit an isometric embedding into Hilbert space. Moreover, the following matching impossibility result was proved in~\cite{BLMN05}.

\begin{theorem}\label{thm:expander lower}
There exist  universal constants $K,\kappa\in (0,\infty)$ and for
every $n\in \N$ there exists an $n$-point metric space
$(\M_n,d_{\M_n})$ such that for every $\e\in (0,1)$ we have
$$
\forall\, S\subseteq \M_n, \quad |S|\ge Kn^{1-\e}\implies c_{\ell_2}(S,d_{\M_n})\ge \frac{\kappa}{\e}.
$$
\end{theorem}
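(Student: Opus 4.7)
The natural candidates for $\M_n$ are shortest-path metrics on $n$-vertex $d$-regular expanders $G_n$ (for instance Lubotzky-Phillips-Sarnak Ramanujan graphs, or random $d$-regular graphs), because expanders combine two features tailored to this problem: sharp volume growth $|B(x,r)| \le d^{r+1}$, which forces any large subset to be metrically spread out, and a Poincar\'e-type inequality coming from a spectral gap bounded away from zero, which is the standard obstruction to embeddings into Hilbert space. I would fix such a $G_n$ with constant normalised spectral gap $\lambda_0 > 0$ and set $\M_n = (V(G_n), d_{G_n})$.

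Given a subset $S \subseteq V(G_n)$ with $|S| \geq K n^{1-\e}$ and a hypothetical non-expansive embedding $f : S \to \ell_2$ with distortion $D$, I would first extract a \emph{lower bound} on average squared Hilbertian distances in $f(S)$. The volume estimate $|B(x,r)| \le d^{r+1}$ implies that for each $x \in S$, at most $d^{r+1}$ points of $S$ lie within graph-distance $r$ of $x$; comparing with $|S| = K n^{1-\e}$ shows that at least $|S|/2$ of the points in $S$ are at distance $\geq (1-\e)\log_d n - O_K(1)$ from $x$. Thus $\mathrm{avg}_{x, y \in S} d_{G_n}(x,y)^2 \gtrsim (\log n)^2$, and the distortion assumption gives $\mathrm{avg}_{x, y \in S} \|f(x) - f(y)\|^2 \gtrsim ((1-\e)\log n / D)^2$.

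Second, I would produce a matching \emph{upper bound} on the same quantity from the expander's spectral gap. Extend $f$ to $\tilde f: V(G_n) \to \ell_2$ by $\tilde f(v) = f(\pi(v))$ where $\pi(v) \in S$ is a nearest point. The expander Poincar\'e inequality gives
\begin{equation*}
\E_{u, v \in V} \|\tilde f(u) - \tilde f(v)\|^2 \leq \frac{C}{\lambda_0}\, \E_{(u,v) \in E(G_n)} \|\tilde f(u) - \tilde f(v)\|^2,
\end{equation*}
and since $\tilde f = f \circ \pi$, the left side dominates $\mathrm{avg}_{x,y \in S} \|f(x) - f(y)\|^2$ (up to a factor controlled by how $\pi$ distributes the mass on $V$ among points of $S$). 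The right side, in turn, depends on how far points of $V$ are from $S$: for every edge $(u,v)$, $\|\tilde f(u) - \tilde f(v)\| \le d_{G_n}(\pi(u), \pi(v)) \le 1 + 2R$, where $R = \max_v d_{G_n}(v, S)$.

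The main obstacle is calibrating this second step so that the edge contribution is of order $(\e \log n)^2$ rather than $(\log n)^2$: the covering radius $R$ can \emph{a priori} be as large as $\Theta(\log n)$ when $S$ is highly concentrated, which would dilute the estimate into the trivial bound $D \gtrsim 1$. Resolving this requires a multi-scale argument tuned to the ``$\e$-scale''. The starting observation is that since $|S| \geq K n^{1-\e}$, a packing count using $|B(x, r)| \le d^{r+1}$ forces, at scale $r \sim \e \log_d n$, a positive fraction of the $\sim n^{1-\e}$ balls of radius $r$ in $G_n$ to contain a point of $S$. Coarsening $G_n$ to a graph on these ``occupied'' balls inherits a spectral gap of the same order, and applying the Poincar\'e inequality at this coarsened level bounds the coarse-edge contribution by $(\e \log n)^2$ (the squared diameter of a coarse ball), while typical coarse distances remain $\gtrsim (1-\e)\log n$ by the first step. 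Combining the two bounds yields $(1-\e)\log n / D \lesssim \e \log n$, hence $D \gtrsim (1-\e)/\e$, and taking $\e$ bounded away from $1$ (say $\e \leq 1/2$, with the range $\e > 1/2$ being trivial by choosing $\kappa$ small) produces the desired universal $\kappa$.
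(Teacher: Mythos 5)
Your choice of a constant-degree expander for $\M_n$ and the two-sided strategy --- volume growth for a lower bound on typical distances in $S$, Poincar\'e for an upper bound on the Hilbertian energy after extending $f$ by nearest-point projection --- is the right skeleton; a genuinely spectral estimate is needed, since the Markov-type bound~\eqref{eq:ramsey girth} from the paper only produces $c_{\ell_2}(S)\gtrsim\sqrt{g/(1+\log_k(|V|/|S|))}\asymp 1/\sqrt{\e}$. However, the multi-scale fix you propose for the covering-radius problem has two real flaws. The packing count is wrong: at scale $r\asymp\e\log_d n$ there are $\asymp n^{1-\Theta(\e)}$ roughly disjoint cells, each of size $\asymp n^{\Theta(\e)}$, and a set $S$ of size $Kn^{1-\e}$ can occupy as few as $\asymp|S|/n^{\Theta(\e)}$ of them, so the fraction of occupied cells is only guaranteed to be $\ge n^{-\Theta(\e)}$ --- a vanishing, not positive, fraction. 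And the claim that a coarsened graph on the occupied cells ``inherits a spectral gap of the same order'' is unjustified and false in general (quotients of expanders need not be expanders); moreover the occupied cells are not a partition of $V$ in your setup, so the coarsened graph is not even well defined.

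The step that is actually needed is both different and simpler: one must bound the \emph{average} of $d_{G_n}(v,S)^2$, not the maximum $R$ (which can legitimately be $\Theta(\log n)$), and the tool is vertex expansion rather than the volume upper bound $|B(x,r)|\le d^{r+1}$. Writing $A_r=\{v\in V : d_{G_n}(v,S)>r\}$, expansion gives $|B(S,r)|\ge\min\{n/2,(1+c\gamma)^r|S|\}$, hence $|A_{r_0}|\le n/2$ for some $r_0\lesssim_\gamma 1+\e\log n$, and applying expansion to the shrinking complements $A_r$ (note $A_{r+1}$ together with its outer vertex boundary lies inside $A_r$) forces the geometric decay $|A_r|\le\tfrac n2(1+c\gamma)^{r_0-r}$ for $r\ge r_0$. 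Summing yields $\frac1n\sum_{v\in V}(1+d_{G_n}(v,S))^2\lesssim_{\gamma,d}(1+\e\log n)^2$, which is exactly the bound required for the edge term in the Poincar\'e inequality. You also need to repair the left-hand side: $\frac1{n^2}\sum_{u,v}\|\tilde f(u)-\tilde f(v)\|^2$ does \emph{not} dominate $\mathrm{avg}_{x,y\in S}\|f(x)-f(y)\|^2$ in general, because the pushforward of the uniform measure under $\pi$ can be very lopsided on $S$; instead lower-bound $\frac1{n^2}\sum_{u,v}d_{G_n}(\pi(u),\pi(v))^2$ directly, combining $d_{G_n}(\pi(u),\pi(v))\ge d_{G_n}(u,v)-d_{G_n}(u,S)-d_{G_n}(v,S)$ with the degree bound $d_{G_n}(u,v)\ge\log_{d-1}n-O_d(1)$ for at least half the pairs and with Markov's inequality applied to the average estimate above. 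This gives a $\gtrsim(\log n)^2$ lower bound once $\e\le\e_0(d,\gamma)$, which suffices since for $\e>\e_0$ one takes $\kappa\le\e_0$ and uses the trivial $c_{\ell_2}(S)\ge 1$.
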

In addition to showing that Theorem~\ref{thm:finite dvo}, and hence also Theorem~\ref{thm:skel}, is asymptotically sharp, Theorem~\ref{thm:expander lower} establishes that, in general, the best way (up to
constant factors) to find a large approximately Euclidean subset is to actually find a subset satisfying the more stringent requirement of being almost ultrametric.

Turning to the Hausdorff dimensional nonlinear Dvoretzky problem, we have the following consequence of the ultrametric skeleton theorem due to~\cite{MN11-ultra}.

\begin{theorem}\label{thm:hausdorff}
For every $\e\in (0,1)$ and $\alpha\in (0,\infty)$, any compact metric space of Hausdorff dimension greater than $\alpha$ has a closed subset of Hausdorff dimension greater than $(1-\e)\alpha$ that embeds into an ultrametric space with distortion $O(1/\e)$.
\end{theorem}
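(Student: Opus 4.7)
The plan is to deduce Theorem~\ref{thm:hausdorff} from the ultrametric skeleton theorem (Theorem~\ref{thm:skel}) by feeding it a Frostman measure and reading off the Hausdorff dimension of the resulting skeleton via the mass distribution principle. Let $(\M,d_\M)$ be a compact metric space with $\dim_H(\M)>\alpha$, and fix $\e\in(0,1)$.

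First, I would choose auxiliary parameters. Since $\dim_H(\M)>\alpha$, pick some $\alpha'$ with $\alpha<\alpha'<\dim_H(\M)$, and then pick $\e'\in(0,\e)$ so that $(1-\e')\alpha'>(1-\e)\alpha$; this is possible because the two inequalities are strict and the constraint becomes loose as $\e'\to 0$ and $\alpha'$ is kept slightly above $\alpha$. By the standard Frostman lemma (valid in any compact metric space), since $\alpha'<\dim_H(\M)$ there exists a nonzero Borel probability measure $\mu$ supported on $\M$ and a constant $C\in(0,\infty)$ such that
\begin{equation*}
\forall\,(x,r)\in\M\times[0,\infty),\qquad \mu(B(x,r))\le Cr^{\alpha'}.
\end{equation*}

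Next, I would apply Theorem~\ref{thm:skel} to the metric measure space $(\M,d_\M,\mu)$ with parameter $\e'$ to obtain a compact subset $S\subseteq\M$ and a Borel probability measure $\nu$ supported on $S$ such that $(S,d_\M)$ embeds into an ultrametric space with distortion at most $9/\e'=O(1/\e)$ and
\begin{equation*}
\nu(B(x,r)\cap S)\le \mu(B(x,c_{\e'}r))^{1-\e'}\le \bigl(C(c_{\e'}r)^{\alpha'}\bigr)^{1-\e'}=C'\,r^{(1-\e')\alpha'}
\end{equation*}
for every $(x,r)\in\M\times[0,\infty)$, where $C'=C'(\e,\alpha,\alpha')\in(0,\infty)$.

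Finally, I would invoke the mass distribution principle: a compact set $S$ carrying a nonzero Borel measure $\nu$ with $\nu(B(x,r))\lesssim r^s$ uniformly in $x$ satisfies $\mathcal{H}^s(S)\gtrsim \nu(S)>0$, hence $\dim_H(S)\ge s$. Applied with $s=(1-\e')\alpha'$ this yields $\dim_H(S)\ge(1-\e')\alpha'>(1-\e)\alpha$, which is exactly the conclusion, and $S$ is closed because it is compact. There is no serious obstacle; the only delicate point is the parameter juggling needed to turn the two ``$>$''s in the statement into usable strict inequalities, and this is resolved by the choice of the intermediate $\alpha'$ and $\e'$ at the outset. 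The real content is already packaged inside Theorem~\ref{thm:skel}: the doubly dualized radius bound~\eqref{eq:x,r} is precisely the Frostman-type condition needed to transfer a lower bound on Hausdorff dimension from $\M$ to $S$ with an arbitrarily small loss.
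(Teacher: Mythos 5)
Your argument is correct and is essentially the paper's proof: apply the Frostman lemma to obtain a measure with $\mu(B(x,r))\le Kr^{\alpha}$, feed it into Theorem~\ref{thm:skel}, and use the resulting decay bound~\eqref{eq:x,r} on $\nu$ to lower-bound $\dim_H(S)$ (the paper does this by a direct covering estimate rather than citing the mass distribution principle, but the computation is the same). Your intermediate exponent $\alpha'$ is a worthwhile refinement to get the strict inequality $\dim_H(S)>(1-\e)\alpha$ that the statement literally asserts (the paper's proof only records $\dim_H(S)\ge(1-\e)\alpha$), while the auxiliary $\e'$ is redundant--once $\alpha'>\alpha$ is fixed, applying the skeleton theorem with $\e$ itself already gives $\dim_H(S)\ge(1-\e)\alpha'>(1-\e)\alpha$ with distortion $9/\e$.
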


\begin{proof}
Let $(\M,d_\M)$ be a compact metric space with $\dim_H(\M)>\alpha$.
By the Frostman lemma (see~\cite{How95,Mattila}) it follows that
there exists a Borel probability measure $\mu$ on $\M$ and  $K\in
(0,\infty)$ such that
\begin{equation}\label{eq:frostman condition}
\forall (x,r)\in \M\times [0,\infty),\quad \mu(B(x,r))\le Kr^\alpha.
\end{equation}
An application of Theorem~\ref{thm:skel} to the metric measure space
$(\M,d_\M,\mu)$ yields an ultrametric skeleton $(S,\nu)$. If
$\{B(x_i,r_i)\}_{i=1}^\infty$ is a collection of balls that covers
$S$ then
\begin{multline*}
 1=\nu(S)=\nu\left(\bigcup_{i=1}^\infty
B(x_i,r_i)\right)\le
\sum_{i=1}^\infty\nu(B(x_i,r_i))\\\stackrel{\eqref{eq:x,r}}{\le}
\sum_{i=1}^\infty\mu\left(B(x_i,c_\e
r_i)\right)^{1-\e}\stackrel{\eqref{eq:frostman condition}}{\le}
K^{1-\e}c_\e^{(1-\e)\alpha}\sum_{i=1}^\infty r_i^{(1-\e)\alpha}.
\end{multline*}
Having obtained an absolute positive lower bound on
$\sum_{i=1}^\infty r_i^{(1-\e)\alpha}$ for all the covers of $S$ by
balls $\{B(x_i,r_i)\}_{i=1}^\infty$, we conclude the desired
dimension lower bound $\dim_H(S)\ge (1-\e)\alpha$.
\end{proof}

\begin{remark}\label{re:hausdorff sharpness}
{\em It is also proved in~\cite{MN11-ultra} that there is a
universal constant $\kappa\in (0,\infty)$ such that for every
$\alpha>0$ and $\e\in (0,1)$ there exists a compact metric space
$(\M,d_\M)$ with $\dim_H(\M)=\alpha$ such that
$$
\forall\, S\subseteq
\M,\quad  \dim_H(S)\ge (1-\e)\alpha \implies c_{\ell_2}(S,d_\M)\ge
\frac{\kappa}{\e}.
$$
 Therefore, as in the case of the nonlinear Dvoretzky
problem for finite metric spaces, the question of finding in a
general metric space a high-dimensional subset which is
approximately Euclidean is the same (up to constants) as the
question of finding a high-dimensional subset which is approximately
an ultrametric space. This phenomenon helps explain how
investigations that originated in Dvoretzky's theorem led to a
theorem such as~\ref{thm:skel} whose conclusion seems to be far from
its initial Banach space motivation: the Ribe program indicated a natural question to
ask, but the answer itself turned out to be a truly nonlinear
phenomenon involving subsets which are approximately ultrametric
spaces; a (perhaps unexpected) additional feature that is more
useful than just the extraction of approximately Euclidean
subsets.}
\end{remark}

\begin{remark}\label{rem:small distortion}
{\em As mentioned above, the best known distortion bound in
Theorem~\ref{thm:finite dvo} is given in~\eqref{eq:NT distortion}.
When $\e\to 1$ this bound tends to $2$ from above. Distortion $2$ is
indeed a barrier here: the nonlinear Dvoretzky problem exhibits a
phase transition at distortion $2$ between power-type and
logarithmic behavior of the largest Euclidean subset that can be
extracted in general metric spaces of cardinality $n$. This
phenomenon was discovered in~\cite{BLMN05}; see
also~\cite{BLMN05-dic,BLMN05-low,CK05} for related threshold
phenomena. In their original paper~\cite{BFM86} that introduced the
nonlinear Dvoretzky problem, Bourgain Figiel and Milman proved that
for every $D>1$ any $n$-point metric space has a subset of size at
least $c(D)\log n$ that embeds with distortion $D$ into Hilbert
space. They also proved that there exists constants $D_0=1.023...$,
$\kappa\in (0,\infty)$ and for every $n\in \N$ there exists an
$n$-point metric space $(\M_n,d_{\M_n})$ such that every $S\subseteq
\M_n$ with $|S|\ge \kappa \log n$ satisfies
$c_{\ell_2}(S,d_{\M_n})\ge D_0$. In~\cite{BLMN05} this impossibility
result was extended to any distortion in $(1,2)$, thus establishing
the above phase transition phenomenon. The asymptotic behavior of
the nonlinear Dvoretzky problem at distortion $D=2$ remains unknown.
For the Hausdorff dimensional version of this question the phase
transition at distortion $2$ becomes more extreme: for every $\d\in
(0,1/2)$ one can obtain~\cite{MN11-ultra} a version of
Theorem~\ref{thm:hausdorff} with the resulting subset $S$ having
ultrametric distortion $2+\d$ and $\dim_H(S)\gtrsim
\frac{\d}{\log(1/\d)}\alpha$. In contrast,  for every $\alpha\in
(0,\infty)$ there exists~\cite{MN11-ultra} a compact metric space
$(\M,d_\M)$ of Hausdorff dimension $\alpha$ such that if $S\subseteq
\M$ embeds into Hilbert space with distortion strictly smaller than
$2$ then $\dim_H(S)=0$.}
\end{remark}

\section{Examples of applications}\label{sec:apps}

Several applications of the Ribe program have already been discussed
throughout this article. In this section we describe some additional
 applications of this type. We purposefully chose examples of
applications to areas which are far from Banach space theory, as an
indication of the relevance of the Ribe program to a variety of
fields.

\subsection{Majorizing measures}\label{sec:majorizing}

A (centered) Gaussian process is a family of random variables
$\{G_x\}_{x\in X}$, where $X$ is an abstract index set and for every
$x_1,\ldots,x_n\in X$ and $s_1,\ldots,s_n\in \R$ the random variable
$\sum_{i=1}^ns_iG_{x_i}$ is a mean zero Gaussian random variable. To
avoid technicalities that will obscure the key geometric ideas we
will assume throughout the ensuing discussion that $X$ is finite.

Given a  centered Gaussian process $\{G_x\}_{x\in X}$, it is of
great interest to compute (or estimate up to constants) the quantity
$\E\left[\max_{x\in X} G_x\right]$. The process induces the metric
$d(x,y)=\sqrt{\E\left[(G_x-G_y)^2\right]}$ on $X$, and this metric
determines $\E\left[\max_{x\in X} G_x\right]$. Indeed,  if
$X=\{x_1,\ldots,x_n\}$ then consider the $n$ by $n$
 matrix $D=(d(x_i,x_j)^2)$ and observe that  $D$
 is negative semidefinite on the subspace $\left\{x\in \R^n:\
\sum_{i=1}^nx_i=0\right\}$ of $\R^n$. Then,
\begin{multline*}\label{eq:formula}
\E\left[\max_{i\in \{1,\ldots,n\}}
G_{x_i}\right]\\=\frac{1}{(2\pi)^{n/2}} \int_{\left\{x\in \R^n:\
\sum_{i=1}^nx_i=0\right\}} \left(\max_{i\in \{1,\ldots,n\}}
\left(\sqrt{-D}x\right)_i\right)e^{-\frac12\|x\|_2^2}dx.
\end{multline*}
More importantly, $\E\left[\max_{x\in X} G_x\right]$ is well-behaved
under bi-Lipschitz deformations of $(X,d)$: by the classical Slepian
lemma (see e.g.~\cite{Fer74,Tal87}), if $\{G_x\}_{x\in X}$ and
$\{H_x\}_{x\in X}$ are  Gaussian processes satisfying
$$\alpha \sqrt{\E\left[(G_x-G_y)^2\right]}\le
\sqrt{\E\left[(H_x-H_y)^2\right]}\le \beta
\sqrt{\E\left[(G_x-G_y)^2\right]}$$ for all $x,y\in X$, then
$$\alpha\E\left[\max_{x\in X} G_x\right]\le \E\left[\max_{x\in X}
H_x\right]\le \beta\E\left[\max_{x\in X} G_x\right].$$

These facts suggest that one could ``read" the value of
$\E\left[\max_{x\in X} G_x\right]$ (up to universal constant
factors) from the geometry of the metric space $(X,d)$. How to do
this explicitly has been a long standing mystery until Talagrand
proved~\cite{Tal87} in 1987 his celebrated {\em majorizing measure
theorem}, which solved this question and, based on his
investigations over the ensuing two decades, led to a systematic
geometric method to estimate $\E\left[\max_{x\in X} G_x\right]$,
with many important applications (see the
books~\cite{LT91,Tal05,Tal11} and the references therein). We will
now explain the majorizing measure theorem itself, and how it is a
consequence of the ultrametric skeleton theorem; this deduction is
due to~\cite{MN11-skel}.

For a finite metric space $(X,d)$ let $\Prob(X)$ denote the space of
all probability measures on $X$. Consider the quantity
$$
\gamma_2(X,d)=\inf_{\mu\in \Prob(X)}\sup_{x\in X} \int_0^\infty \sqrt{\log\left(\frac{1}{\mu(B(x,r))}\right)}dr.
$$
The parameter $\gamma_2(X,d)$ should be viewed as a Gaussian version
of a covering number. Indeed, the integral $\int_0^\infty
\sqrt{\log\left(1/\mu(B(x,r))\right)}dr$ is large if $\mu$ has a
small amount of mass near $x$, so $\gamma_2(X,d)$ measures the
extent to which one can spread unit mass over $X$ so that all the
points are ``close" to this mass distribution in the sense that
$\max_{x\in X} \int_0^\infty
\sqrt{\log\left(1/\mu(B(x,r))\right)}dr$ is as small as possible.

Fernique introduced $\gamma_2(X,d)$ in~\cite{Fer74}, where he proved
that every Gaussian process $\{G_x\}_{x\in X}$ satisfies
$\E\left[\sup_{x\in X}G_x\right]\lesssim \gamma_2(X,d)$. Under
additional assumptions, he also obtained a matching lower bound
$\E\left[\sup_{x\in X}G_x\right]\gtrsim \gamma_2(X,d)$. Notably,
Fernique proved in 1975 (see~\cite{Fer76} and
also~\cite[Thm.~1.2]{Fer78}) that if the metric
$d(x,y)=\sqrt{\E\left[(G_x-G_y)^2\right]}$ happens to be an
ultrametric then $\E\left[\sup_{x\in X}G_x\right]\asymp
\gamma_2(X,d)$. By the Slepian lemma, the same conclusion holds true
also if $(X,d)$ embeds with distortion $O(1)$ into an ultrametric
space.

It is simple to see how the ultrametric structure is relevant to
such probabilistic considerations: in Section~\ref{sec:ult strcut}
we explained that an ultrametric space can be represented as a
subset of Hilbert space corresponding to leaves of a tree in which
the subtrees rooted at a given vertex are mutually orthogonal. In
the setting of Gaussian processes orthogonality is equivalent to
(stochastic) independence, so the geometric assumption of
ultrametricity in fact has strong probabilistic ramifications.
Specifically, the problem reduces to the estimation of the expected
supremum of the following special type of Gaussian process, indexed
by leaves of a graph theoretical tree $T=(V,E)$: to each edge $e\in
E(T)$ we associated a mean zero Gaussian random variable $H_e$, the
variables $\{H_e\}_{e\in E}$ are independent, and for every leaf $x$
we have $G_x=\sum_{e\in E(P_x)} H_e$, where $P_x$ is the unique path
joining $x$ and the root of $T$. This additional independence that
the ultrametric structure provides allowed Fernique to directly prove that
$\E\left[\sup_{x\in X}G_x\right]\gtrsim \gamma_2(X,d)$.

Due in part to the above evidence, Fernique conjectured in 1974 that
$\E\left[\sup_{x\in X}G_x\right]\asymp \gamma_2(X,d)$ for every
Gaussian process $\{G_x\}_{x\in X}$.  Talagrand's majorizing measure
theorem~\cite{Tal87} is the positive resolution of this conjecture.
By Fernique's work as described above, this amounts to the assertion
that $\E\left[\sup_{x\in X}G_x\right]\gtrsim \gamma_2(X,d)$ for
every Gaussian process $\{G_x\}_{x\in X}$. Talagrand's strategy was
to show that there is $S\subseteq X$ that embeds into an ultrametric
space with distortion $O(1)$, and $\gamma_2(S,d)\gtrsim
\gamma_2(X,d)$. It would then follow from Fernique's original proof
of the majorizing measure theorem for ultrametric spaces that
$\E\left[\sup_{x\in S}G_x\right]\gtrsim \gamma_2(S,d)\gtrsim
\gamma_2(X,d)$. Since trivially $\E\left[\sup_{x\in X}G_x\right]\ge
\E\left[\sup_{x\in S}G_x\right]$, this strategy will indeed prove
the majorizing measures theorem.

Consider the following quantity
$$
\delta_2(X,d)=\sup_{\mu\in \Prob(X)}\inf_{x\in X} \int_0^\infty \sqrt{\log\left(\frac{1}{\mu(B(x,r))}\right)}dr.
$$
For the same reason that $\gamma_2(X,d)$ is in essence a Gaussian
covering number, $\delta_2(X,d)$  should be viewed as a Gaussian
version of a packing number. A short argument (see~\cite{MN11-skel})
shows that $\delta_2(X,d)\asymp \gamma_2(X,d)$ for every finite
metric space $(X,d)$.

Take $\mu\in \Prob(X)$ at which $\delta_2(X,d)$ is attained, i.e.,
for every $x\in X$ we have $\int_0^\infty
\sqrt{\log\left(1/\mu(B(x,r))\right)}dr\ge \delta_2(X,d)$ . An
application of the ultrametric skeleton theorem to the metric
measure space $(X,d,\mu)$ with, say, $\e=3/4$, yields an ultrametric
skeleton $(S,\nu)$. Thus $S\subseteq X$ embeds into an ultrametric
space with distortion $O(1)$ and $\nu\in \Prob(S)$ satisfies
$\nu(B(x,r))\le\sqrt[4]{\mu(B(x,Cr))}$ for all $x\in X$ and $r>0$,
where $C>0$ is a universal constant. It follows that for every $x\in
S$ the integral $\int_0^\infty
\sqrt{\log\left(1/\nu(B(x,r))\right)}dr$ is at least
$\frac12\int_0^\infty \sqrt{\log\left(1/\mu(B(x,Cr))\right)}dr$,
which by a change of variable equals $\frac{1}{2C}\int_0^\infty
\sqrt{\log\left(1/\mu(B(x,r))\right)}dr$. But  $\int_0^\infty
\sqrt{\log\left(1/\nu(B(x,r))\right)}dr\gtrsim \delta_2(X,d)$ by our
choice of $\mu$. By the definition of $\delta_2(S,d)$ we have
$\delta_2(S,d)\ge \int_0^\infty
\sqrt{\log\left(1/\nu(B(x,r))\right)}dr$, so $\delta_2(S,d)\gtrsim
\delta_2(X,d)$. Since $\delta_2(\cdot)\asymp\gamma_2(\cdot)$, the
proof is complete.

\begin{remark}
{\em The use of ultrametric constructions in metric spaces in order
to prove maximal inequalities is a powerful paradigm in analysis.
The original work of Fernique and Talagrand on majorizing measures
is a prime example of the success of such an approach, and methods
related to (parts of the proof of) the ultrametric skeleton theorem
have been used in the context of certain maximal inequalities
in~\cite{MN10-max,NT10-max}. Other notable examples of related ideas
include~\cite{Chr90,Bar98,NTV03,FRT04}.}
\end{remark}

\subsection{Lipschitz maps onto cubes}\label{sec:keleti}  Keleti, M\'ath\'e and
Zindulka~\cite{KMZ12} proved the following theorem using the
nonlinear Dvoretzky theorem for Hausdorff dimension
(Theorem~\ref{thm:hausdorff}), thus answering a question of
Urba\'nski~\cite{Urb09}.
\begin{theorem}\label{thm:KMZ}
Fix $n\in \N$ and let $(\M,d_\M)$ be a compact metric space of
Hausdorff dimension bigger than $n$. Then there exists a Lipschitz
mapping from $\M$ onto the cube $[0,1]^n$.
\end{theorem}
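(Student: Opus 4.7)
The plan is to apply the nonlinear Dvoretzky theorem for Hausdorff dimension (Theorem~\ref{thm:hausdorff}) to reduce to the case of an ultrametric space, then to build a Lipschitz surjection from an ultrametric space of Hausdorff dimension exceeding $n$ onto $[0,1]^n$ by exploiting the tree structure described in Section~\ref{sec:ult strcut}, and finally to extend from a subset to all of $\M$ via a coordinate-wise McShane--Whitney extension followed by a nonexpansive retraction onto $[0,1]^n$.

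Concretely, first I would choose $\eta \in (0,1)$ with $(1-\eta)\dim_H(\M) > n$. Theorem~\ref{thm:hausdorff} then produces a closed subset $S \subseteq \M$ and an ultrametric $\rho$ on $S$ such that $d_\M|_{S \times S}$ and $\rho$ are bi-Lipschitz equivalent and $\dim_H(S,\rho) > n$. Fixing $s \in (n, \dim_H(S,\rho))$, Frostman's lemma furnishes a Borel probability measure $\mu$ on $S$ and a constant $K \in (0,\infty)$ such that $\mu(B_\rho(x,r)) \le K r^s$ for all $x \in S$ and $r > 0$.

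The key step is to construct a Lipschitz surjection $g \colon (S,\rho) \to ([0,1]^n, \|\cdot\|_\infty)$. Using the nested partitions $\P_0 \succeq \P_1 \succeq \cdots$ of $S$ into $\rho$-balls of diameter at most $2^{-k}$ described in Section~\ref{sec:ult strcut}, I would recursively assign to each $A \in \P_k$ a closed dyadic sub-cube $Q(A) \subseteq [0,1]^n$ of side $2^{-k}$, subject to two conditions: (i) if $B \in \P_{k+1}$ and $B \subseteq A$ then $Q(B) \subseteq Q(A)$, and (ii) every dyadic sub-cube of $[0,1]^n$ of side $2^{-k}$ equals $Q(A)$ for at least one $A \in \P_k$. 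The map $g$ is then defined by letting $g(x)$ be the unique point of $\bigcap_{k \ge 0} Q(A_k(x))$, where $A_k(x) \in \P_k$ is the ball containing $x$. Condition (i) immediately gives that $g$ is $1$-Lipschitz, while condition (ii) combined with the compactness of $S$ forces $g(S) = [0,1]^n$. The combinatorial room needed to enforce (ii) at every step is exactly what the Frostman inequality provides: at scale $2^{-k}$ only $2^{kn}$ dyadic sub-cubes must be hit, while $|\P_k| \ge (1/K) 2^{ks}$, leaving an exponentially growing surplus of $2^{k(s-n)}$ balls to distribute among the sub-cubes of the parent.

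To conclude, composing $g$ with the bi-Lipschitz identification $(S, d_\M) \cong (S, \rho)$ produces a Lipschitz surjection $h \colon (S, d_\M) \to [0,1]^n$. Applying the McShane--Whitney extension theorem coordinate-wise extends $h$ to a Lipschitz map $H \colon \M \to \R^n$, and post-composing with the coordinate-wise clipping $\pi(x)_i = \max(0, \min(1, x_i))$, which is $1$-Lipschitz in $\|\cdot\|_\infty$ and restricts to the identity on $[0,1]^n$, yields the desired Lipschitz map $\pi \circ H \colon \M \to [0,1]^n$; its restriction to $S$ is still $h$, so it is onto. The main obstacle I expect is the hierarchical sub-cube assignment in the key step: the tree of $\rho$-balls can be highly irregular, with some interior vertices having very few children at the next level, so the naive level-by-level pigeonhole need not succeed. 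I would handle this by grouping consecutive tree levels into well-chosen ``super-levels'' at which the $\mu$-estimate guarantees branching of at least $2^n$ from every vertex, while simultaneously choosing the side lengths of the assigned sub-cubes to match the diameters at these super-levels so that the dyadic nesting (i) is preserved.
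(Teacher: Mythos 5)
Your overall plan matches the paper's: apply Theorem~\ref{thm:hausdorff} to extract a compact subset $S$ of Hausdorff dimension $>n$ that is bi-Lipschitz equivalent to an ultrametric space, construct a Lipschitz surjection from $S$ onto $[0,1]^n$, and then extend via coordinate-wise nonlinear Hahn--Banach (your McShane--Whitney plus clipping is the same device). The reduction and the extension steps are correct.

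The gap is in the middle step, and it is real, not merely technical. You are trying to assign to each ball $A$ in the tree of $S$ a dyadic subcube $Q(A)$ of $[0,1]^n$ so that the children of $A$ receive subcubes that \emph{tile} $Q(A)$. Tiling forces the side length to halve exactly when you pass to children, so if $Q(A)$ has side $2^{-j}$ then the children's cubes have side $2^{-(j+1)}$ and there must be exactly $2^n$ of them covering $Q(A)$. At the same time, for $g$ to be $L$-Lipschitz you need the side of $Q(A)$ to be at most $L$ times the diameter of the \emph{next} super-level ball, because two points of $S$ lying in the same $A$ but in distinct children are separated in the image by up to $\mathrm{side}(Q(A))$ while their $\rho$-distance can be as small as the diameter of a child. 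These two constraints together force the super-levels $k_j$ to satisfy $k_{j+1}\le k_j+O(\log L)$, so you may only skip a \emph{bounded} number of tree levels between super-levels. But the Frostman estimate $\mu(B_\rho(x,r))\le Kr^s$ gives you a \emph{lower} bound on the number of children of $A$ only of the form $\mu(A)\cdot 2^{k_{j+1}s}/K$, and $\mu(A)$ can be arbitrarily small: nothing in the hypothesis prevents the tree from having, at every level, one child carrying almost all the mass of its parent and another child carrying a tiny sliver, with arbitrarily slowly shrinking diameters. For such an $A$ no bounded number of further subdivisions produces $2^n$ children, so either covering (hence surjectivity) or the Lipschitz bound must fail. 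Your proposed fix of adaptively grouping levels cannot resolve this, because the amount of grouping needed depends on $\min_A\mu(A)$, which is uncontrolled.

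The paper avoids this combinatorics entirely by a change of viewpoint. Instead of mapping tree cells to cubes, it uses the linear order $\prec$ on the ultrametric space satisfying $\diam(\{z: x\preceq z\preceq y\})=d_\M(x,y)$ (equation~\eqref{eq:linear order}) and sets $\f(x)=\mu(\{y: y\prec x\})$. The Frostman bound then gives $|\f(x)-\f(y)|\le K\,d_\M(x,y)^n$ directly, so $\f$ is $1/n$-H\"older into $[0,1]$; it is onto $[0,1]$ because $\mu$ is atom-free and the space is compact. Post-composing with a $1/n$-H\"older Peano curve $P:[0,1]\to[0,1]^n$ produces the required Lipschitz surjection. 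In effect, the linear order plus Peano curve \emph{automatically} assigns to each tree cell $A$ a region of $[0,1]^n$ of diameter $\asymp\mu(A)^{1/n}$, which is exactly the ``adaptive cube size'' your approach would need, but without any tiling bookkeeping. If you want to salvage your direct construction you would essentially have to rediscover this device; as written, the level-grouping argument does not go through.
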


If, in addition to the assumptions of Theorem~\ref{thm:KMZ},
$(\M,d_\M)$ is an ultrametric space, then Theorem~\ref{thm:KMZ} is
proved as follows. By Frostman's lemma there exists a Borel
probability measure $\mu$ on $\M$ and $K\in (0,\infty)$ such that
$\mu(A)\le K(\diam(A))^n$ for all Borel $A\subseteq \M$. Moreover, as
explained in Section~\ref{sec:ult strcut}, there exists a linear
order $\prec$ on $\M$ satisfying~\eqref{eq:linear order}. Define
$\f:\M\to [0,1]$ by $\f(x)=\mu(\{y\in \M:\ y\prec x\})$. Then
$|\f(x)-\f(y)|\le Kd_\M(x,y)^n$ for all $x,y\in X$. Thus $\f$ is
continuous, and since $\mu$ is atom-free and $\M$ is compact, it
follows that $\f(\M)=[0,1]$. Letting $P$ be a $1/n$-H\"older Peano
curve from $[0,1]$ onto $[0,1]^n$ (see e.g.~\cite{Sag94}), the
mapping $f=P\circ \f$ has the desired properties.

To prove Theorem~\ref{thm:KMZ}, start with a general compact metric
space $(\M,d_\M)$ with $\dim_H(\M)>n$. By
Theorem~\ref{thm:hausdorff} there exists a compact subset
$S\subseteq \M$ with $\dim_H(S)>n$ that admits a bi-Lipschitz
embedding into an ultrametric space. By the above reasoning there
exists a Lipschitz mapping $f$ from $S$ onto $[0,1]^n$. We now
conclude the proof of Theorem~\ref{thm:KMZ} by extending $f$ to a
Lipschitz mapping $F:\M\to [0,1]$ (e.g. via the nonlinear
Hahn-Banach theorem~\cite[Lem.~1.1]{BL00}).

The above reasoning exemplifies the role of ultrametric skeletons:
$S$ was used as a tool, but the conclusion makes no mention of
ultrametric spaces. Moreover, $S$ itself admits an $n$-H\"older
mapping onto $[0,1]$, something which is impossible to do for
general $\M$. Only after composition with a Peano curve do we get a
Lipschitz mapping to which the nonlinear Hahn-Banach theorem
applies, allowing us to deduce a theorem about $\M$ with no mention
of the ultrametric skeleton $S$.

\subsection{Approximate distance oracles and approximate
ranking}\label{sec:oracle} Here we explain applications of nonlinear
Dvoretzky theory to computer science. By choosing to discuss only a
couple examples we are doing an injustice to the impact that the
Ribe program has had on theoretical computer science. We refer
to~\cite{LLR95,AR98,Lin02,Mat02,Nao10,WS11} for a more thorough (but
still partial) description of the role of ideas that are motivated
by the Ribe program in approximation algorithms. Even if we only
focus attention
 on nonlinear Dvoretzky theorems, the
full picture is omitted below: Theorem~\ref{thm:finite dvo} also
yields the best known lower bound~\cite{BBM,BLMN05} on the
competitive ratio of the randomized $k$-server problem; a central
question in the field of online algorithms.

An $n$-point metric space $(X,d_X)$ is completely determined by the
numbers $\{d_X(x,y)\}_{x,y\in X}$. One can therefore store
$\binom{n}{2}$ numbers, so that when one is asked the distance
between two points $x,y\in X$ it is possible to output the number
$d_X(x,y)$ in constant time\footnote{For the sake of the discussion
in this survey one should think of ``time" as the number of
locations in the data structure that are probed plus the number of
arithmetic operations that are performed. ``Size" refers to the
number of floating point numbers that are stored. The computational
model in which we will be working is the RAM model, although weaker
computational models such as the ``Unit cost floating-point word RAM
model" will suffice. See~\cite{H-PM06,MN07} for a discussion of
these computational issues. The preprocessing algorithms below are
 randomized, in which case ``preprocessing time"  refers to
``expected preprocessing time". All other algorithms are
deterministic.}. The {\em approximate distance oracle} problem asks
for a way to store $o(n^2)$ numbers so that given (a distance query)
$x,y\in X$ one can quickly output a number that is guaranteed to be
within a prescribed factor of the true distance $d_X(x,y)$. The
following theorem was proved in~\cite{MN07} as a consequence of the
nonlinear Dvoretzky theorem~\ref{thm:finite dvo}.

\begin{theorem}\label{thm:oracle} Fix $D>1$. Every $n$-point metric space $(\{1,\ldots,n\},d)$ can be preprocessed
in time $O\left(n^2\right)$ to yield a data structure of size
$O(n^{1+O(1/D)})$ so that given $i,j\in \{1,\ldots,n\}$ one can
output in $O(1)$ time a number $E(i,j)$ that is guarantied to
satisfy \begin{equation}\label{eq:stretch D} d(i,j)\le E(i,j)\le
Dd(i,j).
\end{equation}
\end{theorem}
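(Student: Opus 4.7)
The plan is to use Theorem~\ref{thm:finite dvo} with $\varepsilon \asymp 1/D$ to reduce distance queries on $X$ to distance queries on ultrametric approximations of subsets. As a preliminary observation, every $N$-point ultrametric $(U, d_U)$ admits an $O(N)$-space data structure answering distance queries in $O(1)$ time: the lamination representation of Section~\ref{sec:ult strcut} encodes $U$ as a rooted labeled tree whose leaves are the points of $U$ and for which $d_U(x,y)$ equals the label of the least common ancestor of $x,y$, and one may combine this tree with any standard constant-time LCA structure. A single invocation of Theorem~\ref{thm:finite dvo} already yields an $n^{1-O(1/D)}$-sized subset $S_0 \subseteq X$ whose induced metric admits a distortion-$D$ ultrametric approximation supporting $O(1)$ queries; iterating the extraction on the leftover $X \setminus S_0$ partitions $X$ into $O(n^{O(1/D)})$ ultrametric pieces whose $O(1)$-query structures jointly occupy only $O(n)$ space and handle every intra-piece query.

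The main obstacle is to handle cross-piece queries within the $n^{1+O(1/D)}$ space budget. For this, the plan is to invoke the Ramsey-partition strengthening of Theorem~\ref{thm:finite dvo} that is the technical heart of~\cite{MN07}: for every scale $r > 0$ there exists a distribution over partitions $\mathcal{P}$ of $X$ into clusters of diameter at most $r$ such that for every $x \in X$,
\[
\Pr\left[B\bigl(x, r/(CD)\bigr) \subseteq \mathcal{P}(x)\right] \geq n^{-c/D},
\]
where $C, c \in (0, \infty)$ are universal constants. The proof of this padding bound reroutes the inductive cluster-extraction argument underlying Theorem~\ref{thm:finite dvo} in order to control, probabilistically, the survival of an arbitrary prescribed point $x$ under the sequence of restrictions.

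Granted the above Ramsey bound, the oracle is assembled as follows. After a standard aspect-ratio reduction that costs $O(n^2)$ preprocessing, it suffices to consider $O(\log n)$ dyadic scales. At each scale $2^k$, sample $\Theta(n^{c/D}\log n)$ independent partitions from the Ramsey distribution; a union bound ensures that with high probability every $x \in X$ has, at every scale $2^k$, at least one sampled partition with $B(x, 2^k/(CD)) \subseteq \mathcal{P}(x)$. Retain one such good partition per pair $(x, k)$ and store, for every $x$, the sequence of its cluster labels across the retained partitions as a bit-packed array. A query on $(x, y)$ is answered by locating the smallest $k$ for which $y$ shares $x$'s retained cluster and returning $2^k$ as $E(x, y)$; the Ramsey padding guarantees $d(x, y) \leq 2^k \leq D \cdot d(x, y)$. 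Each per-query lookup is implemented in $O(1)$ time via static perfect hashing, and the total storage is $O(n \cdot n^{c/D} \cdot \log n) = O(n^{1+O(1/D)})$, since the $\log n$ factor is absorbed into the exponent.

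The hardest step is the Ramsey padding bound itself: translating the existential guarantee of Theorem~\ref{thm:finite dvo} (``some subset of size $n^{1-\varepsilon}$ is almost ultrametric'') into a quantitative probabilistic statement that holds simultaneously for every prescribed point $x$ is precisely the contribution of~\cite{MN07}. All remaining steps---sampling across $O(\log n)$ scales, hashing for constant-time lookups, and the $O(n^2)$ preprocessing budget sufficient both to read the distance matrix and to run the sampling and clustering subroutines---amount to routine bookkeeping once that bound is in hand.
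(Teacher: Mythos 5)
Your proposal takes a genuinely different route from the paper, and it has a concrete gap that undermines the key claim of the theorem.

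The paper's construction applies Theorem~\ref{thm:finite dvo} iteratively, obtaining a partition $\{S_1,\ldots,S_m\}$ of $X$, but the crucial structural point is that at step $k$ the ultrametric $\rho_k$ is defined on \emph{all} of $X\setminus\bigcup_{j<k}S_j$ (not merely on $S_k$) and satisfies $d_X(x,y)\le\rho_k(x,y)\le \frac{c}{\e}d_X(x,y)$ whenever $y\in S_k$. This is exactly the content of Lemma~\ref{lem:ext ult} (or, as the paper notes, follows directly from the proof of Theorem~\ref{thm:finite dvo}). Because of this extension there are no ``cross-piece queries'' to patch up: for any $x,y$, letting $k=\min\{k(x),k(y)\}$ where $k(z)$ is the index of the piece containing $z$, both $x$ and $y$ lie in $X\setminus\bigcup_{j<k}S_j$ and one of them lies in $S_k$, so $\rho_k(x,y)$ is a valid $D$-approximation to $d(x,y)$. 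Representing each $\rho_k$ as a labeled tree equipped with a Harel--Tarjan LCA structure reduces the query to (i) computing $\min\{k(x),k(y)\}$ from two stored indices and (ii) a single LCA lookup, both $O(1)$ operations. You observed (correctly) that if one only retains the ultrametric restricted to each $S_k$ then cross-piece queries fail, but you missed that the paper resolves this by extending the ultrametric, not by introducing a new decomposition mechanism.

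The replacement mechanism you propose --- per-scale padded partitions sampled independently at $O(\log n)$ dyadic scales --- does not deliver the $O(1)$ query time that the theorem (and, as the paper emphasizes, its entire point relative to Thorup--Zwick) demands. Your query procedure is ``locate the smallest $k$ for which $y$ shares $x$'s retained cluster.'' Even with perfect hashing making each individual cluster-membership lookup $O(1)$, you still must search over the $\Theta(\log n)$ scales to find that smallest $k$; a linear scan is $O(\log n)$ and a binary search is $O(\log\log n)$, neither of which is $O(1)$. Nothing in your write-up collapses this search to a constant number of probes. There is also a bookkeeping problem in the storage scheme: you ``retain one good partition per pair $(x,k)$'' and store each $x$'s own cluster labels, but to answer a query $(x,y)$ you must know $y$'s cluster label \emph{in $x$'s retained partition} $P_{x,k}$, which is not among the labels stored for $y$ (those are relative to $y$'s retained partitions); spelling this out requires storing all labels of all points in all sampled partitions, and the $\log^2 n$ overhead then ceases to be absorbable into $n^{O(1/D)}$ once $D\gtrsim\log n/\log\log n$. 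The decisive issue, however, is the query time: the paper's extended-ultrametric plus LCA route is what makes the oracle genuinely constant-time, and your padded-decomposition route does not reproduce this.
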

Here, and in what follows, all the implied constants in the
$O(\cdot)$ notation are universal constants. The preprocessing time
of Theorem~\ref{thm:oracle} is due to Mendel and Schwob~\cite{MS09},
improving over the original preprocessing time of $O(n^{2+O(1/D)})$
that was obtained in~\cite{MN07}.

In their important paper~\cite{TZ05}, Thorup and Zwick constructed
approximate distance oracles as in Theorem~\ref{thm:oracle}, but
with query time $O(D)$. Their preprocessing time is $O(n^2)$, and
the size of their data structure is $O(Dn^{1+2(1+O(1/D))/D})$.  The
key feature of~\ref{thm:oracle} is that it yields constant query
time, i.e., a true oracle. In addition, the proof of
Theorem~\ref{thm:oracle} is via a new geometric method that we will
sketch below, based on nonlinear Dvoretzky theory.

Note that the exponent of $n$ in the size of the Thorup-Zwick oracle
is at most $1+2(1+o(1))/D$, while in Theorem~\ref{thm:oracle} it is
$1+C/D$ for some universal constant $C$ (which can be shown to be at
most $20$). This difference in constants can be important for
applications, but recently Wulff-Nilsen proved~\cite{Wul12} that one
can use the oracle of Theorem~\ref{thm:oracle} as a black box
(irrespective of the constant $C$) to construct an oracle of size
$O(n^{1+2(1+\e)/D})$ whose query time depends only on $\e$. The
significance of the constant $2$ here is that~\cite{TZ05}
establishes that it is sharp conditioned on the validity of a
positive solution to a certain well-known combinatorial open
question of Erd{\H{o}}s~\cite{Erd64}.

Sommer, Verbin and Yu~\cite{SVY09} have shown that
Theorem~\ref{thm:oracle} is sharp in the sense of the following
lower bound in the cell-probe model\footnote{See~\cite{Mil99} for
more information on the cell probe computational model. It suffice
to say here that it is a weak model, so cell probe lower bounds
should be viewed as strong impossibility results.}. Any data
structure that, given a query $i,j\in \{1,\ldots,n\}$, outputs in
time $t$ a number $E(i,j)$ satisfying~\eqref{eq:stretch D} must have
size at least $n^{1+c/(tD)}/\log n$. This lower bound works even
when the oracle's performance is measured only on metric spaces
corresponding to sparse graphs. The fact that the query time $t$ of
Theorem~\ref{thm:oracle} is a universal constant thus makes this
theorem asymptotically sharp. Nonlinear Dvoretzky theory is the only
currently known method that yields such sharp results.

It turns out that the proof of Theorem~\ref{thm:finite dvo}
in~\cite{MN07} furnishes a randomized polynomial time algorithm
that, given an $n$-point metric space $(X,d_X)$, outputs a subset
$S\subseteq X$ with $|S|\ge n^{1-\e}$ such that $(S,d_X)$ embeds
into an ultrametric space with distortion $O(1/\e)$. Moreover, we
can ensure that there exists an ultrametric $\rho$ on $X$ such that
for every $x\in X$ and $s\in S$ we have $d_X(x,s)\le \rho(x,s)\le
\frac{c}{\e}d_X(x,s)$, where $c\in (0,\infty)$ is a universal
constant. The latter statement follows from the following general
{\em ultrametric extension lemma}~\cite{MN07}, though the proof of
Theorem~\ref{thm:finite dvo} in~\cite{MN07} actually establishes
this fact directly without invoking Lemma~\ref{lem:ext ult} below
(this is important if one cares about constant factors).

\begin{lemma}[Extension lemma for approximate ultrametrics]\label{lem:ext ult}
Let $(X,d_X)$ be a finite metric space and fix $S\subseteq X$ and
$D\ge 1$. Suppose that that $\rho_0:S\times S\to [0,\infty)$ is an
ultrametric on $S$ satisfying $d_X(x,y)\le \rho_0(x,y)\le Dd_X(x,y)$
for all $x,y\in S$. Then there exists an ultrametric $\rho:X\times
X\to [0,\infty)$ such that $\rho(x,y)=\rho_0(x,y)$ if $x,y\in S$,
for every $x,y\in X$ we have $\rho(x,y)\ge d_X(x,y)/3$, and for
every $x\in X$ and $y\in S$ we have $\rho(x,y)\le 2Dd_X(x,y)$.
\end{lemma}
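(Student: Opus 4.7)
The plan is to extend $\rho_0$ using a nearest-point projection from $X$ onto $S$. For every $x \in X$ fix some $\sigma(x) \in S$ with $d_X(x,\sigma(x)) = \min_{s \in S} d_X(x,s)$, taking $\sigma(x)=x$ whenever $x \in S$, and write $r(x) = d_X(x,\sigma(x))$, so that $r(x)=0$ iff $x \in S$. I would then define, for distinct $x,y \in X$,
$$
\rho(x,y) \;\eqdef\; \max\bigl\{\rho_0(\sigma(x),\sigma(y)),\; 2r(x),\; 2r(y)\bigr\},
$$
and $\rho(x,x) = 0$. Note that $\rho\big|_{S\times S} = \rho_0$ is built in, since $r$ vanishes on $S$.

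Next I would verify that $\rho$ is an ultrametric. Symmetry is immediate. For positivity on distinct points: if both $x,y \in S$ then $\rho(x,y) = \rho_0(x,y) > 0$; otherwise $\max\{r(x),r(y)\} > 0$. For the ultrametric inequality $\rho(x,z) \le \max\{\rho(x,y),\rho(y,z)\}$, it suffices to bound each of the three terms in $\rho(x,z)$: the term $2r(x)$ is bounded by $\rho(x,y)$ directly, the term $2r(z)$ by $\rho(y,z)$ directly, and the term $\rho_0(\sigma(x),\sigma(z))$ is bounded by $\max\{\rho_0(\sigma(x),\sigma(y)),\rho_0(\sigma(y),\sigma(z))\}$ because $\rho_0$ is itself an ultrametric on $S$.

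The two quantitative bounds would then be routine case analyses. For the upper bound, fix $x \in X$ and $y \in S$, so that $r(y)=0$ and $\sigma(y)=y$. The nearest-point property gives $r(x) \le d_X(x,y)$, and the triangle inequality yields $d_X(\sigma(x),y) \le r(x) + d_X(x,y) \le 2d_X(x,y)$; invoking $\rho_0 \le D\cdot d_X$ on $S$, we get $\rho_0(\sigma(x),y) \le 2D\,d_X(x,y)$, so $\rho(x,y) = \max\{\rho_0(\sigma(x),y), 2r(x)\} \le 2D\,d_X(x,y)$. For the lower bound, fix arbitrary $x,y \in X$ and split on the size of $\max\{r(x),r(y)\}$: if $d_X(x,y) \le 3\max\{r(x),r(y)\}$ then immediately $\rho(x,y) \ge 2\max\{r(x),r(y)\} \ge \tfrac{2}{3} d_X(x,y)$; otherwise the triangle inequality gives $d_X(\sigma(x),\sigma(y)) \ge d_X(x,y) - r(x) - r(y) \ge \tfrac{1}{3} d_X(x,y)$, and $\rho_0 \ge d_X$ on $S$ yields $\rho(x,y) \ge \rho_0(\sigma(x),\sigma(y)) \ge \tfrac{1}{3} d_X(x,y)$.

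I do not anticipate a serious obstacle: the only design choice is the coefficient $2$ in front of $r(x),r(y)$, and one sees that it is forced from both sides — any coefficient $\ge 2$ would still make $\rho$ an ultrametric and give the upper bound (with a worse constant), while any coefficient $< 1$ would destroy the lower bound in the ``one near point, one far point'' regime. Taking the coefficient equal to $2$ is the cleanest choice producing the stated constants $3$ and $2D$. The extension lemma is then an immediate consequence of the four verifications above.
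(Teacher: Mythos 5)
Your proposal is correct; every step checks out. The construction $\rho(x,y)=\max\{\rho_0(\sigma(x),\sigma(y)),\,2r(x),\,2r(y)\}$ via nearest-point projection onto $S$ is exactly the right idea, and your four verifications (restriction to $S\times S$, ultrametric inequality, upper bound on $S$-to-$X$ pairs, lower bound on all pairs) are each sound. In particular, the ultrametric inequality follows cleanly from bounding the three constituent terms of $\rho(x,z)$ against $\rho(x,y)$ and $\rho(y,z)$ separately, and the lower bound's case split at the threshold $d_X(x,y)\lessgtr 3\max\{r(x),r(y)\}$ is precisely what makes the constant $3$ emerge. The one inessential omission is that when $S=\emptyset$ the map $\sigma$ is undefined; but then the lemma is vacuous (take $\rho\equiv\diam(X)$ off the diagonal), so this is not a gap worth fussing over.

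The paper itself states Lemma~\ref{lem:ext ult} without proof, referring the reader to the original source~\cite{MN07}, so there is no in-text argument against which to compare your reasoning. Your proof is a complete, self-contained justification of the statement, and the design remark at the end (that the coefficient $2$ on the $r$-terms is forced from both directions) correctly identifies the only genuine degree of freedom in this type of projection-based extension.
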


We are now in position to apply Theorem~\ref{thm:finite dvo} iteratively as follows. Set $S_0=\emptyset$ and let $S_1\subseteq X$ be the subset whose existence is stipulated in Theorem~\ref{thm:finite dvo}. Thus there exists an ultrametric $\rho_1$ on $X$ satisfying $d_X(x,y)\le \rho_1(x,y)\le \frac{c}{\e}d_X(x,y)$ for all $x\in X$ and $y\in S_1$. Apply the same procedure to $X\setminus S_1$, and continue inductively until the entire space $X$ is exhausted. We obtain a partition $\{S_1,\ldots,S_m\}$ of $X$ with the following properties holding for every $k\in \{1,\ldots,m\}$.
\begin{itemize}
\item $|S_{k}|\ge \left(n-\sum_{j=0}^{k-1}|S_j|\right)^{1-\e}$.
\item  There exists an ultrametric $\rho_k$ on $X\setminus \bigcup_{j=0}^{k-1}S_j$ satisfying $$d_X(x,y)\le \rho_k(x,t)\le \frac{c}{\e}d_X(x,y)$$ for all $x\in X\setminus \bigcup_{j=0}^{k-1}S_j$ and $y\in S_k$.
\end{itemize}

As we have seen in Section~\ref{sec:ult strcut}, for every $k\in
\{1,\ldots,m\}$ the ultrametric $\rho_k$ corresponds to a
combinatorial tree whose leaves are $X\setminus
\bigcup_{j=0}^{k-1}S_j$ and each vertex of which is labeled by a
nonegative number such that for $x,y\in X\setminus
\bigcup_{j=0}^{k-1}S_j$ the label of their least common ancestor is
exactly $\rho_k(x,y)$. A classical theorem of Harel and
Tarjan~\cite{HT84} (see also~\cite{BF-C00}) states that any
$N$-vertex tree can be preprocessed in time $O(N)$ so as to yield a
data structure of size $O(N)$ which, given two nodes as a query,
returns their least common ancestor in time $O(1)$. By applying the
Harel-Tarjan data structure to each of the trees corresponding to
$\rho_k$ we obtain an array of data structures (see
Figure~\ref{fig:oracles}) that can answer distance queries as
follows. Given distinct $x,y\in X$ let $k\in \{1,\ldots,m\}$ be the
minimal index for which $\{x,y\}\cap S_k\neq \emptyset$. Thus
$x,y\in X\setminus \bigcup_{j=0}^{k-1}S_j$, and, using the
Harel-Tarjan data structure corresponding to $\rho_k$, output in
$O(1)$ time the label of the least common ancestor of $x,y$ in the
tree corresponding to $\rho_k$. This output equals $\rho_k(x,y)$,
which, since $\{x,y\}\cap S_k\neq \emptyset$, satisfies $d_X(x,y)\le
\rho_k(x,t)\le \frac{c}{\e}d_X(x,y)$. Setting $D=c/\e$ and analyzing
the size of the data structure thus obtained (using the recursion
for the cardinality of $S_k$), yields Theorem~\ref{thm:oracle}; the
details of this computation can be found in~\cite{MN07}.

\begin{figure}[ht]
\centering \fbox{
\begin{minipage}{4.7in}
\centering \includegraphics[scale=0.7]{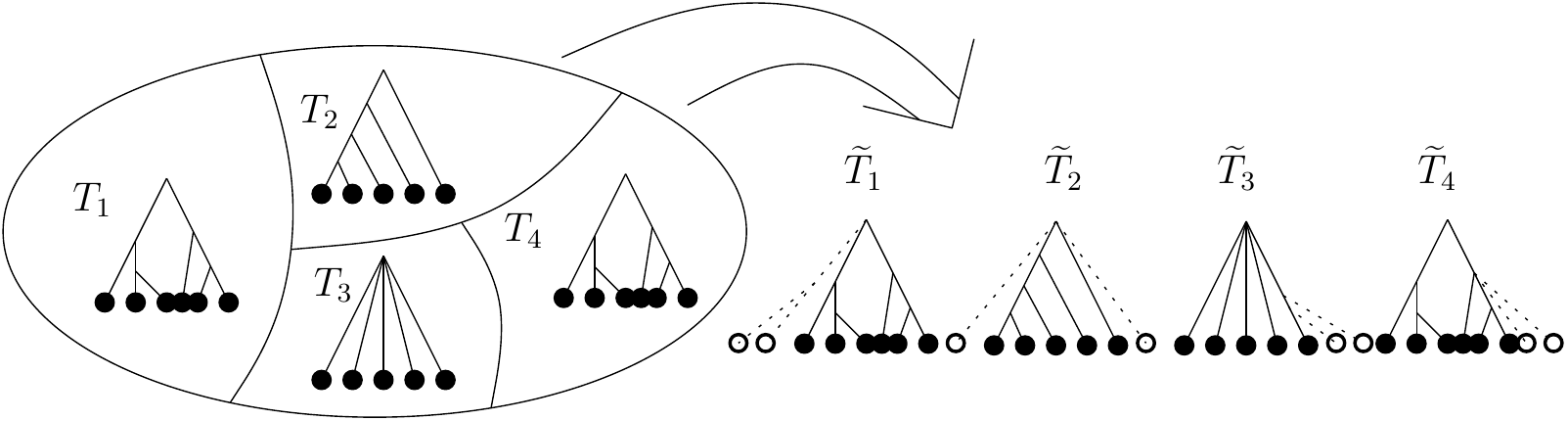}
 \caption{ In the approximate distance oracle problem an iterative application of Theorem~\ref{thm:finite dvo} yields an array of trees, which are then transformed into an array of Harel-Tarjan data structures. For the approximate ranking problem we also need to extend each tree to a tree whose leaves are the entire space $X$ using Lemma~\ref{lem:ext ult}. The nodes that were added to these trees
are illustrated by empty circles, and the dotted lines are their connections to the original tree.
 } \label{fig:oracles}
   \end{minipage}
}
\end{figure}

The ideas presented above are used in~\cite{MN07} to solve additional data structure problems. For example, we have the following theorem that addresses the {\em approximate ranking problem}, in which the goal is to compress the natural  ``$n$ proximity orders" (or ``rankings") induced on each of the points in an $n$-point metric space (i.e., each $x\in X$ orders the points of $X$ by increasing distance from itself).

\begin{theorem}\label{thm:ranking}
Fix $D>1$, $n\in \N$ and an $n$-point metric space $(X,d_X)$. Then there exists
a data structure which can be preprocessed in time $O\left(D
n^{2+O(1/D)}\log n\right)$, has size $O\left(D n^{1+O(1/D)}\right)$, and supports the following type of queries. Given
$x\in X$, have ``fast access" to a bijection $\pi^{(x)}:\{1,\ldots,n\}\to X$
satisfying
$$\forall\, 1\le i<j\le n,\quad d_X\left(x,\pi^{(x)}(i)\right)\leq  D
d_X\left(x,\pi^{(x)}(j)\right).$$ By ``fast access" to $\pi^{(x)}$ we
mean that we can do the following in $O(1)$ time:
\begin{compactenum}
\item Given $x\in X$ and $i\in \{1,\ldots,n\}$ output $\pi^{(x)}(i)$.
\item Given $x,u\in X$  output $j \in \{1,\ldots,n\}$ satisfying $\pi^{(x)}(j)=u$.
\end{compactenum}
\end{theorem}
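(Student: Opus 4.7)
The plan is to adapt the construction behind Theorem~\ref{thm:oracle} by adding per-centre bookkeeping sufficient to expose the sorted proximity orders, not merely pairwise distance estimates. I would begin by running the iterative application of Theorem~\ref{thm:finite dvo} exactly as in the proof of Theorem~\ref{thm:oracle}: with $\e=\Theta(1/D)$, obtain a partition $X=S_1\sqcup\cdots\sqcup S_m$ and, on each $D_k:=X\setminus\bigcup_{j<k}S_j$, an ultrametric $\rho_k$ satisfying $d_X(u,v)\le\rho_k(u,v)\le (c/\e)d_X(u,v)$ whenever $u,v\in D_k$ and $\{u,v\}\cap S_k\neq\emptyset$. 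Using Lemma~\ref{lem:ext ult}, I extend each $\rho_k$ to an ultrametric $\tilde\rho_k$ on all of $X$ (losing only a universal constant in distortion), and for each resulting tree $\tilde T_k$ I build the Harel--Tarjan LCA data structure so that $\tilde\rho_k(x,y)$ is retrievable in $O(1)$ time. For each $y\in X$ I record the unique index $k(y)$ with $y\in S_{k(y)}$.

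The geometric content is then a single observation: for every $x,y\in X$,
\[
E(x,y)\;:=\;\tilde\rho_{k(y)}(x,y)\;\in\;\bigl[\,d_X(x,y)/3,\; D\cdot d_X(x,y)\bigr],
\]
since $y$ always lies in the ``good set'' $S_{k(y)}$ of Lemma~\ref{lem:ext ult}, so the extended upper bound $\tilde\rho_{k(y)}(x,y)\le 2(c/\e)d_X(x,y)$ applies (calibrate $\e$ so $2c/\e\le D$), while the pointwise inequality $\tilde\rho_{k(y)}\ge d_X/3$ furnishes the lower bound. Consequently any permutation $\pi^{(x)}$ that sorts $X$ by the value of $E(x,\cdot)$ meets the $D$-approximate-ranking requirement after a harmless constant adjustment. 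What remains is purely an algorithmic task: to store the $n$ permutations compactly and retrieve them in constant time.

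To realise $\pi^{(x)}$ compactly, I exploit the key structural fact that, for a fixed $x$ and a fixed tree $\tilde T_k$, the function $y\mapsto\tilde\rho_k(x,y)$ takes only $O(\mathrm{depth}(\tilde T_k))$ distinct values on the leaves of $\tilde T_k$: its level sets are the nested ``annuli'' swept out by walking up the ancestor path of $x$ in $\tilde T_k$, and within each annulus the members can be enumerated in a single DFS order stored once globally. Thus for each tree $\tilde T_k$ the sorted list $\{y\in S_k\}$ by $E(x,\cdot)$ is encoded by the short sequence of ancestor labels of $x$ in $\tilde T_k$ together with pointers into the global DFS listing of the $S_k$-leaves of each of those ancestor subtrees. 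The full permutation $\pi^{(x)}$ is the label-wise merge of these per-tree lists across $k=1,\dots,m$; once the merge is computed, a two-level indexing table supports $O(1)$ evaluation of both $\pi^{(x)}(i)$ and its inverse.

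The main obstacle I foresee is the resource accounting, not the geometry. The global trees cost $\sum_k|D_k|=\sum_y k(y)$ in storage, and the per-$x$ ancestor-path tables cost $\sum_x\sum_{k\le k(x)}O(\mathrm{depth}(\tilde T_k))$; a careful double-counting along the recursion $|S_k|\gtrsim|D_k|^{1-1/D}$ with $|D_{k+1}|=|D_k|-|S_k|$ is required to show that these aggregate to $O(Dn^{1+O(1/D)})$ rather than $O(n^2)$, and that the per-$x$ merge indices fit within the same budget. Here one must exploit that $k(y)$ is essentially bounded by the number $m$ of layers produced by the iterative Dvoretzky partition, and that the trees $\tilde T_k$ may be assumed to have controlled depth (replacing each tree by its binary contraction if needed). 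The preprocessing time is the $O(n^{2+O(1/D)})$ cost of producing the partition and trees via the algorithm of~\cite{MN07,MS09}, plus an $O(n\log n)$ sort per centre $x$ using the oracle $E(x,\cdot)$ to build the merged order, for a total of $O(Dn^{2+O(1/D)}\log n)$, matching the claim.
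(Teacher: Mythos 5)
Your proposal follows the paper's approach exactly: iterate the nonlinear Dvoretzky theorem to build the layered partition $S_1,\ldots,S_m$ as in the proof of Theorem~\ref{thm:oracle}, and then use Lemma~\ref{lem:ext ult} to extend each layer's ultrametric from $X\setminus\bigcup_{j<k}S_j$ to all of $X$ so that the per-layer tree is defined on the whole ground set — which is precisely what the paper prescribes. Your geometric observation that $E(x,y):=\tilde\rho_{k(y)}(x,y)$ gives a $D$-approximate distance because $y$ always lies in the certified set $S_{k(y)}$ is correct, and this is what makes the per-vertex ranking extractable from the trees.

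One caution: the paper itself defers the entire data-structure component to~\cite{MN07} (``replace the Harel--Tarjan data structure by a new data structure that is custom-made for the approximate ranking problem''), so your sketch of the storage scheme — ancestor-path tables, DFS range pointers, label-wise merge — is filling in a part the survey deliberately black-boxes. The sketch has the right ingredients, but the sticking point you leave implicit is the $O(1)$ query time: after merging $O(m\cdot\mathrm{depth})$ per-layer annulus lists for a fixed $x$, the merge boundaries must be indexed so that ``find the segment containing rank $i$'' and its inverse both resolve in constant time rather than $O(\log)$ time, and this must be reconciled with the $O(Dn^{1+O(1/D)})$ space budget. The resource accounting via the recursion $|S_k|\gtrsim|D_k|^{1-\e}$ giving $m\lesssim n^{O(1/D)}$ is the right idea for the aggregate sizes, but it would need to be carried out carefully together with a concrete indexing scheme (this is the ``custom-made data structure'' of~\cite{MN07}) before the constant-time claim is actually established.
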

The proof of Theorem~\ref{thm:ranking} follows the same procedure as above, with the following differences: at each stage we extend the ultrametric $\rho_k$ from $X\setminus \bigcup_{j=0}^{k-1}S_j$ to $X$ using Lemma~\ref{lem:ext ult}, and we replace the Harel-Tarjan data structure by a new data structure that is custom-made for the approximate ranking problem. The details are contained in~\cite{MN07}.

\subsection{Random walks and quantitative
nonembeddability}\label{sec:random walks}

While Ball introduced the notion of Markov type in order to
investigate the Lipschitz extension problem, this notion has proved
to be a versatile tool for the purpose of proving nonembeddability
results. The use of Markov type in the context of embedding problems
was introduced in~\cite{LMN02}, and this method has been
subsequently developed in~\cite{BLMN05,NPSS06,ANP09,NP08,NP11}.
Somewhat curiously, Markov type can also be used as a tool to prove
Lipschitz non-extendability results; see~\cite{NP11}. Markov type is
therefore a good example of the impact of ideas originating in the
Ribe program on metric geometry.

In this section we illustrate how one can use the notion of Markov
type to reason that certain metric spaces must be significantly
distorted in any embedding into certain Banach spaces. Since our
goal here is to explain in the simplest possible terms this way of
thinking about nonembeddability, we will mostly deal with model
problems, which might not necessarily be the most
general/difficult/important problems of this type. For example, we
will almost always state our results for embeddings into Hilbert
space, though it will be obvious how to extend our statements to
general target spaces with Markov type $p\in (1,\infty)$. Also, we
will present proofs in the case of finite graphs with large girth.
While these geometric objects are somewhat exotic, they serve as a
suitable model case for other spaces of interest, to which Markov
type techniques also apply (e.g., certain Cayley graphs, including
the discrete hypercube), since the large girth assumption simplifies
the arguments, while preserving the essential ideas. We stress,
however, that finite graphs with large girth are interesting
geometric objects in their own right. Their existence is established
with essentially complete freedom in the choice of certain governing
parameters (such as the girth and degree; see~\cite{Sac63}), yet
understanding their geometry is difficult: this is
illustrated by the fact that several basic problems on the
embeddability properties of such graphs remain open. We will present
some of these open problems later.

Fix an integer $k\ge 3$. Let $G=(V,E)$ be an $n$-vertex
$k$-regular connected graph, equipped with its associated
 shortest path metric $d_G$. Let $g$ be the girth of $G$, i.e., the length of the shortest closed cycle in $G$. Fix an integer
$r<\frac{g}{4}$.
 For any ball $B$ of radius $r$ in $G$, the metric space $(B,d_G)$ is isometric to
 $\left(T_r^k,d_{T_r^k}\right)$ (the tree $T_r^k$ is defined in Section~\ref{sec:Mconvexity}); see Figure~\ref{fig:girth}. Thus Bourgain's lower bound~\eqref{eq:bourgain tree} implies that
\begin{equation}\label{eq:girth bourgain}
c_{\ell_2}(G)\gtrsim \sqrt{\log g}.
\end{equation}

Can we do better than~\eqref{eq:girth bourgain}? It seems reasonable
to expect that we should be able to say more about the geometry of
$G$ than that it contains a large tree. When one tries to imagine
what does a finite graph with large girth look like, one quickly
realizes that it must be a complicated object: while it is true that
small enough balls in such a graph are trees, these local trees must
somehow be glued together to create a finite $k$-regular graph. It
seems natural to expect that the interaction between these local
trees induces a geometry which is far more complicated than what is
suggested by the lower bound~\eqref{eq:girth bourgain}. This
question was raised in 1995 by Linial, London and
Rabinovich~\cite{LLR95}. Our ultimate goal is to argue that {\em
all} large enough subsets of $G$ must be significantly distorted
when embedded into Hilbert space, but as a warmup we will start with
an argument of~\cite{LMN02} which shows how the fact that Hilbert
space has Markov type $2$ easily implies the following exponential
improvement to~\eqref{eq:girth bourgain}:
\begin{equation}\label{eq:LMN}
c_{\ell_2}(G)\gtrsim \sqrt{g}.
\end{equation}

To prove~\eqref{eq:LMN} we shall use the fact that $G$ has large
girth as follows: it isn't only the case that $G$ contains large
trees, in fact {\em every} small enough ball in $G$  is isometric to
a tree. This information can be harnessed to our advantage as
follows. Let $\{Z_t\}_{t=0}^\infty$ be the standard random walk on
$G$, i.e., $Z_0$ is uniformly distributed on $V$ and $Z_{t+1}$
conditioned on $Z_t$ is uniformly distributed on the $k$-neighbors
of $Z_t$. Then $\{Z_t\}_{t=0}^\infty$ is a stationary reversible Markov chain on
$V$. We claim that for every $t<\frac{g}{2}-1$ we have
\begin{equation}\label{eq:drift}
\E\left[d_G(Z_{t},Z_0)\right]\gtrsim t. \end{equation}

\begin{figure}[ht]
\centering \fbox{
\begin{minipage}{4.7in}
\centering
\includegraphics[height=80mm]{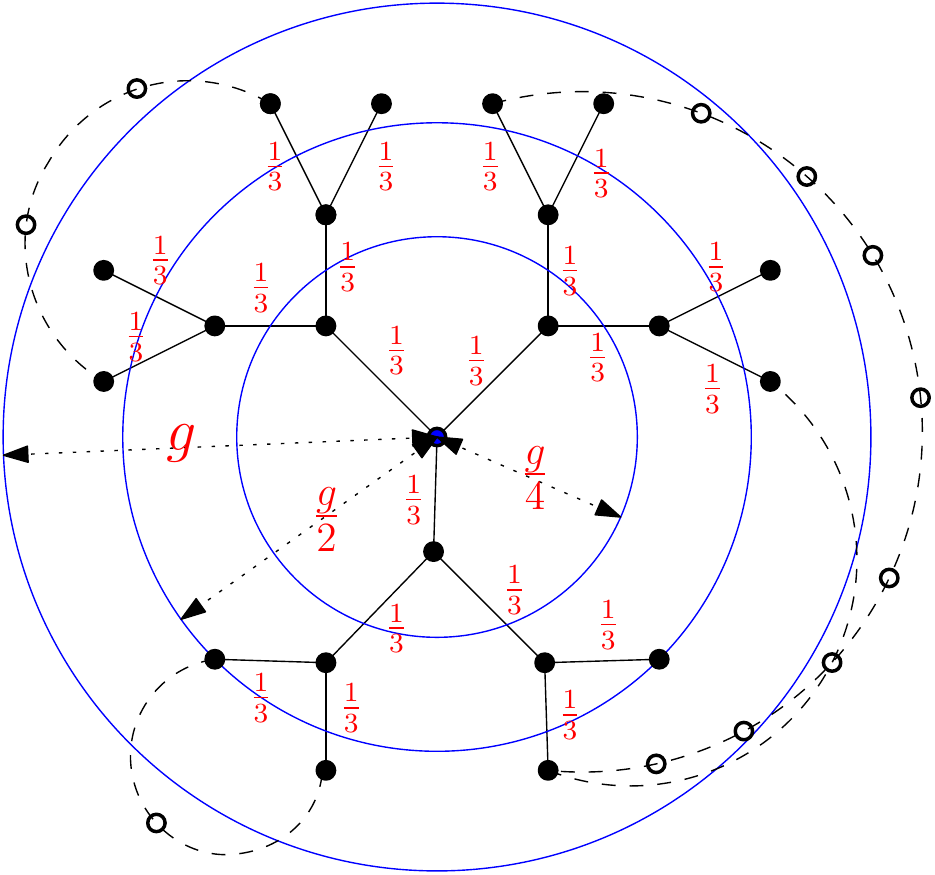}
 \caption{A $3$-regular graph with girth $g$. Balls of radius $\frac{g}{2}$ look like trees in the sense
 that the distance of any vertex to the center of the ball is the same as the
 corresponding distance in a $3$-regular tree rooted at the center
 of the ball. Any ball of radius $\frac{g}{4}$ is isometric to a $3$-regular tree. If we pick the center of the ball uniformly at random, and then perform a standard random walk,
 then up to time $<\frac{g}{2}$, at each step there is probability $\frac23$ to step further away from the center in
 the next step.} \label{fig:girth}
 \end{minipage}
}
\end{figure}

The proof of~\eqref{eq:drift} is simple. $Z_0$ is chosen uniformly
among the vertices of $G$. But, once $Z_0$ has been chosen, the walk
$\left\{Z_s:\ s<\frac{g}{2}-1\right\}$ is simply the standard walk
on a $k$-regular tree starting from its root. At each step of this
walk, if $Z_s\neq Z_0$ then with probability $1-\frac{1}{k}$ the
vertex $Z_{s+1}$ is one of the $k-1$ neighbors of $Z_s$ which are
further away from $Z_{0}$ than $Z_s$, and with probability
$\frac{1}{k}$ the vertex $Z_{s+1}$ is the unique neighbor of $Z_s$
that lies on the (unique) path joining $Z_s$ and $Z_0$. If it
happens to be the case that $Z_s=Z_0$, then $Z_{s+1}$ is further
away from $Z_0$ than $Z_s$ with probability $1$. Since
$1-\frac{1}{k}>\frac{1}{k}$, we see that even though $\left\{Z_s:\
s<\frac{g}{2}-1\right\}$ is a stationary reversible Markov chain, in
terms of the distance from $Z_0$ it is effectively a one dimensional
random walk with positive drift, implying the required lower
bound~\eqref{eq:drift}.

Suppose that $f:V\to L_2$ satisfies
\begin{equation}\label{eq:dist assumption}
\forall\, x,y\in V,\quad d_G(x,y)\le \|f(x)-f(y)\|_2\le D d_G(x,y).
\end{equation}
Our goal is to bound $D$ from below. The fact that Hilbert space has
Markov type $2$  implies that for all times $t<\frac{g}{2}-1$ we
have
\begin{multline}\label{eq;for contradiction}
t^2\stackrel{\eqref{eq:drift}}{\lesssim}
\left(\E\left[d_G(Z_{t},Z_0)\right]\right)^2\le\E\left[d_G(Z_t,Z_0)^2\right]\stackrel{\eqref{eq:dist
assumption}}{\le}
\E\left[\|f(Z_t)-f(Z_0)\|_2^2\right]\\\stackrel{\eqref{eq:def
Mtype}}{\le}
t\E\left[\|f(Z_1)-f(Z_0)\|_2^2\right]\stackrel{\eqref{eq:dist
assumption}}{\le} tD^2\E\left[d_G(Z_1,Z_0)^2\right]=tD^2.
\end{multline}
 Taking $t\asymp g$ in~\eqref{eq;for contradiction}
yields~\eqref{eq:LMN}.

The above argument can be extended to the case when $G$ is not
necessarily a regular graph. All we need is that the {\em average
degree} of $G$ is greater than $2$. Recall that the average degree
of $G$ is $$\frac{1}{|V|}\sum_{x\in V} \deg_G(x)=\frac{2|E|}{|V|},$$
where $\deg_G(x)$ denotes the number of edges in $E$ emanating from
$x$. Since we will soon be forced to deal with graphs of large girth
which are not necessarily regular, we record here the following
lemma from~\cite{BLMN05}:
\begin{lemma}\label{lem:not regular}
Let $G=(V,E)$ be a connected graph with girth $g$ and average degree
$k$. Then
\begin{equation}\label{eq:non regular}
c_{\ell_2}(G)\gtrsim \left(1-\frac{2}{k}\right)\sqrt{g}.
\end{equation}
\end{lemma}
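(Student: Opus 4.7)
The plan is to rerun verbatim the Markov-type-$2$ argument that produced~\eqref{eq:LMN}, but with the simple random walk on $G$ started from its stationary distribution $\pi(v)=\deg_G(v)/(2|E|)$, rather than from the uniform distribution (which is no longer stationary when $G$ is not regular). This walk is reversible and stationary, so Markov type~$2$ for $L_2$ applies without change; moreover $d_G(Z_1,Z_0)=1$ almost surely. Consequently the only ingredient that needs to be revisited is the drift estimate $\E[d_G(Z_t,Z_0)]\gtrsim t$.

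I expect the drift estimate to be the only real obstacle. In the regular case one uses that, inside the ball of radius $<g/2$ around $Z_0$ (which is isometric to a tree), the walk is biased outward by $1-2/k$. For $G$ merely having average degree $k$, the outward bias at step $s$ is $1-2/\deg_G(Z_s)$, which fluctuates with $s$: given $Z_s\ne Z_0$, exactly one of the $\deg_G(Z_s)$ neighbors of $Z_s$ lies on the unique shortest path back to $Z_0$ (since the ball is a tree), while if $Z_s=Z_0$ the increment $d_G(Z_{s+1},Z_0)-d_G(Z_s,Z_0)$ equals $+1$ deterministically, which dominates $1-2/\deg_G(Z_s)$. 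Summing the conditional increments for $t<g/2$ therefore yields
\[
\E[d_G(Z_t,Z_0)]\ge t-2\sum_{s=0}^{t-1}\E\!\left[\frac{1}{\deg_G(Z_s)}\right].
\]
Stationarity is now the key: because each $Z_s$ is marginally distributed as $\pi$,
\[
\E\!\left[\frac{1}{\deg_G(Z_s)}\right]=\sum_{v\in V}\frac{\pi(v)}{\deg_G(v)}=\frac{|V|}{2|E|}=\frac{1}{k},
\]
and this is precisely where the average-degree hypothesis enters. Hence $\E[d_G(Z_t,Z_0)]\ge (1-2/k)\,t$ for every $t<g/2$.

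With this drift bound, the chain of inequalities in~\eqref{eq;for contradiction} goes through verbatim: for any $f:V\to L_2$ satisfying $d_G(x,y)\le \|f(x)-f(y)\|_2\le D\,d_G(x,y)$, Jensen's inequality, the lower bi-Lipschitz bound, Markov type~$2$ of $L_2$, the upper bi-Lipschitz bound, and $d_G(Z_1,Z_0)\equiv 1$ combine to give
\[
\Bigl(\bigl(1-\tfrac{2}{k}\bigr)t\Bigr)^{2}\le\E[d_G(Z_t,Z_0)^2]\le\E[\|f(Z_t)-f(Z_0)\|_2^2]\le tD^2.
\]
Choosing $t$ to be the largest integer less than $g/2$ (so $t\asymp g$) gives $D\gtrsim (1-2/k)\sqrt{g}$, which is~\eqref{eq:non regular}.
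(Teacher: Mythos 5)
Your proof is correct and takes essentially the same route as the paper: the paper likewise starts the simple random walk from the stationary distribution $\pi(v)=\deg_G(v)/(2|E|)$, invokes Markov type~$2$ of $\ell_2$, and defers the remaining ``short computation'' to~\cite{BLMN05} (Theorem~6.1); you have simply supplied that computation, with stationarity giving $\E[1/\deg_G(Z_s)]=|V|/(2|E|)=1/k$ and hence the drift bound $\E[d_G(Z_t,Z_0)]\ge(1-2/k)t$ in the tree range $t\lesssim g$.
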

The proof of Lemma~\ref{lem:not regular} follows the lines of the
above proof of~\eqref{eq:LMN}, with the following changes. For $x\in
V$ define
\begin{equation}\label{eq:def pi}
\pi(x)=\frac{\deg_G(x)}{\sum_{y\in V}
\deg_G(y)}=\frac{\deg_G(x)}{2|E|}.
\end{equation}
Now, let $\{Z_t\}_{t=0}^\infty$ be the standard random walk on $G$,
where $Z_0$ is distributed on $V$ according to the probability
distribution $\pi$. Then $\{Z_t\}_{t=0}^\infty$ is a stationary
reversible Markov chain on $V$, so that the Markov type $2$
inequality still applies to it. A short computation now
yields~\eqref{eq:non regular}; the details are contained in Theorem
6.1 of~\cite{BLMN05}.


This type of use of random walks is quite flexible. For example,
consider the case of the Hamming cube $(\{-1,1\}^n,\|\cdot\|_1)$.
Let $\{Z_t\}_{t=0}^\infty$ be the standard random walk on
$\{-1,1\}^n$, where $Z_0$ is distributed uniformly on $\{-1,1\}^n$.
At each step, one of the $n$ coordinates of $Z_t$ is chosen
uniformly at random, and its sign is flipped. For $t<\frac{n}{2}$ we
have $\E\left[\|Z_t-Z_0\|_1\right]\gtrsim t$, since at each step
with probability at least $\frac12$ the coordinate being flipped has
not been flipped in any previous step of the walk. As we have argued
above, this implies that $c_{\ell_2}\left(\{-1,1\}^n\right)\gtrsim
\sqrt{n}$. This lower bound is sharp up to the implied
multiplicative constant; in fact, a classical result of
Enflo~\cite{Enf69} states that
$c_{\ell_2}\left(\{-1,1\}^n\right)=\sqrt{n}$. Enflo's proof of this
fact uses a tensorization argument (i.e., induction on dimension
while relying on the product structure of the Hamming cube). Another
proof~\cite{KN06} of Enflo's theorem can be deduced from a Fourier
analytic argument (both known proofs of the equality
$c_{\ell_2}\left(\{-1,1\}^n\right)=\sqrt{n}$ are nicely explained in
the book~\cite{Mat02}). These proofs rely heavily on the structure
of the Hamming cube, while, as we shall see below, the random walk
proof that we presented here is more robust: e.g. it applies to
negligibly small subsets of the Hamming cube which may be highly
unstructured.

Before passing to a more sophisticated application of Markov type,
we recall the following interesting open question~\cite{LMN02}.

\begin{question}\label{Q:euclidean girth}
Let $c_2(g)$ be the infimum of $c_{\ell_2}(G)$ over all finite
$3$-regular connected graphs $G$ with girth $g$. What is the growth
rate of $c_2(g)$ as $g\to \infty$? In particular, does $c_2(g)$ grow
asymptotically faster than $\sqrt{g}$?
\end{question}
In order to prove that $\lim_{g\to \infty}c_2(g)/\sqrt{g}=\infty$
(if true), we would need to use more about the structure of $G$ than
the fact that a ball of radius $\asymp g$ around each vertex is
isometric to  a $3$-regular tree. One would need to understand the
complicated regime in which these local trees interact. Our
understanding of the geometry of these interactions is currently
quite poor, which is why Question~\ref{Q:euclidean girth} is
meaningful. On the other hand, if for arbitrarily large $g\in \N$
there were $3$-regular graphs $G$ of girth $g$ with
$c_{\ell_2}(G)\lesssim \sqrt{g}$, this would also have interesting
consequences, as explained in~\cite{LMN02}. Note that one could also
ask a variant of Question~\ref{Q:euclidean girth}, when $g$ depends
on the cardinality of $V$. The case $g\asymp \log |V|$ is of
particular importance (see~\cite{LMN02}).

Letting $c_1(g)$ denote the infimum of $c_{\ell_1}(G)$ over all
finite $3$-regular connected graphs $G$ with girth $g$, it was also
asked in~\cite{LMN02} whether or not $c_1(g)$ tends to $\infty$ with
$g$. This question was recently solved by Ostrovskii~\cite{Ost12},
who showed that for arbitrarily large $n\in \N$ there exists a
$3$-regular graph $G_n$ of girth at least a constant multiple of
$\log\log n$ yet $c_{\ell_1}(G_n)=O(1)$. Since trees admit an
isometric embedding into $\ell_1$, such questions address the issue
of how the local geometry of a metric space affects its global
geometry (see~\cite{ALNRRV06,CMM07,KS09,RS09} for related
investigations along these lines). It remains an interesting open
question whether there exist arbitrarily large graphs of logarithmic
girth that admit a bi-Lipschitz embedding into $\ell_1$;
see~\cite{LMN02} for ramifications of this question.

\subsubsection{Impossibility results for nonlinear Dvoretzky problems}\label{sec:ramsey}

Our goal here is to explain the relevance of Markov type techniques
to proving impossibility results for nonlinear Dvoretzky problems,
i.e., to show that certain metric spaces cannot have large subsets
that well-embed into Hilbert space (or a metric space with
nontrivial Markov type). Everything presented here is part of the
investigation in~\cite{BLMN05} of the nonlinear Dvoretzky problem in
concrete examples. Additional results of this type are contained
in~\cite{BLMN05}.

We have already seen that $c_{\ell_2}\left(\{-1,1\}^n\right)\gtrsim
\sqrt{n}$. Assume now that we are given a subset $S\subseteq
\{-1,1\}^n$.  If we only knew that the cardinality of $S$ is large,
would it then be possible to show that $c_{\ell_2}(S)$ is also
large? It is not clear how to proceed if $|S|=o(2^n)$ (this isn't
clear even when $|S|$ is, say, one tenth of the cube). The random
walk technique turns out to be robust enough to yield almost sharp
bounds on the Euclidean distortion of a large subset of the Hamming
cube, without any a priori assumption on the structure of the
subset. Namely, it was proved in~\cite{BLMN05} that for every
$S\subseteq \{-1,1\}^n$ we have
\begin{equation}\label{eq:ramsey cube}
c_{\ell_2}(S)\gtrsim \sqrt{\frac{n}{1+\log\left(\frac{2^n}{|S|}\right)}}.
\end{equation}
Thus, in particular, if
$|S|=2^{n(1-\e)}=\left|\{-1,1\}^n\right|^{1-\e}$,
then~\eqref{eq:ramsey cube} becomes
\begin{equation*}\label{eq:ramsey cube epsilon}
c_{\ell_2}(S)\gtrsim \min\left\{\frac{1}{\sqrt{\e}},\sqrt n\right\}.
\end{equation*}
This bound is tight up to logarithmic factors: it was shown
in~\cite{BLMN05} that for every $\e\in (0,1)$ there exists $S\subset
\{-1,1\}^n$ with $|S|\ge 2^{n(1-\e)}$ and
$$
c_{\ell_2}(S)\lesssim \sqrt{\frac{1+\log(1/\e)}{\e}}.
$$

The proof of~\eqref{eq:ramsey cube} uses Markov type in a crucial
way. Here, in order to illustrate the main ideas, we will deal with
the analogous problem for subsets of graphs with large girth.
Namely, let $G=(V,E)$ be a finite $k$-regular ($k\ge 3$) connected
graph with girth $g$. Assume that $S\subseteq V$ is equipped with
the metric $d_G$ inherited from $G$. We will prove the following
lower bound on $c_{\ell_2}(S)$, which is also due to~\cite{BLMN05}:
\begin{equation}\label{eq:ramsey girth}
c_{\ell_2}(S)\gtrsim \sqrt{\frac{g}{1+\log_k\left(\frac{|V|}{|S|}\right)}}.
\end{equation}

Note that when $S=V$ we return to~\eqref{eq:LMN}, but the proof
of~\eqref{eq:ramsey girth} is more subtle than the proof
of~\eqref{eq:LMN}. This proof uses more heavily the fact that
in~\eqref{eq:def Mtype} we are free to choose the stationary
reversible Markov chain as we wish. Our plan is to construct a
special stationary reversible Markov chain on $S$, which in
conjunction with the Markov type $2$ property of Hilbert space, will
establish~\eqref{eq:ramsey girth}.

Ideally, we would like our Markov chain to be something like the
standard random walk on $G$, restricted to $S$. Lemma~\ref{lem:not
regular} indicates that for this approach to work we need $S$ to
have large average degree, or equivalently to contain many edges of
$G$. But, $S$ might be very small, and need not contain any edge of
$G$. We will overcome this problem by considering a different set of
edges $E'$ on $V$, which is nevertheless closely related
to the geometry of $G$, such that $S$ contains sufficiently many
edges from $E'$. Before proceeding to carry out this plan,
we therefore need to make a small digression which explains a
spectral method for showing that a subset of a graph contains many
edges.

\subsubsection{$\lambda_n$ and self mixing}
Let $H=(\{1,\ldots,n\},E_H)$ be a $d$-regular loop-free graph on
$\{1,\ldots,n\}$. We denote by $A_H=(a_{ij})$ its adjacency matrix,
i.e., the $n\times n$ matrix whose entries are in $\{0,1\}$, and
$a_{ij}=1$ if and only if $ij\in E_H$. Let $\lambda_1(H)\ge
\lambda_2(H)\ge \cdots\ge \lambda_n(H)$ be the eigenvalues of $A_H$.
Thus $\lambda_1(H)=d$, and since the diagonal entries of $H$ vanish,
$\mathrm{trace}(A_H)=\sum_{i=1}^n \lambda_i(H)=0$. In particular we
are ensured that $\lambda_n(H)$ is negative.

Let $\{v_1,\ldots,v_n\}$ be an eigenbasis of $A_H$, which is
orthonormal with respect to the standard scalar product
$\langle\cdot,\cdot\rangle$ on $\R^n$. We can choose the labeling so
that $v_1=\frac{1}{\sqrt{n}}\1_{\{1,\ldots,n\}}$, and the eigenvalue
corresponding to $v_i$ is $\lambda_i(H)$. For every $S\subseteq
\{1,\ldots,n\}$ let $E_H(S)$ denote the number of edges in $E_H$
that are incident to two vertices in $S$. Observe that
\begin{multline*}
\langle A_H\1_S,\1_S\rangle=\sum_{i=1}^n\lambda_i(H)\langle
v_i,\1_S\rangle^2=\frac{d|S|^2}{n}+\sum_{i=2}^n \lambda_i(H)\langle
v_i,\1_S\rangle^2\\\ge \frac{d|S|^2}{n}+\lambda_n(H)\sum_{i=2}^n
\langle
v_i,\1_S\rangle^2=\frac{d|S|^2}{n}+\lambda_n(H)\left(|S|-\frac{|S|^2}{n}\right).
\end{multline*}
Thus, since $\lambda_n(H)<0$,
\begin{equation}\label{eq:self mixing}
2E_H(S)=\langle A_H\1_S,\1_S\rangle\ge \frac{d|S|^2}{n}+\lambda_n(H)|S|.
\end{equation}

We can use~\eqref{eq:self mixing} to deduce that $E_H(S)$ is large
provided that $\lambda_n(H)$ is not too negative (in~\cite{BLMN05}
such a bound is called a {\em self mixing inequality}). The bound
in~\eqref{eq:self mixing} is perhaps less familiar than Cheeger's
inequality~\cite{Che70,AM85}, which relates the number of edges
joining $S$ and its complement to $\lambda_2(H)$, but these two
inequalities are the same in spirit. We refer to the
survey~\cite{HLW06} for more information on the connection between
the second largest eigenvalue and graph expansion. While bounds on
$\lambda_2(H)$ would have been very useful for us to have in the
ensuing argument to prove~\eqref{eq:ramsey girth} (and the
corresponding proof of~\eqref{eq:ramsey cube} in~\cite{BLMN05}), we
will only obtain bounds on $|\lambda_n(H)|$ (for an appropriately
chosen graph $H$), which will nevertheless suffice for our purposes.

\subsubsection{The spectral argument in the case of large girth}

Returning to the proof of~\eqref{eq:self mixing}, let $G=(V,E)$ be
an $n$-vertex $k$-regular connected graph ($k\ge 3$) with girth $g$.
We assume throughout that $G$ is loop-free and contains no multiple
edges. As before, the shortest path metric on $G$ is denoted by
$d_G$. Fix $m\in \N$ and let $G^{(m)}=(V,E_{G^{(m)}})$ denote the
distance $m$ graph of $G$, i.e., the graph on $V$ in which two
vertices $u,v\in V$ are joined by an edge if and only if
$d_G(u,v)=m$.

Recall that $A_{G^{(m)}}$ denotes the adjacency matrix of $G^{(m)}$.
Thus we have $A_{G^{(0)}}=I_V$ (the identity matrix on $V$) and
$A_{G^{(1)}}=A_G$. Moreover, $A_G^2=kI_V+A_{G^{(2)}}$, and
$A_GA_{G^{(m-1)}}=(k-1)A_{G^{(m-2)}}+A_{G^{(m)}}$ for all
$2<m<\frac{g}{2}$. Indeed, write
$\left(A_GA_{G^{(m-1)}}\right)_{uv}=\sum_{w\in V}
\left(A_G\right)_{uw}\left(A_{G^{(m-1)}}\right)_{wv}$ for all
$u,v\in V$. There are only two types of possible contributions to
this sum: either $d_{G}(u,v)=m$ and $w$ is on the unique path
joining $u$ and $v$ such that $uw\in E$, or $d_{G}(u,v)=m-2$ and $w$
is one of the neighbors of $v$ which is not on the path joining $u$
and $v$ (the number of such $w$ equals $k$ if $m=2$, and equals
$k-1$ if $m>2$).

The above discussion shows that if we define a sequence of
polynomials $\{P_m^k(x)\}_{m=0}^\infty$ by
\begin{equation}\label{eq:start recursion} P_0^k(x)=1,\quad
P_1^k(x)=x,\quad P_2^k(x)=x^2-k,
\end{equation}
and recursively,
\begin{equation}\label{eq:geronimus}
P_m^k(x)=xP_{m-1}^k(x)-(k-1)P_{m-2}^k(x),
\end{equation}
then for all integers $0\le m<\frac{g}{2}$,
\begin{equation}\label{eq:geon G}
A_{G^{(m)}}=P_m^k\left(A_G\right).
\end{equation}

The polynomials $\{P_m^k(x)\}_{m=0}^\infty$ are known as the
Geronimus polynomials (see~\cite{Sol92} and the references therein).
By~\eqref{eq:geon G}, when $m<\frac{g}{2}$ the eigenvalues of
$A_{G^{(m)}}$ are
$\left\{P_m^k\left(\lambda_i(A_G)\right)\right\}_{i=1}^n$. For the
purpose of bounding the negative number
$\lambda_n\left(A_{G^{(m)}}\right)$ from below, it therefore
suffices to use the bound
\begin{equation}\label{eq:negative part}
\lambda_n\left(A_{G^{(m)}}\right)\ge \min_{x\in \R  }P_m^k(x).
\end{equation}

A simple induction shows that $P_m^k(x)$ is a polynomial of  degree
$m$ with leading coefficient $1$, and it is an even function for
even $m$, and an odd function for odd $m$. Moreover, we have the
following trigonometric identity (see~\cite{Sol92}):
\begin{multline}\label{eq:trigonometry}
P_m^k\left(2\sqrt{k-1}\cos\theta\right)\\=
(k-1)^{\frac{m}{2}-1}\cdot\frac{(k-1)\sin((m+1)\theta)-\sin((m-1)\theta)}{\sin
\theta}.
\end{multline}
The proof of~\eqref{eq:trigonometry} is a straightforward induction:
check the validity of~\eqref{eq:trigonometry} for $m=1,2$
using~\eqref{eq:start recursion}, and verify by induction
that~\eqref{eq:trigonometry} holds using the
recursion~\eqref{eq:geronimus}.

Define $\theta_q=\frac{\frac{\pi}{2}+q\pi}{m+1}$. For every $q\in
\{0,\ldots,m\}$ we have $\theta_q\in (0,\pi)$, and the sign of
$P_m^k\left(2\sqrt{k-1}\cos\theta_q\right)$ is equal to the sign of
\begin{multline*}
(k-1)\sin((m+1)\theta_q)-\sin((m-1)\theta_q)\\=(-1)^q(k-1)-
\sin\left(\frac{m-1}{m+1}\left(\frac{\pi}{2}+q\pi\right)\right).
\end{multline*}
Thus for every $q\in \{0,\ldots,m\}$ the value
$P_m^k\left(2\sqrt{k-1}\cos\theta_q\right)$ is positive if $q$ is
even, and negative if $q$ is odd. It follows that $P_m^k$ must have
a zero in each of the $m$ intervals
$\left\{\left[2\sqrt{k-1}\cos\theta_q,2\sqrt{k-1}\cos\theta_{q+1}\right]\right\}_{q=0}^{m-1}$.
Since $P_m^k$ is a polynomial of degree $m$, we deduce that the
zeros of $P_m^k$ are contained in the interval
$\left[-2\sqrt{k-1},2\sqrt{k-1}\right]$. In particular, if $m$ is
even, since $P_m^k(x)$ is an even function which tends to $\infty$
as $x\to \infty$, it can take negative values only in the interval
$\left[-2\sqrt{k-1},2\sqrt{k-1}\right]$. See
Figure~\ref{fig:geronimus} for the case $k=3$, $m=8$.

\begin{figure}[ht]
\centering \fbox{
\begin{minipage}{4.7in}
\centering
\includegraphics[scale=0.47]{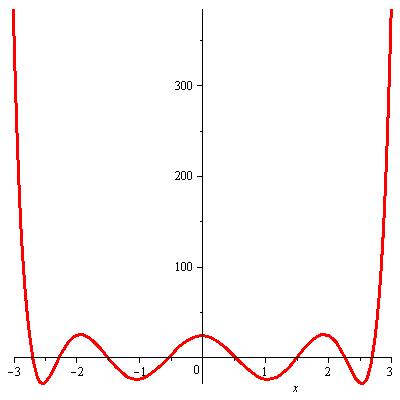}
 \caption{A plot of the polynomial $$P_8^3(x)=x^8-15x^6+70x^4-104x^2+24.$$} \label{fig:geronimus}
  \end{minipage}
}
\end{figure}

It follows from the above discussion, combined with~\eqref{eq:negative part} and~\eqref{eq:trigonometry}, that for every even integer $0<m<\frac{g}{2}$ we have

\begin{eqnarray}\label{eq:lambda_m bound}
&&\!\!\!\!\!\!\!\!\!\!\nonumber\lambda_n\left(A_{G^{(m)}}\right)\\&\ge&\nonumber (k-1)^{\frac{m}{2}-1}\min_{\theta\in [-\pi,\pi]}\frac{(k-1)\sin((m+1)\theta)-\sin((m-1)\theta)}{\sin \theta}\\ \nonumber &=& (k-1)^{\frac{m}{2}-1}\min_{\theta\in [-\pi,\pi]}\left((k-1)e^{-m\theta i}\sum_{r=0}^m e^{2\theta r i}-e^{-(m-2)\theta i} \sum_{r=0}^{m-2}e^{2\theta r i}\right)\\ \nonumber
&\ge& -(k-1)^{\frac{m}{2}-1}\left((k-1)(m+1)+m-1\right)\\&\ge& -(k-1)^{\frac{m}{2}-1} k(m+1).
\end{eqnarray}
Since the degree of $G^{(m)}$ is $k(k-1)^{m-1}$, the following corollary is a combination of~\eqref{eq:self mixing} and~\eqref{eq:lambda_m bound}.

\begin{corollary}\label{coro:self mixing}
Let $G=(V,E)$ be a $k$-regular graph with girth $g$. Then for all
even integers $0< m<\frac{g}{2}$ and for all $S\subseteq V$, the
average degree in the graph induced by $G^{(m)}$ on $S$ satisfies
$$
\frac{2E_{G^{(m)}}(S)}{|S|}\ge \frac{|S|}{n}k(k-1)^{m-1}-(k-1)^{\frac{m}{2}-1} k(m+1).
$$
In particular, if
\begin{equation}\label{eq:S condiiton}
\frac{|S|}{n}\ge \frac{2m+2}{(k-1)^{\frac{m}{2}}},
\end{equation}
then
\begin{equation}\label{eq:easier to read}
\frac{2E_{G^{(m)}}(S)}{|S|}\ge k(k-1)^{m-1}\frac{|S|}{2n}.
\end{equation}
\end{corollary}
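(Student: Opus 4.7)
The plan is to apply the self-mixing inequality \eqref{eq:self mixing} to the distance-$m$ graph $G^{(m)}$ and then bound its smallest eigenvalue via \eqref{eq:lambda_m bound}. First I would record that since $G$ is $k$-regular with girth $g$ and $m < g/2$, the ball of radius $m$ around any vertex is isometric to a truncated $k$-regular tree, so each vertex of $G$ has exactly $k(k-1)^{m-1}$ vertices at distance exactly $m$; hence $G^{(m)}$ is a $k(k-1)^{m-1}$-regular loop-free graph on the same vertex set $V$ of size $n$.

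Applying \eqref{eq:self mixing} with $H = G^{(m)}$ and $d = k(k-1)^{m-1}$ then gives
\[
2E_{G^{(m)}}(S) \;\ge\; \frac{k(k-1)^{m-1}|S|^2}{n} + \lambda_n\!\left(A_{G^{(m)}}\right)|S|.
\]
Dividing by $|S|$ and inserting the lower bound $\lambda_n(A_{G^{(m)}}) \ge -(k-1)^{m/2-1}k(m+1)$ from \eqref{eq:lambda_m bound} yields the first asserted inequality
\[
\frac{2E_{G^{(m)}}(S)}{|S|} \;\ge\; \frac{|S|}{n}\,k(k-1)^{m-1} - (k-1)^{\frac{m}{2}-1}k(m+1).
\]

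For the second assertion, under hypothesis \eqref{eq:S condiiton} one has $|S|/n \ge 2(m+1)/(k-1)^{m/2}$, so
\[
\frac{|S|}{2n}\,k(k-1)^{m-1} \;\ge\; \frac{m+1}{(k-1)^{m/2}}\,k(k-1)^{m-1} \;=\; (k-1)^{\frac{m}{2}-1}k(m+1),
\]
meaning the subtracted term is at most half of the main term. Subtracting it from $(|S|/n)k(k-1)^{m-1}$ leaves at least $(|S|/2n)k(k-1)^{m-1}$, which is precisely \eqref{eq:easier to read}.

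No step here should present a real obstacle: the regularity count for $G^{(m)}$ uses only the girth hypothesis, the self-mixing bound \eqref{eq:self mixing} and eigenvalue bound \eqref{eq:lambda_m bound} are already established, and the passage from the first inequality to \eqref{eq:easier to read} is a direct arithmetic comparison under \eqref{eq:S condiiton}. The only thing to double-check is the parity/range restriction $0 < m < g/2$ even, which is exactly the regime in which both \eqref{eq:geon G} and the negativity analysis of $P_m^k$ were carried out.
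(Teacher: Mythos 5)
Your proposal is correct and matches the paper's proof exactly: the paper states Corollary~\ref{coro:self mixing} as ``a combination of \eqref{eq:self mixing} and \eqref{eq:lambda_m bound}'' after observing that $G^{(m)}$ is $k(k-1)^{m-1}$-regular, and your writeup simply spells out that same combination along with the straightforward arithmetic check that condition \eqref{eq:S condiiton} makes the error term at most half the main term.
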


\subsubsection{Completion of the proof of~\eqref{eq:ramsey
girth}}\label{sec:m-jump}

Corollary~\ref{coro:self mixing}, in combination with
Lemma~\ref{lem:not regular}, suggests that we should consider the
stationary reversible random walk on the graph induced by $G^{(m)}$
on $S$. We will indeed do so, and by judiciously choosing $m$,
\eqref{eq:ramsey girth} will follow.

For each $v\in S$ we denote by $\deg_{G^{(m)}[S]}(v)$ its degree in
the graph induced by $G^{(m)}$ on $S$, i.e., the number of vertices
$u\in S$ that are at distance $m$ from $v$, where the distance is
measured according to the original shortest path metric on $G$. As
in~\eqref{eq:def pi}, for $v\in S$ we write
\begin{equation}\label{eq:def pi girth}
\pi(v)=\frac{\deg_{G^{(m)}[S]}(v)}{2E_{G^{(m)}}(S)}.
\end{equation}
Let
$\{Z_t\}_{t=0}^\infty$ be the following Markov chain on $S$: $Z_0$
is distributed according to $\pi$, and $Z_{t+1}$ is distributed
uniformly on the $\deg_{G^{(m)}[S]}(Z_t)$ vertices of $S$ at
distance $m$ from $Z_t$ (note that $\deg_{G^{(m)}[S]}(Z_t)>0$, since
$Z_t$ is distributed only on those $v\in S$ for which $\pi(v)>0$).

At time $t\in \N$ we clearly have $d_G(Z_0,Z_t)\le tm$. In order to remain in the local ``tree range",
we will therefore impose the assumption
\begin{equation}\label{eq:tree range assumption}
tm<\frac{g}{4}.
\end{equation}
Assume from now on that $m$ is divisible by $6$. We first observe that for $t$ as
in~\eqref{eq:tree range assumption}, the number of neighbors $w\in V$ of $Z_{t-1}$
in the graph $G^{(m)}$ which satisfy $d_G(w,Z_{0})<d_G(Z_0,Z_{t-1})+\frac{m}{3}$
is at most $(k-1)^{\frac{2m}{3}-1}$. Indeed, we may assume that $d_G(Z_0,Z_{t-1})>\frac{m}{3}$,
 since otherwise for any such $w$ we have
\begin{multline*}
d_G(w,Z_0)\ge
d_G(w,Z_{t-1})-d_G(Z_0,Z_{t-1})\\=m-d_G(Z_0,Z_{t-1})\ge
d_G(Z_0,Z_{t-1})+\frac{m}{3}.
\end{multline*}
 So, assuming
$d_G(Z_0,Z_{t-1})>\frac{m}{3}$ and
$d_G(w,Z_{0})<d_G(Z_0,Z_{t-1})+\frac{m}{3}$, let $v$ be the point on
the unique path joining $Z_0$ and $Z_{t-1}$ such that
$d_G(v,Z_{t-1})=\frac{m}{3}+1$. The path in $G$ (whose length is
$m$) joining $w$ and $Z_{t-1}$ must pass through $v$. See
Figure~\ref{fig:fork} for an explanation of this simple fact. Note
that $d_G(w,v)=\frac{2m}{3}-1$, and hence the number of such $w$ is
at most $(k-1)^{\frac{2m}{3}-1}$.
\begin{figure}[ht]
\centering \fbox{
\begin{minipage}{4.7in}
\centering
\includegraphics[scale=0.57]{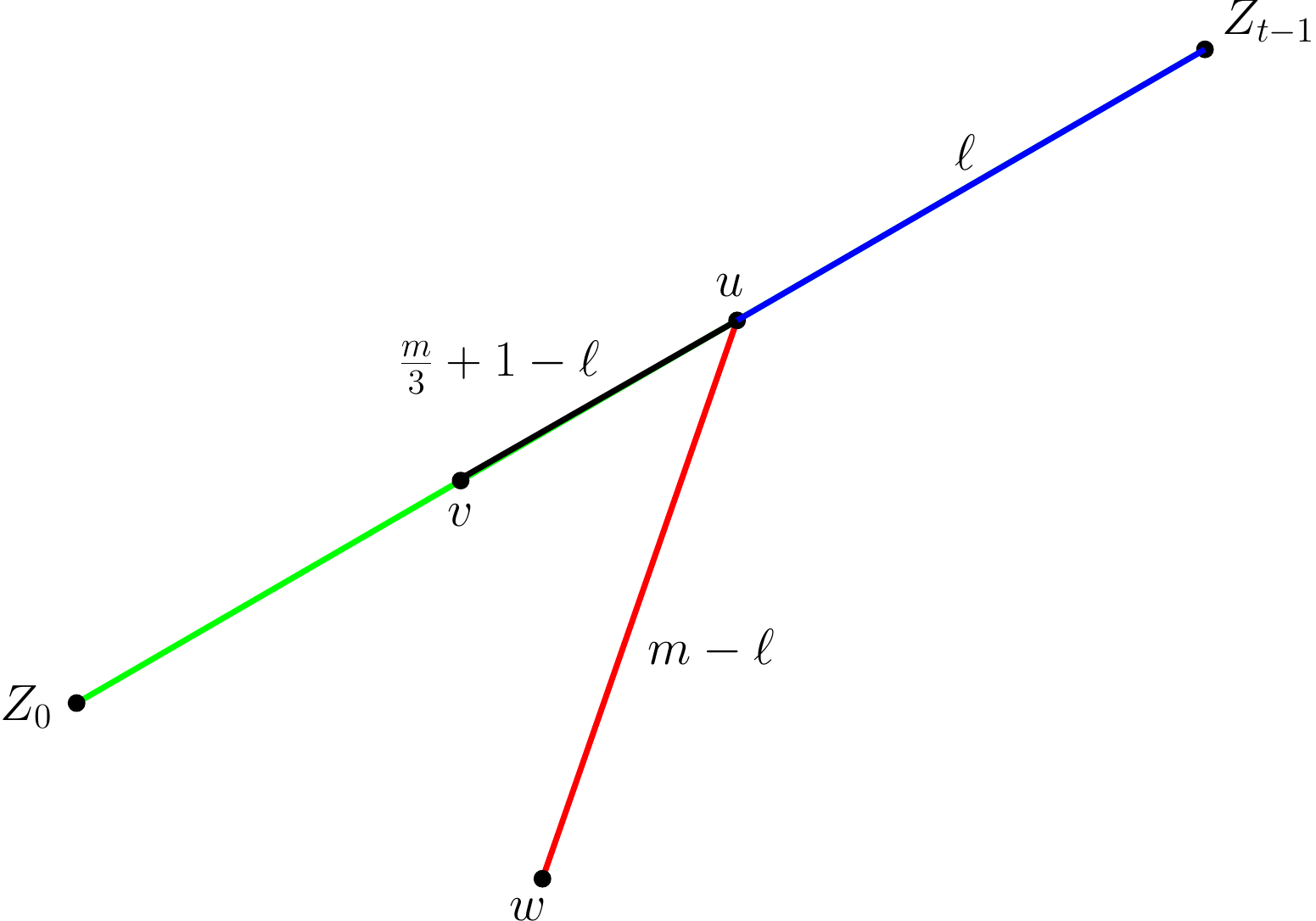}
 \caption{If the path joining $w$ and $Z_{t-1}$ does not pass
 through $v$ then it must touch the path joining $Z_0$ and $Z_{t-1}$ at a vertex $u$,
 as depicted above. Denoting $\ell= d_G(u,Z_{t-1})$, we have $\ell\le \frac{m}{3}$.
 Hence, since $d_{G}(w,Z_0)=\left(d_G(Z_0,Z_{t-1})-\ell\right)+(m-\ell)$, we have $ d_{G}(w,Z_0)\ge d_G(Z_0,Z_{t-1})+\frac{m}{3}.$}  \label{fig:fork}
 \end{minipage}
}
\end{figure}

Let $N(Z_{t-1})$ denote the number of $w\in S$ with
$d_{G}(w,Z_{t-1})=m$ and
$d_G(w,Z_{0})<d_G(Z_0,Z_{t-1})+\frac{m}{3}$. Then,
\begin{eqnarray}\label{eq:N}
\nonumber&&\!\!\!\!\!\!\!\!\!\!\!\!\!\!\!\!\!\!\!\!\!\!\!\E\left[d_G(Z_0,Z_{t})\right]\\&\ge&\nonumber
\E\left[\frac{\deg_{G^{(m)}[S]}(Z_{t-1})-N(Z_{t-1})}{\deg_{G^{(m)}[S]}(Z_{t-1})}
\left(d_G(Z_0,Z_{t-1})+\frac{m}{3}\right)\right.\\&\phantom{\le}&\quad\left.+
\frac{N(Z_{t-1})}{\deg_{G^{(m)}[S]}(Z_{t-1})}
\left(d_G(Z_0,Z_{t-1})-m\right)\right]\nonumber\\
&=& \E\left[d_G(Z_0,Z_{t-1})\right]+\frac{m}{3}-\frac{4m}{3}
\E\left[\frac{N(Z_{t-1})}{\deg_{G^{(m)}[S]}(Z_{t-1})}\right].
\end{eqnarray}
We  will estimate the last term appearing in~\eqref{eq:N} via the
point-wise bound $N(Z_{t-1})\le  (k-1)^{\frac{2m}{3}-1}$ that we
proved above, together with~\eqref{eq:easier to read}, for which we
need to assume~\eqref{eq:S condiiton}.
\begin{multline}\label{eq:use stationary}
\E\left[\frac{N(Z_{t-1})}{\deg_{G^{(m)}[S]}(Z_{t-1})}\right]\le
(k-1)^{\frac{2m}{3}-1}\sum_{\substack{v\in S\\
\deg_{G^{(m)}[S]}(v)>0}}\frac{\pi(v)}{\deg_{G^{(m)}[S]}(v)}\\\stackrel{\eqref{eq:def pi girth}}{\le}(k-1)^{\frac{2m}{3}-1}\frac{|S|}{2E_{G^{(m)}}(S)}
\stackrel{\eqref{eq:easier to
read}}{\le}\frac{1}{k(k-1)^{\frac{m}{3}}}\cdot\frac{2n}{|S|}.
\end{multline}
By combining~\eqref{eq:N} and~\eqref{eq:use stationary} we get the bound
\begin{multline}\label{eq:increment}
\E\left[d_G(Z_0,Z_{t})\right]\ge
\E\left[d_G(Z_0,Z_{t-1})\right]+\frac{m}{3}-\frac{8mn}{3k(k-1)^{\frac{m}{3}}|S|}\\\ge
\E\left[d_G(Z_0,Z_{t-1})\right]+\frac{m}{6},
\end{multline}
provided that
\begin{equation}\label{eq:second condition on m}
\frac{|S|}{n}\ge \frac{16}{k(k-1)^{\frac{m}{3}}}.
\end{equation}
We can ensure that our restrictions on $m$, namely~\eqref{eq:S
condiiton} and~\eqref{eq:second condition on m}, are satisfied for
some  $m\asymp 1+\log_k\left(n/|S|\right)$ that is divisible by $6$.
For such a value of $m$, we know that~\eqref{eq:increment} is valid
as long as $t$ satisfies~\eqref{eq:tree range assumption}. Thus, by
iterating~\eqref{eq:increment} we see that for some $t\asymp g/m$ we
have
\begin{equation}\label{eq:lower m step}
\E\left[d_G(Z_0,Z_{t})^2\right]\ge \left(\E\left[d_G(Z_0,Z_{t})\right]\right)^2\gtrsim (tm)^2\gtrsim g^2.
\end{equation}
If $f:S\to \ell_2$ satisfies
\begin{equation}\label{eq:ramsey bi lip assumption}
d_G(x,y)\le \|f(x)-f(y)\|_2\le Dd_G(x,y)
\end{equation}
for all $x,y\in S$, then it follows from the Markov type $2$
property of Hilbert space that
\begin{multline*}
g^2\stackrel{\eqref{eq:lower m step}}{\lesssim}
\E\left[d_G(Z_0,Z_{t})^2\right] \stackrel{\eqref{eq:ramsey bi lip
assumption}\wedge \eqref{eq:def Mtype}}{\le}
t\E\left[\|f(Z_1)-f(Z_0)\|_2^2\right]\\\stackrel{\eqref{eq:ramsey bi
lip assumption}}{\le}
tD^2\E\left[d_G(Z_0,Z_{1})^2\right]=D^2tm^2\asymp
D^2g\left(1+\log_k\left(\frac{n}{|S|}\right)\right).
\end{multline*}
This completes the proof of~\eqref{eq:ramsey girth}.\qed


\subsubsection{Discrete groups}\label{sec:compression}

Let $G$ be an infinite group which is generated by a finite
symmetric subset $S=S^{-1}\subseteq G$. Let $d_S$ denote the left
invariant word metric induced by $S$ on $G$, i.e., $d_S(x,y)$ is the
smallest integer $k\ge 0$ such that there exist $s_1,\ldots,s_k\in S$
with $x^{-1}y=s_1 s_2\cdots s_k$. It has long been established that
it is fruitful to study finitely generated groups as geometric
objects, i.e., as metric spaces when equipped with a word metric
(see~\cite{Gro93,delaHarpe00,CCJJV01} and the references therein for
an indication of the large amount of literature on this topic). Here
we will describe the role of Markov type in this context.

Assume that the metric space $(G,d_S)$ does not admit a bi-Lipschitz
embedding into Hilbert space, i.e., $c_{\ell_2}(G,d_S)=\infty$.
Based on the experience of researchers thus far, this assumption is
not restrictive: it is conjectured in~\cite{dCTV07} that if
$(G,d_S)$ does admit a bi-Lipschitz embedding into Hilbert space
then $G$ has an Abelian subgroup of finite index. Fix a mapping
$f:G\to \ell_2$. Note that if $f$ is not a Lipschitz function then
the mapping $x\mapsto \max_{s\in S}\|f(xs)-f(x)\|_2$ must be
unbounded on $G$. If we consider only mappings $f$ which have
bounded displacement on edges of the Cayley graph induced by $S$ on
$G$, then the fact that $c_{\ell_2}(G,d_S)=\infty$ must mean that if
we set $\omega_f(x)=\inf_{d_S(x,y)\ge t}\|f(x)-f(y)\|_2$ then
$\omega_f(t)=o(t)$ as $t\to \infty$. To see this consider the
mapping $\psi:G\to \ell_2\oplus \ell_2(G)\cong \ell_2$ given by
$\psi(x)=f(x)\oplus\delta_x$. The fact $\psi$ has infinite
distortion implies that $f$ must asymptotically compress arbitrarily
large distances in $G$.

The modulus $\omega_f(t)$ is called the compression function of $f$.
If we manage to show that for any $f:G\to \ell_2$ the rate at which
$\omega_f(t)/t$ tends to zero must be ``fast", then we might deduce
valuable structural information on the group $G$. This general
approach (including the terminology that we are using) is due to
Gromov (see Section 7.E in~\cite{Gro93}). Here we will study a
further refinement of this idea, which will yield a numerical
invariant of infinite groups called the compression exponent. This
elegant definition is due to Guentner and Kaminker~\cite{GK04}, and
it was extensively studied in recent years (see the introduction
to~\cite{NP11} for background and references). We will focus here on
the use of random walk techniques in the
 task of computing (or estimating) this invariant.

The Guentner-Kaminker definition is simple to state. Given a metric
space $(Y,d_Y)$, the $Y$-compression exponent of $G$, denoted
$\alpha^*_Y(G)$, is the supremum of those $\alpha\ge 0$ for which
there exists a Lipschitz function $f:G\to Y$ which satisfies
$d_Y(f(x),f(y))\gtrsim d_S(x,y)^\alpha$  for all $x,y\in X$. We
remark that in the notation $\alpha^*_Y(G)$ we dropped the explicit
reference to the generating set $S$. This is legitimate since, if we
switch to a different finite symmetric generating set $S'\subseteq
G$, then the resulting word metric $d_{S'}$ is bi-Lipschitz
equivalent to the original word metric $d_S$, and therefore the
$Y$-compression exponents of $S$ and $S'$ coincide. In other words,
the number $\alpha_Y^*(G)\in [0,1]$ is a true algebraic invariant of
the group $G$, which does not depend on the particular choice of a
finite symmetric set of generators. The parameter
$\alpha_{\ell_2}^*(G)$ is called the Hilbert compression exponent of
$G$. It was shown in~\cite{ADS06} that any $\alpha\in [0,1]$ is the
Hilbert compression exponent of some finitely generated group $G$
(see~\cite{Aus11,OO12} for the related question for amenable
groups). Nevertheless, there are relatively few concrete examples of
groups $G$ for which $\alpha_{\ell_2}^*(G)$ (and
$\alpha_{\ell_p}^*(G)$) has been computed. We will demonstrate how
Markov type is relevant to the problem of estimating
$\alpha_Y^*(G)$. This approach was introduced in~\cite{ANP09}, and
further refined in~\cite{NP08,NP11}.

We will examine the applicability of random walks to the computation
of compression exponents of discrete groups via an illustrative
example: the wreath product of the group of integers $\Z$ with
itself. Before doing so, we recall for the sake of completeness the
definition of the wreath product of two general groups $G, H$.
Readers who are not accustomed to this concept are encouraged to
focus on the case $G=H=\Z$, as it contains the essential ideas that
we wish to convey.

Let $G,H$ be groups which are generated by the finite symmetric sets
$S_G\subseteq G$, $S_H\subseteq H$. We denote by $e_G,e_H$ the
identity elements of $G,H$, respectively. We also denote by
$e_{G^H}$ the function from $H$ to $G$ which takes the value $e_G$
at all points $x\in H$. The (restricted) wreath product of $G$ with
$H$, denoted $G\bwr H$, is defined as the group of all pairs $(f,x)$
where $f:H\to G$ has finite support (i.e., $f(z)= e_{G^H}(z)=e_G$
for all but finitely many $z\in H$) and $x\in H$, equipped with the
product $$ (f,x)(g,y)= \left(z\mapsto f(z)g(x^{-1}z),xy\right).$$
$G\bwr H$ is generated by the set $\{(e_{G^H},x):\ x\in S_H\}\cup
\{(\delta_y,e_H):\ y\in S_G\}$, where $\delta_y:H\to G$ is the
function which takes the value $y$ at $e_H$ and the value $e_G$ on
$H\setminus\{e_H\}$.

When $G=C_{2}=\{0,1\}$, the cyclic group of order $2$, then the
group $C_2\bwr H$ is often called the lamplighter group on $H$. In
this case imagine that at every site $x\in H$ there is a lamp,
which can either be on or off. An element $(f,x)\in C_{2}\bwr H$ can
be thought of as indicating that a ``lamplighter" is located at
$x\in H$, and $f$ represents the locations of those (finitely many)
lamps which are on (these locations are the sites $y\in H$ where
$f(y)=1$). The distance in $C_2\bwr H$ between $(f,x)$ and $(g,y)$
is the minimum number of steps required for the lamplighter to start
at $x$, visit all the sites $z\in H$ for which $f(z)\neq g(z)$,
change $f(z)$ to $g(z)$, and end up at the site $y$. Here, by a
``step" we mean a move from $x$ to $xs$ for some $s\in S_H$, or a
change of the state of the lamp (from on to off or vice versa) at
the current location of the lamplighter. Thus, the distance between
$(f,x)$ and $(g,y)$ is, up to a factor of $2$, the shortest (in the
metric $d_{S_H}$) traveling salesman tour starting at $x$, covering
the symmetric difference of the supports of $f$ and $g$, and
terminating at $y$. For a general group $G$, the description of the
metric on $G\bwr H$ is similar, the only difference being that the
lamps can have $G$ different states (not just on or off), and the
cost of changing the state of a lamp from $a\in G$ to $b\in G$ is
$d_{S_G}(a,b)$. See Figure~\ref{fig:wr} for a schematic description
of the case $G=H=\Z$.

\begin{figure}[ht]\label{fig:wr}
\centering \fbox{
\begin{minipage}{4.7in}
\centering
\includegraphics[scale=0.6]{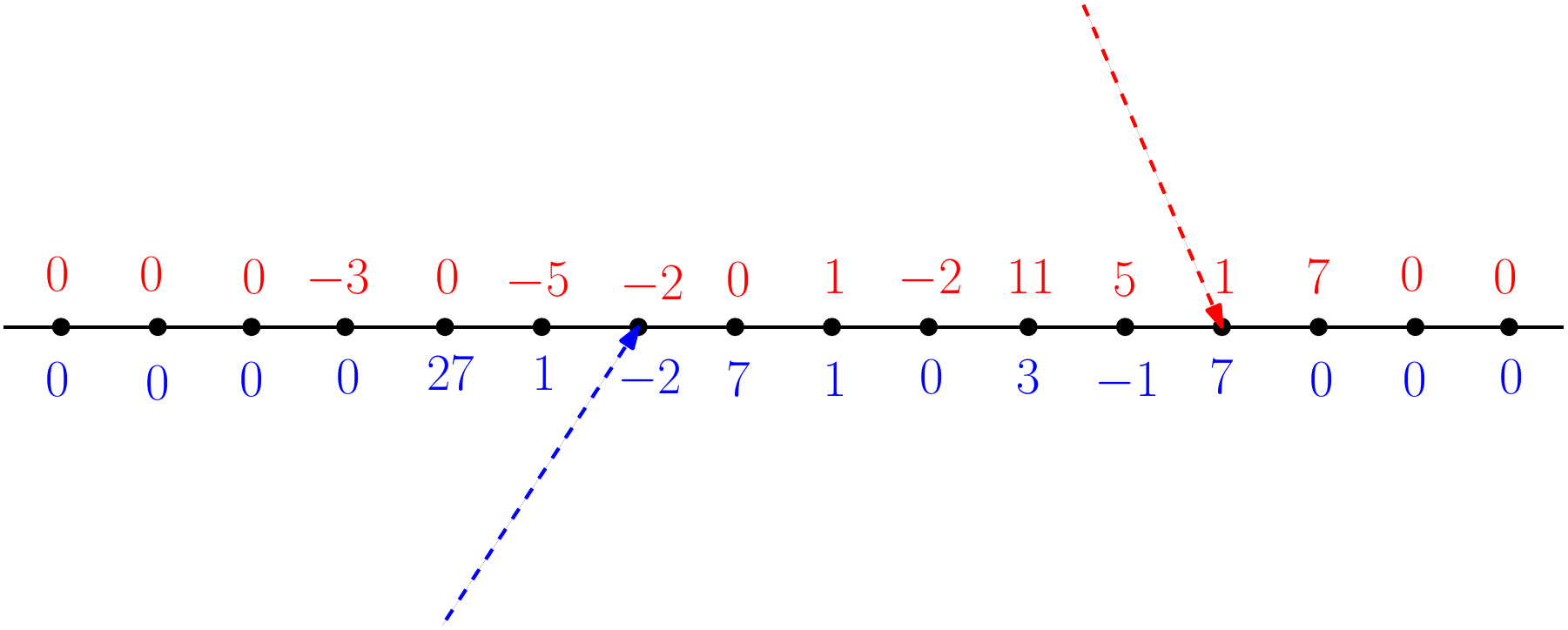}
 \caption{An example of two elements in $\Z\bwr \Z$. The numbers above
 and below the $\Z$-axis represent two $\Z$-valued finitely supported functions.
 The arrows indicate the location of the lamplighter in each of the corresponding
 elements of $\Z\bwr \Z$. In order to compute the distance between these elements,
 the lamplighter of the top configuration must visit  the locations where the top
 values differ from the bottom values, and at these locations the top value must
 be changed to the bottom value. At the end of the process, the lamplighter must
 end up at the location indicated by the bottom arrow. Each movement of the
 lamplighter to a neighboring integer adds a unit cost to this process, and
 an increment or decrement of $1$ to the value at the location of the lamplighter
 also incurs a unit cost. The distance in $\Z\bwr \Z$ is the minimum cost of such
 a process which transforms the top configuration to the bottom configuration.
 Thus, in the above example, the top lamplighter will first move one step to the right,
 incurring a unit cost, change the $7$ in the top row to $0$, incurring a cost of $7$ units,
 take one step to the left (incurring a unit cost), change the $1$ in the top row to a $7$
 (incurring a cost of $6$ units), and so on.}
  \end{minipage}
}
\end{figure}

We shall now describe an argument using random walks, showing that
$\alpha_{\ell_2}^*(\Z\bwr \Z)\le \frac23$. This approach is due
to~\cite{ANP09}. In fact, as shown in~\cite{NP08},
$\alpha_{\ell_2}^*(\Z\bwr \Z)\ge \frac23$, and therefore the
argument below is sharp, and yields the exact computation
$\alpha_{\ell_2}^*(\Z\bwr \Z)= \frac23$. More generally, it is shown
in~\cite{NP11} that for every $p\in [1,2]$ we have
\begin{equation}\label{eq:alphap}
\alpha_{\ell_p}^*(\Z\bwr\Z)=\frac{p}{2p-1}.
\end{equation}
The proof of~\eqref{eq:alphap} when $p\neq 2$ requires an additional
idea that we will not work out in detail here: instead of examining
the standard random walk on $\Z\bwr \Z$ one studies a discrete
version of a $q$-stable random walk  for every $q\in (p,2]$. This
yields a new twist of the Markov type method: it is beneficial to
adapt the random walk to the geometry of the target space, and  to
use random walks with unbounded increments (though, we have already
seen the latter occur in Section~\ref{sec:m-jump}). We refer
to~\cite{NP08,NP11} for more general results that go beyond that
case of $\Z\bwr \Z$, as well as an explanation of the background,
history, and applications of these types of problems. It suffices to
say here that we chose to focus on the group $\Z\bwr \Z$ since
before the introduction of random walk techniques, it was the
simplest concrete group which resisted the attempts to compute its
$\ell_p$ compression exponents.

Consider the standard random walk $\{W_t\}_{t=0}^\infty$ on $\Z \bwr
\Z$, starting at the identity element. Namely, we start at
$e_{\Z\bwr\Z}$, i.e., the configuration corresponding to all the
lamps being turned off, and the lamplighter being at $0$. At each
step a fair coin is tossed, and depending on the outcome of the coin
toss, either the lamplighter moves to one of its two neighboring
locations uniformly at random, or the value at the current location
of the lamplighter is changed by $+1$ or $-1$ uniformly at random.

After $t$ steps, we expect that a constant fraction of the coin
tosses resulted in a movement of the lamplighter, which is just a
standard random walk on the integers $\Z$. Thus, at time $t$ we
expect the lamplighter to be located at $\pm\asymp  \sqrt{t}$. One
might also expect that during the walk the lamplighter spent roughly
(up to constant factors) the same amount of total time at a definite
fraction of the sites between $0$ and its location at time $t$.
There are $\asymp \sqrt{t}$ such sites, and therefore, if this
intuition is indeed correct, we expect the time spent at each of
these sites to be $\asymp \frac{t}{\sqrt{t}}=\sqrt{t}$. At each such
site the value of the lamp is also the result of a random walk on
$\Z$, and therefore at time $t$ we expect $W_t$ to have $\asymp
\sqrt{t}$ sites at which the value of the lamp is $\pm\asymp
\sqrt{\sqrt{t}}=\pm\sqrt[4]{t}$. This heuristic argument suggests
that
\begin{equation}\label{eq:erschler}
\E\left[d_{\Z\bwr\Z}\left(W_t,e_{\Z\bwr\Z}\right)\right]\gtrsim \sqrt{t}\cdot\sqrt[4]{t}=
t^{\frac34}.
\end{equation}
These considerations can indeed be made to yield a rigorous proof
 of~\eqref{eq:erschler}; see~\cite{Ersh01} and also~\cite{Rev03}, as well as Section 6
in~\cite{NP08} for an extension to the case of general wreath
products.

The fact that $\ell_2$ has Markov type $2$ suggests that if
$f:\Z\bwr \Z\to \ell_2$ satisfies $d_{\Z\bwr\Z}(x,y)^\alpha \lesssim
\|f(x)-f(y)\|_2\lesssim d_{\Z\bwr\Z}(x,y)$, then
$$\E\left[\|f(W_t)-f(W_0)\|_2^2\right]\lesssim t,$$ yet due
to~\eqref{eq:erschler},
$$\E\left[\|f(W_t)-f(W_0)\|_2^2\right]\gtrsim t^{2\alpha\cdot
\frac34}.$$ This implies that $\alpha\le \frac23$, as required. But,
this argument is flawed: we are only allowed to use the Markov type
$2$ inequality~\eqref{eq:def Mtype} for stationary reversible Markov
chains. The Markov chain $\{W_t\}_{t=0}^\infty$ starts at the
deterministic point $e_{\Z\bwr\Z}$ rather than at a point chosen
uniformly at random over $\Z\bwr \Z$. Of course, since $\Z\bwr \Z$
is an infinite set, there is no way to make $W_0$ be uniformly
distributed over it. The above argument can be salvaged by either
considering instead an appropriately truncated random walk starting
at a uniformly chosen point from a large enough F\o lner set of
$\Z\bwr \Z$, or by applying an argument of Aharoni, Maurey and
Mityagin~\cite{AhaMauMit} and Gromov~\cite{dCTV07} (see
also~\cite{NP11}) to reduce the problem to equivariant embeddings,
and then to prove that the Markov type inequality does hold true for
images of the random walk $\{W_t\}_{t=0}^\infty$ (starting at
$e_{\Z\bwr\Z}$) under equivariant mappings. See~\cite{ANP09} for the
former approach and~\cite{NP08} for the latter approach.


\bibliographystyle{abbrv}
\bibliography{ribe}
\end{document}

\begin{remark}\label{rem:history}
{\em Despite the fact that it was first formulated by Bourgain, the
Ribe program is called this way because it is inspired by Ribe's
rigidity theorem. I do not know the exact origin of this name.
In~\cite{Bou86} Bourgain explains the program and its motivation
from Ribe's theorem, describes the basic ``dictionary" that relates
Banach space concepts to metric space concepts, presents examples of
natural steps of the program, raises some open questions, and proves
his metric characterization of isomorphic uniform convexity as the
first successful completion of a step in the program. Bourgain also
writes in~\cite{Bou86} that ``A detailed exposition of this program
will appear in J. Lindenstrauss's forthcoming survey paper [5]."
Reference [5] in~\cite{Bou86} is cited as ``J. Lindenstrauss, {\em
Topics in the geometry of metric spaces}, to appear."  Probably
referring
 to the same unpublished survey,
in~\cite{Bou85} Bourgain also discusses the Ribe program and writes
``We refer the reader to the survey of J. Lindenstrauss [4] for a
detailed exposition of this theme", where reference [4]
of~\cite{Bou85} is ``J. Lindenstrauss, {\em Proceedings Missouri
Conf., Missouri -- Columbia (1984)}, to appear." Unfortunately,
Lindenstrauss' paper was never published. It seems reasonable to
believe that following Ribe's theorem the general idea of the
program occurred to several researchers, and it was natural for
Bourgain to be the first to describe it in writing due to his
ground-breaking result. Nevertheless, it is clear from these
references that Lindenstrauss played an important role here.}
\end{remark}